\newtheorem{theorem}{Theorem}[section]
\newtheorem{lemma}[theorem]{Lemma}
\newtheorem{proposition}[theorem]{Proposition}
\newtheorem{corollary}[theorem]{Corollary}
\numberwithin{equation}{section}
\newcommand{\el}[1]{\rule[#1]{0pt}{0pt}} 
\def\CC{\mathscr{C}}
\def\esup{\text{ess\,sup}}
\def\id{\text{id}}
\def\NN{\mathbb{N}}
\def\RR{\mathbb{R}}
\newcommand{\LL}[1]{L^{#1}(\Omega)}
\newcommand{\LLL}[1]{L^{#1}(\Omega,drdz)}
\newcommand{\woo}[1]{\|#1\omega_0\|_{L^1(\Omega)}}
\newcommand{\woorz}[1]{\|#1\omega_0(r,z)\|_{L^1(\Omega)}}
\newcommand{\woop}[2]{\|#1\omega_0\|_{L^{#2}(\Omega)}}
\newcommand{\wws}[1]{\|#1\omega_\theta(s)\|_{L^1(\Omega)}}
\newcommand{\wwsp}[2]{\|#1\omega_\theta(s)\|_{L^{#2}(\Omega)}}
\newcommand{\wwt}[1]{\|#1\omega_\theta(t)\|_{L^1(\Omega)}}
\newcommand{\wwtrz}[1]{\|#1\omega_\theta(t,r,z)\|_{L^1(\Omega)}}
\newcommand{\wwtp}[2]{\|#1\omega_\theta(t)\|_{L^{#2}(\Omega)}}
\newcommand{\wwtt}[2]{\|#1\omega_\theta(#2)\|_{L^1(\Omega)}}
\newcommand{\wwttp}[3]{\|#1\omega_\theta(#2)\|_{L^{#3}(\Omega)}}
\newcommand{\di}{\textup{div}}
\newcommand{\dii}{\textup{div}_*}
\newcommand{\uus}{\|u(s)\|_{L^\infty(\Omega)}}
\newcommand{\uut}{\|u(t)\|_{L^\infty(\Omega)}}
\newcommand{\rot}{\textup{rot}}
\newcommand{\w}{\omega_\theta}
\newcommand{\wo}{\omega_0}
\begin{document}

\author{Quentin \textsc{Vila}\footnote{
Institut Fourier (CNRS), UMR 5582; Univesité Grenoble Alpes, CS40700, 38058 Grenoble cedex 09, France} 
}
\title{ \LARGE \bf
Time-Asymptotic Study of a Viscous Axisymmetric Fluid without Swirl
}
\date{{\normalsize the 20$^{th}$ May 2022}}


\maketitle

\begin{center}
{\bf Abstract.}
\end{center}

We study the long-time behaviour of axisymmetric solutions without swirl for the three-dimensional Navier-Stokes equations in the whole space. Assuming that the initial vorticity is sufficiently localised, we compute explicitly the leading terms in the asymptotic expansion of the solution, both for the vorticity and the velocity field. In particular, we identify optimal temporal decay rates depending on the spatial localisation of the initial data. Our approach relies on accurate $L^p$-$L^q$ estimates for the linearised evolution equation and its Taylor expansion in self-similar variables. 

\paragraph{Keywords:} fluid mechanics, incompressible Navier-Stokes equations, vorticity equation, long-time behaviour, asymptotic expansion, axisymmetry, axisymmetry without swirl.


\section{Introduction}

In absence of nonconservative forces, the Navier-Stokes equations for homogeneous and incompressible fluids can be expressed in dimensionless variables as the system
\begin{equation}\label{NSdimensionless}
\left\{\begin{array}{l}
\partial_tu + (u.\nabla)u = -\nabla P + \Delta u\\
\rule[1.3em]{0pt}{0pt}
\di\,u = 0
\end{array}\right.
\end{equation}
whose unknown is the pair $(u,P)$, where $u$ is the (dimensionless) velocity field and $P$ the (dimensionless) pressure field of the fluid. The present paper supposes the studied fluid to be \emph{axisymmetric}, meaning that the fluid is evolving in the three-dimensional real space in such a way that its motion is invariant by rotation around the vertical axis. Considering the cylindrical coordinates system $(r,\theta,z)$ and its associated basis
\[e_r = \begin{pmatrix}\cos\theta\\\sin\theta\\0\end{pmatrix}, \qquad e_\theta = \begin{pmatrix}-\sin\theta\\\cos\theta\\0\end{pmatrix}, \qquad e_z = \begin{pmatrix}0\\0\\1\end{pmatrix},\]
it means that the cylindrical components of the velocity $u_r = u\cdot e_r$, $u_\theta = u\cdot e_\theta$ and $u_z = u\cdot e_z$ are independant of $\theta$: $u_r(t,x) = u_r(t,r,\theta,z) = u_r(t,r,z)$ for all $x = (r\cos\theta,r\sin\theta,z)\in\RR^3$ and likewise for $u_\theta$ and $u_z$. The tangential component of the velocity field $u_\theta$ is named the \emph{swirl}. In this paper we also make the hypothesis that the swirl is identically equal to zero, so
\[
u(t,r,\theta,z) = u_r(t,r,z)e_r + u_z(t,r,z)e_z.
\]
To a certain extent, the axisymmetric fluid without swirl is comparable to a two-dimensional fluid admitting an axial symmetry. 

The first mathematical results in axisymmetric fluid mechanics date back to 1968, when Ladyzhenskaya \cite{LadyzhenUni} and Ukhovskii and Yudovich \cite{UkhovYudo} tackle the uniqueness problem of the three-dimensional Navier-Stokes equations by adding this hypothesis. Comes later the study of Leonardi, M\'alek, Ne\v cas and Pokorn\'y \cite{LMNPaxialflow}, who show the global existence of a zero-swirl axisymmetric solution to \eqref{NSdimensionless} from any such initial data in $H^2(\RR^3)$, then the one of Abidi \cite{Abidi2008} who improved the result for an initial data in $H^{\frac{1}{2}}(\RR^3)$. Follows finally the recent paper of Gallay and \v Sver\'ak \cite{Gallay1}, suggesting to work in some two-dimensional-like $L^p$ spaces equipped with scale-invariant norms. 

The present article examines in detail what can be said about the long-time behaviour of the axisymmetric fluid when the initial vorticity is well localised. Using tools specific to axisymmetry without swirl, we determine the asymptotic expansions of the fluid's vorticity and velocity under essentially optimal hypotheses. The interest for the long-time behaviour of the Navier-Stokes flow in the litterature is substantial, and intensified in the end of the last century. Starting in the 1980's with the papers of Kato \cite{Kato84}, Schonbek \cite{Schonbek85,Schonbek86} and Wiegner \cite{Wiegner87}, a series of important works have addressed the time-decay problem for fluids in $\RR^3$, or $\RR^n$, among which can be cited \cite{Carpio94, Carpio96,MiyakawaSchonbek2001,FujigakiMiyakawa2001,GalWayR3,
Brandolese2004,Brandolese2003,OkaTsu2016,BrandOka2021}. For an extensive overview on the subject, we invite the reader to look at the introduction of the most recent ones \cite{OkaTsu2016,BrandOka2021} and the references therein, or to refer to the chapter by Brandolese and Schonbeck in \cite{GigaNovo2018}.

The hypothesis of axisymmetry without swirl encourages the consideration of the vorticity $\omega$, which is the curl of the velocity $\omega = \rot(u)$, as the core of the problem. Indeed, in this context, the vorticity boils down to a real-valued function $\w$ by writing that $\omega = \w e_\theta$ where $\w = \partial_zu_r - \partial_ru_z$. Given the Navier-Stokes equations \eqref{NSdimensionless} and the zero-swirl axisymmetry hypothesis, $\w$ can be found as the solution of the equation
\begin{equation}\label{voraxi}
\left\{\begin{array}{ll} \displaystyle
\partial_t\w + u.\nabla\w - \frac{u_r}{r}\w = \Delta\w - \frac{\w}{r^2}
& \text{on }]0,+\infty[\times\Omega
\\ 
\w(t,0,z) = 0 & \forall(t,0,z)\in[0,+\infty[\times\partial\Omega
\\ \rule[1.3em]{0pt}{0pt}
\w(0,r,z) = \wo(r,z) & \forall(r,z)\in\Omega
\end{array}\right.
\end{equation}
where $\Omega$ is the open half plane $\Omega =\ ]0,+\infty[\times\RR$ and where $\w=0$ at $r=0$ due to the regularity of $u$. Here, in cylindrical coordinates, $u\cdot\nabla = u_r\partial_r + u_z\partial_z$ and $\Delta = \partial_r^2 + \frac{1}{r}\partial_r + \partial_z^2$. One notable advantage of working with the vorticity is that the pressure $P$ is not present any more in the equation, yet no generality is lost thanks to the Biot-Savart law. Deeper discussions about the relevance to work in vorticity can also be found in \cite{GalWayR2,GalWayR3}. 

Following \cite{Gallay1}, our study will be carried out using the scale-invariant norms
\[
\|w\|_{\LL{p}} = \left(\int_\Omega |w(r,z)|^p\,drdz\right)^{\frac{1}{p}} \quad \text{ if }1\leqslant p<\infty \quad ; \quad
\|w\|_{\LL{\infty}} = \underset{\Omega}{\esup}\,|w|,
\]
according to the two-dimensional Lebesgue measure $drdz$ on the half plane $\Omega$ instead of the three-dimensional Lebesgue measure on $\RR^3$. Up to a multiplicative factor, $\|w\|_{\LL{p}}$ is the norm of $(r,z)\mapsto r^{-\frac{1}{p}}w(r,z)$ in the usual $L^p(\RR^3,rdrd\theta dz)$ space. Within this framework, it is known thanks to \cite{Gallay1} that the zero-swirl axisymmetric problem \eqref{voraxi} is well-posed for any initial data $\wo$ in $\LL{1} = \LLL{1}$ and that moreover, unless $\wo$ is trivial, the $L^1$-norm of the solution $\wwt{} = \int_\Omega|\w(t,r,z)|drdz$ is a decreasing function of time and tends to zero as $t$ tends to infinity. We here go even further, naming
\begin{equation}\label{defIJ}
\begin{array}{c}\displaystyle
I(t) = \int_\Omega r^2\,\w(t,r,z)\,drdz \quad\text{and}\quad J(t) = \int_\Omega r^2z\,\w(t,r,z)\,drdz
\end{array}
\end{equation}
then stating the following theorem.

\begin{theorem}\label{theoremintro}
Consider $\w$ the solution of \eqref{voraxi} with initial data $\wo\in\LLL{1}$.\\
If $\woo{r^2}<\infty$ then $I(t)$ is constantly equal to $I_0 = I(0)$ and the relation
\begin{equation}\label{theoremintror2}
\w(t,r,z) = \frac{rI_0}{16\sqrt{\pi}}e^{\mbox{$-\frac{r^2+z^2}{4t}$}}\ t^{-5/2} + \underset{t\to+\infty}{o}\left(t^{\frac{1}{p}-2}\right)
\end{equation}
holds in $\LLL{p}$ for every $p\in[1,+\infty]$.\\
If moreover $\woo{(r^3+r^2|z|+r|z|)}<\infty$, then $J(t)$ has a finite limit $J_\infty$ as $t\to+\infty$ and
\begin{equation}\label{theoremintror3}
\w(t,r,z) = \frac{rI_0}{16\sqrt{\pi}}e^{\mbox{$-\frac{r^2+z^2}{4t}$}}\ t^{-5/2} + \frac{rzJ_\infty}{32\sqrt{\pi}}e^{\mbox{$-\frac{r^2+z^2}{4t}$}}\ t^{-7/2} + \underset{t\to+\infty}{o}\left(t^{\frac{1}{p}-\frac{5}{2}}\right)
\end{equation}
holds in $\LLL{p}$ for every $p\in[1,+\infty]$.
\end{theorem}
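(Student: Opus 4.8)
The plan is to reduce the nonlinear problem to the spectral analysis of the linearised operator in self-similar variables, the nonlinear terms being an exponentially small perturbation in the new time. First I would pass to the variables $\xi=r/\sqrt t$, $\zeta=z/\sqrt t$, $\tau=\log t$ and write $\w(t,r,z)=t^{-2}w(\tau,\xi,\zeta)$. Putting the equation in conservative form $\partial_t\w=\Delta\w-\w/r^2-\dii(u\w)$ (which follows from $\di u=0$), the linear part becomes the Fokker--Planck-type operator
\[
\mathcal{L}w=\Delta_{\xi,\zeta}w-\frac{w}{\xi^2}+\frac{\xi}{2}\partial_\xi w+\frac{\zeta}{2}\partial_\zeta w+2w,
\]
while the nonlinear term, thanks to the Biot--Savart scaling $u=O(t^{-3/2})$ against $\w=O(t^{-2})$, turns into a forcing of size $e^{-\tau}$ in the rescaled equation $\partial_\tau w=\mathcal{L}w+O(e^{-\tau})$. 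A direct computation gives $\mathcal{L}G_0=0$ and $\mathcal{L}G_1=-\tfrac12 G_1$ for the profiles $G_0=\xi\,e^{-(\xi^2+\zeta^2)/4}$ and $G_1=\xi\zeta\,e^{-(\xi^2+\zeta^2)/4}$; these are the two slowest (simple) modes, with a spectral gap of $\tfrac12$ below each, and after undoing the rescaling they account for the $t^{-5/2}$ and $t^{-7/2}$ terms of the statement.

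The coefficients are pinned down by the two moments. I would first prove that $I(t)=\int_\Omega r^2\w\,drdz$ is conserved: integrating the equation against $r^2$, the diffusion and the $-\w/r^2$ term cancel after integration by parts, while the convective part reduces, using $\di u=0$, to $2\int_\Omega ru_r\w\,drdz$, which vanishes because $\w=\partial_zu_r-\partial_ru_z$ gives $\int_\Omega ru_r\w=\tfrac12\int_\Omega r\partial_z(u_r^2)\,drdz+\int_\Omega\partial_r(ru_r)\,u_z\,drdz=0$ (the last integral being $-\tfrac12\int_\Omega r\partial_z(u_z^2)=0$ by incompressibility). Hence $I(t)\equiv I_0$, and the identities $\int_\Omega\xi^2 w\,d\xi d\zeta=I(t)\equiv I_0$ together with $\int_\Omega\xi^2 G_0\,d\xi d\zeta=16\sqrt\pi$ fix the leading coefficient $I_0/(16\sqrt\pi)$. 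For the second term I would compute likewise $J'(t)=\int_\Omega(2rzu_r+r^2u_z)\w\,drdz$, which is \emph{not} zero; the point is that its integrand is controlled by $t^{-3/2}$ once $u=O(t^{-3/2})$ and $\w=O(t^{-2})$ are known in the appropriate weighted norms, so $J'\in L^1(1,\infty)$ and $J(t)\to J_\infty$. Then $\int_\Omega\xi^2\zeta w\,d\xi d\zeta=t^{-1/2}J(t)$ and $\int_\Omega\xi^2\zeta G_1\,d\xi d\zeta=32\sqrt\pi$ fix the second coefficient $J_\infty/(32\sqrt\pi)$.

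With the profiles and coefficients identified, the two expansions follow from the Duhamel formula $\w(t)=S(t)\wo-\int_0^t S(t-s)\dii(u\w)(s)\,ds$, where $S$ is the semigroup of the linearised equation. I would combine the $L^p$--$L^q$ estimates for $S$ with its Taylor expansion $S(t)f=\big(\int_\Omega r'^2 f\,dr'dz'\big)\phi_0(t)+\big(\int_\Omega r'^2 z' f\,dr'dz'\big)\phi_1(t)+(\text{remainder})$, where $\phi_0,\phi_1$ are the normalised rescalings of $G_0,G_1$ and the remainder decays faster by the spectral gap, being controlled through the higher moments of $f$. Applied to $S(t)\wo$ this produces $I_0\phi_0+J_0\phi_1$ plus a faster term; the nonlinear Duhamel integral then adds nothing at the $\phi_0$ level (the conservation of $I$ protects the leading coefficient) and, at the $\phi_1$ level, a finite correction arising from the resonance of the $-\tfrac12$ eigenvalue with the $O(e^{-\tau})$ forcing, which is exactly what updates $J_0$ into $J_\infty$; all remaining contributions decay like $\tau e^{-\tau}$ or faster. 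The hypothesis $\woo{r^2}<\infty$ makes the $\phi_0$-projection meaningful and yields the first expansion with remainder $o(t^{1/p-2})$, while the extra localisation $\woo{(r^3+r^2|z|+r|z|)}<\infty$ propagates the moment $\int_\Omega r'^2z'\,\w$ and the weighted norms of $u\w$, producing the $\phi_1$ term and the $o(t^{1/p-5/2})$ remainder.

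The main obstacle I expect lies in the nonlocal velocity field: closing the bootstrap requires sharp $L^p$--$L^q$ and weighted Biot--Savart estimates showing that $u$ really decays like $t^{-3/2}$ and stays localised enough for the weighted moments to remain finite and for $J'\in L^1(1,\infty)$ to hold. Concretely, I would need to propagate in time the finiteness of $\|r^2\w(t)\|_{L^1(\Omega)}$ and of the higher-weight norms along the flow, to handle the diagonal singularity $s\to t$ and the initial layer $s\to0$ in the Duhamel integral, and above all to verify that the rescaled nonlinearity is genuinely an $O(e^{-\tau})$ perturbation uniformly in these weighted norms; this last verification is the crux on which the whole asymptotic expansion rests.
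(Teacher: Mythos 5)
Your outline follows essentially the same route as the paper: self-similar rescaling, identification of the two Gaussian profiles as the slowest self-similar solutions of the linearised equation, conservation of $I$ protecting the leading coefficient, integrability of $\dot J(t)=\int_\Omega(2rz\,u_r+r^2u_z)\,\w\,drdz$ giving $J_\infty$, and a Duhamel argument combining weighted $L^p$--$L^q$ estimates for $S(t)$ with its first- and second-order Taylor expansions, the vanishing of $\int_\Omega r^2\dii(u\w)$ ensuring that the nonlinearity does not shift the leading coefficient while its $r^2z$-moment updates $J_0$ into $J_\infty$. Two points differ in substance. First, your proof that $\dot I=0$ by substituting $\w=\partial_zu_r-\partial_ru_z$ and integrating by parts with incompressibility is a genuinely different, more elementary argument than the paper's, which instead inserts the Biot--Savart representation of $u_r$ into $\int_\Omega 2r\,u_r\w$ and invokes the antisymmetry of the resulting double integral under exchange of $(r,z)$ and $(\rho,\zeta)$; both work. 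Second, the paper never literally passes to the Fokker--Planck operator in logarithmic time: it works directly with $S(t)$, $S_1(t)$, $S_2(t)$ and their kernels, which is what allows it to quantify the remainders under the exact moment hypotheses of the statement; your claim that the residual contributions decay like $\tau e^{-\tau}$ is stronger than what $\woo{(r^3+r^2|z|+r|z|)}<\infty$ yields (that rate requires the fourth-order moments of proposition \ref{propwr4}), though this overstatement is harmless since the theorem only asserts $o(t^{\frac{1}{p}-\frac{5}{2}})$. Finally, you correctly identify the crux --- propagating the weighted moments of $\w$ along the flow, the weighted Biot--Savart estimates, and the singularities at $s\to0$ and $s\to t$ in the Duhamel integral --- but these constitute the bulk of the paper (lemmas \ref{lemmamajorationsr}, \ref{lemmamajorationsr2}, \ref{lemmamajorationr1alpha+ralphaz} and appendices \ref{annexeprop4.1}, \ref{annexeGronwallameliore}), so at those points the proposal remains a correct plan rather than a proof.
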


\paragraph{Remarks.}
\begin{itemize}[leftmargin=0pt,itemindent=!]
\item[1.] Theorem \ref{theoremintro} is the concatenation of the two central results of this paper, propositions \ref{propwr2} and \ref{propwr3}, and of remark \ref{rkJinfty}.
\item[2.] Expansion \eqref{theoremintror2} already appears in \cite{Gallay1}, but with the additional hypothesis that $\wo$ is nonnegative. Let us note especially that \eqref{theoremintror2} gives for $p=1$ the general estimate $\wwt{} \leqslant C_{\wo}t^{-1}$, where $C_{\wo}$ is a positive constant that depends only on the initial data $\wo$.
\item[3.] Expansion \eqref{theoremintror3} is a new result for zero-swirl axisymmetry. Its terms could have been obtained from the general three-dimensional study developed in \cite{GalWayR3}, but \eqref{theoremintror3} is here deduced from much weaker assumptions thanks to the specific context of zero-swirl axisymmetry. We discuss this point more precisely in appendix \ref{annexecomparaisonThGWay3D}.
\item[4.] 
The technical lemmas that lead to theorem \ref{theoremintro} give more information than just showing the terms of the asymptotic expansion: they provide estimates on the $L^p$-norms of the vorticity for all the intermediate hypotheses on its initial moments. Indeed, 
\begin{itemize}[leftmargin=\parindent,itemindent=!,topsep=0pt] 
\item proposition \ref{propmajorationsralpha} treats the cases where $\wo\in\LL{1}$ is such that $\woo{r^\alpha}<\infty$ with $0\leqslant\alpha\leqslant2$, 
\item proposition \ref{propwr2+alpha} the cases where moreover $\woo{r^\alpha(r^2+r|z|+|z|)}<\infty$, $0<\alpha\leqslant1$,
\item proposition \ref{propwr3+alpha} the cases where $\woo{r^\alpha(r^3+rz^2+z^2)}<\infty$, $0<\alpha<1$,
\item and proposition \ref{propwr4} treats the case where $\woo{(r^4+r^2z^2+rz^2)}<\infty$.
\end{itemize}
These increasingly precise hypotheses on the finite moments of $\wo$ are the right amount of localisation to ask in order to refine our observation of the vorticity's behaviour. This echoes the well-known fact that there exists a link between the spatial and the temporal decays at infinity, for both the velocity (\textit{cf} \cite{Schonbek86,Carpio96,
MiyakawaSchonbek2001,FujigakiMiyakawa2001,Brandolese2004, Brandolese2003,OkaTsu2016}) and the vorticity (\textit{cf} \cite{GalWayR2,GalWayR3}).
\end{itemize}

Theorem \ref{theoremintro} has a translation in terms of velocity. When $\w$ is solution of \eqref{voraxi}, $u$ is the solution of the system
\begin{equation}\label{NSaxy}
\left\{\begin{array}{ll}
\partial_tu + (u\cdot\nabla)u = -\nabla P + \Delta u 
& \text{on }]0,+\infty[\times\RR^3\\
\rule[1.3em]{0pt}{0pt}
\partial_ru_r + \frac{1}{r}u_r + \partial_zu_z = 0
& \text{on }]0,+\infty[\times\RR^3\\
\rule[1.3em]{0pt}{0pt}
u_r(0,r,\theta,z) = u_0(r,z) \cdot e_r
& \forall(r,z)\in\Omega, \forall\theta\in[0,2\pi[\\ 
\rule[1.3em]{0pt}{0pt}
u_\theta(0,r,\theta,z) = 0
& \forall(r,z)\in\Omega, \forall\theta\in[0,2\pi[\\ 
\rule[1.3em]{0pt}{0pt}
u_z(0,r,\theta,z) = u_0(r,z) \cdot e_z
& \forall(r,z)\in\Omega, \forall\theta\in[0,2\pi[
\end{array}\right.
\end{equation}
subject to $u_0\cdot e_\theta = 0$, which ensures that u remains axisymmetric with zero swirl at all times, as explained in \cite{MajBert}. Let us introduce the vector fields 
\begin{equation}\label{defuG1}
u^{G_1} = -\frac{1}{r}\partial_z\varphi\ e_r + \frac{1}{r}\partial_r\varphi\ e_z
\qquad \text{ and }\qquad u^{G_2} = -\partial_zu^{G_1}
\end{equation}
where $\varphi$ is the Stokes stream function of $G_1:(r,z)\mapsto\frac{r}{16\sqrt{\pi}}\exp(-\frac{r^2+z^2}{4})$,
\begin{equation}\label{defstreamG}
\varphi(r,z) = -\frac{r}{4\sqrt{\pi}}\ \partial_r\!\left( \frac{1}{\sqrt{r^2+z^2}}\int_0^{\sqrt{r^2+z^2}}e^{\mbox{$-\frac{\sigma^2}{4}$}}d\sigma \right),
\end{equation}
and let us adapt notations \eqref{defIJ} for the velocity: $I_0 = \frac{1}{2\pi}\int_{\RR^3}r\,\rot(u_0(r,z))\,rdrd\theta dz$ and $J_\infty$ is the finite limit of $J(t) = \frac{1}{2\pi}\int_{\RR^3}rz\,\rot(u(t,r,z))\,rdrd\theta dz$ when $t\to+\infty$. The study carried out through this paper leads then to the following theorem, giving information on the velocity's decay rate in $L^p(\RR^3)^3$ for $1<p\leqslant\infty$.

\begin{theorem}\label{theoremintrou}
Consider $u$ the solution of \eqref{NSaxy} with initial data $u_0$ such that $(r,z)\mapsto\frac{1}{r}\rot(u_0(r,z))$ is in $L^1(\RR^3)$. If $\|r\,\rot(u_0)\|_{L^1(\RR^3)}<\infty$ then
\begin{equation}\label{theoremintrour2}
u(t,r,z) = I_0u^{G_1}\left(\frac{r}{\sqrt{t}},\frac{z}{\sqrt{t}}\right)t^{-3/2} + \underset{t\to+\infty}{o}\left(t^{\frac{3}{2p}-\frac{3}{2}}\right)
\end{equation}
holds in $L^p(\RR^3)^3$ for every $p\in\ ]1,+\infty]$.\\
If moreover $\|(r^2+r|z|+|z|)\,\rot(u_0)\|_{L^1(\RR^3)}<\infty$, then 
\begin{equation}\label{theoremintrour3}
u(t,r,z) = I_0u^{G_1}\left(\frac{r}{\sqrt{t}},\frac{z}{\sqrt{t}}\right)t^{-3/2} + J_\infty u^{G_2}\left(\frac{r}{\sqrt{t}},\frac{z}{\sqrt{t}}\right)t^{-2} + \underset{t\to+\infty}{o}\left(t^{\frac{3}{2p}-2}\right)
\end{equation}
holds in $L^p(\RR^3)^3$ for every $p\in\ ]1,+\infty]$.
\end{theorem}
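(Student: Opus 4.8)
The plan is to deduce Theorem~\ref{theoremintrou} from Theorem~\ref{theoremintro} by means of the axisymmetric zero-swirl Biot-Savart law, the linear map reconstructing $u$ from the scalar vorticity $\w$ through the Stokes stream function. First I would set up the dictionary between the two sets of data. Since $\rot(u_0)=\wo\,e_\theta$ and, for an axisymmetric scalar $h$, one has $\int_{\RR^3}h\,dx=2\pi\int_\Omega h\,r\,dr\,dz$ because $dx=r\,dr\,d\theta\,dz$, the standing assumption $\frac1r\rot(u_0)\in L^1(\RR^3)$ reads $\wo\in\LLL1$, while $\|r\,\rot(u_0)\|_{L^1(\RR^3)}<\infty$ and $\|(r^2+r|z|+|z|)\rot(u_0)\|_{L^1(\RR^3)}<\infty$ become exactly $\woo{r^2}<\infty$ and $\woo{(r^3+r^2|z|+r|z|)}<\infty$; likewise the constants $I_0$ and $J_\infty$ coincide with those of \eqref{defIJ}. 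Thus Theorem~\ref{theoremintro} applies and provides the vorticity expansions \eqref{theoremintror2}--\eqref{theoremintror3}, and it suffices to push them through the Biot-Savart law.

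The decisive structural facts are that this law intertwines the parabolic scaling and commutes with $\partial_z$ (translation invariance in the $z$ direction). A direct computation on the three-dimensional kernel shows that if $\w=\lambda^{-a}W(\cdot/\lambda)$ then the reconstructed velocity is $\lambda^{-(a-1)}(\mathrm{BS}\,W)(\cdot/\lambda)$, where $\mathrm{BS}$ denotes the Biot-Savart operator and $u^{G_1}=\mathrm{BS}\,G_1$ by definition of $u^{G_1}$. Hence the leading vorticity term $\frac{rI_0}{16\sqrt\pi}e^{-(r^2+z^2)/4t}t^{-5/2}$, which equals $I_0\,t^{-2}G_1(\cdot/\sqrt t)$, is reconstructed as $I_0\,t^{-3/2}u^{G_1}(\cdot/\sqrt t)$. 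Setting $G_2:=-\partial_zG_1=\frac{rz}{32\sqrt\pi}e^{-(r^2+z^2)/4}$, the second vorticity term $\frac{rzJ_\infty}{32\sqrt\pi}e^{-(r^2+z^2)/4t}t^{-7/2}$ equals $J_\infty\,t^{-5/2}G_2(\cdot/\sqrt t)$; by $\partial_z$-commutation $\mathrm{BS}\,G_2=-\partial_zu^{G_1}=u^{G_2}$, so by scaling this term becomes $J_\infty\,t^{-2}u^{G_2}(\cdot/\sqrt t)$. This produces precisely the explicit main terms of \eqref{theoremintrour2}--\eqref{theoremintrour3}.

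It remains to estimate the velocity $\mathrm{BS}[R]$ generated by the vorticity remainder $R$, which Theorem~\ref{theoremintro} controls as $o(t^{1/p-2})$ (resp. $o(t^{1/p-5/2})$) in $\LL p$ for every $p$. The heart of the matter is a Biot-Savart bound turning the scale-invariant vorticity norm into the genuine velocity norm $L^q(\RR^3)$. A scaling count forces the exponents to satisfy $\tfrac2p-\tfrac3q=1$, under which a remainder of size $o(t^{1/p-2})$ in $\LL p$ is mapped to size $o(t^{3/2q-3/2})$ in $L^q(\RR^3)$ --- exactly the target rate of \eqref{theoremintrour2}, and similarly for the second expansion. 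For each admissible velocity exponent $q$ I would prove such an estimate, choose the companion $p$, and invoke the matching remainder bound of Theorem~\ref{theoremintro} (refined, where necessary, by the more precise propositions announced in the remarks above).

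The main obstacle is precisely this Biot-Savart estimate. The kernel is of Riesz-potential type of order one in $\RR^3$, so the relation $\tfrac2p-\tfrac3q=1$ forces $p<1$ as $q\to1^+$, which leaves the available range $p\ge1$: a single $L^p(\Omega)\to L^q(\RR^3)$ bound can only reach $q\ge3$. Correspondingly the profile $u^{G_1}$ decays like $|x|^{-3}$ and just fails to belong to $L^1(\RR^3)$, which is the structural reason the statement excludes $q=1$. To cover the whole range $q\in\,]1,+\infty]$ I expect to split the kernel into a near-field/near-axis part, absorbed by its local singularity together with the $\LL p$ bounds, and a far-field part, controlled by the spatial decay of $\w$ encoded in the finite-moment hypotheses; taming the singularity on the symmetry axis $r=0$ and interpolating between the two regimes is where the genuine technical work lies.
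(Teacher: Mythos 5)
Your reduction to Theorem \ref{theoremintro} via the Biot--Savart law is the right skeleton, and your identification of the main terms (parabolic scaling plus $\partial_z$-equivariance, so that $I_0G_1(\cdot/\sqrt t)t^{-2}\mapsto I_0u^{G_1}(\cdot/\sqrt t)t^{-3/2}$ and $J_\infty G_2(\cdot/\sqrt t)t^{-5/2}\mapsto J_\infty u^{G_2}(\cdot/\sqrt t)t^{-2}$) matches the paper. You also correctly diagnose the obstruction: a single unweighted bound $\|u\|_{L^q(\RR^3)}\leqslant C\|\w\|_{\LL{p}}$ forces $\tfrac2p-\tfrac3q=1$ and hence only reaches $q\geqslant3$. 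But at exactly that point the proposal stops being a proof: the near-field/far-field kernel splitting you ``expect'' to perform is never carried out, and it is not the mechanism that actually closes the range $1<q<3$.

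The paper's device is radial weights rather than kernel splitting. One writes $\|f(t)\|_{L^q(\RR^3)}=(2\pi)^{1/q}\|r^{1/q}f(t)\|_{\LL{q}}$ and applies the \emph{weighted} Biot--Savart estimate \eqref{28}, $\|r^{1/q}u\|_{\LL{q}}\leqslant C\|r^\alpha\w\|_{\LL{p}}$ with $\tfrac1p=\tfrac1q+\tfrac{1+\frac1q-\alpha}{2}$; the constraint $p>1$ becomes $\alpha>\tfrac3q-1$, and since \eqref{28} allows $\alpha<\tfrac1q+1$ the admissible window $\tfrac3q-1<\alpha<\tfrac1q+1$ is nonempty precisely for $q>1$ (with $\alpha$ ranging up to $2$ as $q\to1^+$). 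This in turn requires \emph{weighted} remainder estimates for the vorticity, namely $t^{2-\frac1p-\frac\alpha2}\|r^\alpha(\w(t)-I_0G_1(\cdot/\sqrt t)t^{-2})\|_{\LL{p}}\to0$ and its second-order analogue (\eqref{local9}--\eqref{local10}), which do not follow from Theorem \ref{theoremintro} as stated and must be obtained by rerunning the proofs of propositions \ref{propwr2} and \ref{propwr3} with the weight $r^\alpha$, $0\leqslant\alpha\leqslant2$ --- the finite radial moments assumed on $\wo$ are exactly what makes this possible. Your proposal neither supplies these weighted remainder bounds nor an alternative estimate valid for $1<q<3$, so the range of $q$ claimed in the theorem is not actually reached.
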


This theorem gives for example another insight into the paper of Miyakawa and Schonbek \cite{MiyakawaSchonbek2001} in the case of axisymmetry without swirl, namely the fact that, taking $p=2$, the solutions to the Navier-Stokes system decreasing faster than $t\mapsto t^{-5/4}$ are exactly those such that $I_0 = J_\infty = 0$. Naming 
\[
\mathscr{E} = \left\{\frac{1}{r}\rot(u_0)\in L^1(\RR^3) \,\middle|\, \|(r^2+r|z|+|z|)\,\rot(u_0)\|_{L^1(\RR^3)}<\infty, \,I_0 = J_\infty = 0\right\}
\]
the set of all such zero-swirl axisymmetric solutions (to be exact, the set of all initial vorticity data corresponding to such solutions), one can show by following the developments in \cite{GalWayR3} that the space $\mathscr{E}$ is a smooth invariant submanifold of codimension 2 of the weighted space $\{w\in L^1(\RR^3) \mid \|(\frac{1}{r}+r^2+r|z|+|z|)w\|_{L^1(\RR^3)} < \infty\}$.

This paper is written as follows. The second section analyses the semigroup associated with the linearised vorticity equation and its behaviour at large times. We give step by step the first two terms of its expansion, together with new $L^p-L^q$ estimates on the successive remainders. Considering the integral version of \eqref{voraxi}, these technical estimates will be fundamental to deal with both the linear and the non-linear components of the vorticity. The third section initiates the asymptotic study of the vorticity until its second order, following quite the same steps as for the semigroup. Adding increasingly strong hypotheses on the localisation of the initial data $\wo$, we refine the bound on the decay rate of $\w$. The final section discusses the consequences of this study on different levels. After speaking briefly about the emergence of the first resonant term, we state some corollary on the velocity's decay rate and prove theorem \ref{theoremintrou}. We then write the first two asymptotics when the swirl is nonzero. In the appendices, we develop some argument proving the local existence of the moments of $\w$ then we establish a specific Grönwall lemma, both of them will be used several times throughout section \ref{SectionAsymptoticExpansion}.

\subsection*{Notations}

\paragraph{}
$B$ will be reserved in this article to denote the Euler Beta function. Remember that, in particular, $B(1/2,1/2) = \pi$. For more details upon the properties of the $B$ function, see for example \cite{Olver}.

$C$ will denote any generic positive constant.
Constants named $C_\alpha$ or $C_{\wo}$ will be constants depending in their contexts on an $\alpha$ parameter or on the initial data $\wo$ of the vorticity equation \eqref{voraxi}.

$L^p(\Omega)$ will denote the Lebesgue space $\LLL{p}$ with respect to the two dimensional Lebesgue measure on $\Omega = \RR_+^*\times\RR$. 

$L^p(\RR^3)$, on the contrary, will refer to the usual $L^p(\RR^3,rdrd\theta dz)$ space with the three-dimensional Lebesgue measure.

$\dii$ will be the pseudo-divergence operator acting on a function $w = w_re_r + w_ze_z$ (in cylindrical coordinates) as $\dii w = \partial_rw_r + \partial_zw_z$.

\section{The Semigroup associated with the linearised equation}
\label{SectionSemigroup}

Before anything, some crucial results are useful to have in mind. Focus on \eqref{voraxi} and remark that the equation can also be expressed as 
\begin{equation}\label{vor}
\left\{\begin{array}{ll} \displaystyle
\partial_t\w + \dii(u\,\w) = \Delta\w - \frac{\w}{r^2}
& \text{on }]0,+\infty[\times\Omega
\\ 
\w(t,0,z) = 0 & \forall(t,0,z)\in[0,+\infty[\times\partial\Omega
\\ \rule[1.3em]{0pt}{0pt}
\w(0,r,z) = \wo(r,z) & \forall(r,z)\in\Omega
\end{array}\right.
\end{equation}
by writing $\dii(u\,\w) = \partial_r(u_r\w) + \partial_z(u_z\w)$ and using that $\di\,u = \partial_ru_r + \frac{1}{r}u_r + \partial_zu_z = 0$. The differential operator $\Delta - \frac{1}{r^2}$ is the generator of a semigroup $(S(t))_{t\geqslant0}$ of bounded linear operators on $\LLL{p}$ which is strongly continuous when $1\leqslant p<\infty$ (the calculation can be found in \cite{Gallay1}), defined by $S(0) = \id$ and 
\begin{equation}\label{defSt}
S(t)[\wo] : (r,z) \longmapsto \frac{r}{4\pi t^{5/2}}\int_\Omega K\left(\frac{r\rho}{t}\right)e^{\mbox{$-\frac{(r-\rho)^2 + (z-\zeta)^2}{4t}$}} \rho^2\wo(\rho,\zeta)\,d\rho d\zeta
\end{equation}
for any $t>0$, where
\begin{equation}\label{defK}
K(\tau) = \frac{1}{\tau\sqrt{4\pi}}\int_{-\pi}^\pi e^{\mbox{$-\tau\sin^2(\frac{\phi}{2})$}}\cos(\phi)d\phi.
\end{equation}
The properties of the function $K$ will be discussed in the next subsection.
The Duhamel formula says then that \eqref{vor} is equivalent to the integral equation
\begin{equation}\label{duhamel}
\w(t) = S(t)\wo - \int_0^tS(t-s)\left[\dii(u(s)\,\w(s))\right]ds
\qquad t\geqslant0,
\end{equation}
to be still considered on the open half-plane $\Omega = \{(r,z)\in\RR^2\mid r>0\}$ and with the Dirichlet boundary condition $\w = 0$ at $r=0$. Theorem 1.1 from \cite{Gallay1} tells that for an initial data $\wo$ in $\LL{1}$, this equation admits one unique global solution $\w\in \mathscr{C}^0([0,+\infty[,\LL{1})\cap\mathscr{C}^0(]0,+\infty[,\LL{\infty})$ which is infinitely differentiable on $]0,+\infty[\times\Omega$. Because the solution of \eqref{duhamel} is unique, it also satisfies for every $t_0>0$ the relation
\begin{equation}\label{duhameluni}
\w(t) = S(t-t_0)\w(t_0) - \int_{t_0}^tS(t-s)\,\dii(u(s)\,\w(s))\,ds \qquad\forall t\geqslant t_0.
\end{equation}
Indeed, following for a duration $t-t_0$ the solution stemming from $\w(t_0)$, which exists, one necessarily gets back to the solution $\w$ at time $t$.

Equation \eqref{duhamel} consists of two very different terms to deal with, one linear and the other nonlinear, both of which involving the semigroup $(S(t))_{t\geqslant0}$. Having a good idea as to how the semigroup behaves at infinity is a key point to the asymptotic study of $\w$.

\subsection{The Semigroup kernel}

The function $K$ defined by \eqref{defK} has the following properties.

\begin{proposition}\label{rkK}
$K$ is infinitely differentiable on $[0,+\infty[$ and the functions $\tau \mapsto (1+\tau)^{3/2+i}K^{(i)}(\tau)$ are all in $L^\infty([0,+\infty[)$ for $i\in\{0,1,2,3\}$.
\end{proposition}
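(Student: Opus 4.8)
The plan is to isolate the numerator and treat the apparent singularity at the origin separately from the decay at infinity. I would set $g(\tau) := \int_{-\pi}^{\pi} e^{-\tau\sin^2(\phi/2)}\cos\phi\,d\phi$, so that $K(\tau) = g(\tau)/(\tau\sqrt{4\pi})$, and study $g$ together with all its derivatives. First I would establish that $g$ is smooth on $[0,+\infty[$: the integrand and each of its $\tau$-derivatives, namely $(-\sin^2(\phi/2))^k\cos\phi\,e^{-\tau\sin^2(\phi/2)}$, are jointly continuous and, since $\phi$ runs over the compact interval $[-\pi,\pi]$ and $0\leqslant\sin^2(\phi/2)\leqslant1$, uniformly bounded on $[0,T]\times[-\pi,\pi]$ for every $T$. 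Differentiation under the integral sign is thus justified, giving $g\in C^\infty([0,+\infty[)$ with $g^{(k)}(\tau) = (-1)^k\int_{-\pi}^{\pi}\sin^{2k}(\phi/2)\cos\phi\,e^{-\tau\sin^2(\phi/2)}\,d\phi$. The only delicate point of the smoothness claim is the $1/\tau$ factor in $K$ at $\tau=0$, which is cured by the cancellation $g(0) = \int_{-\pi}^{\pi}\cos\phi\,d\phi = 0$. By Hadamard's lemma (Taylor's formula with integral remainder), $g(0)=0$ and $g\in C^\infty$ force $g(\tau) = \tau\int_0^1 g'(s\tau)\,ds = \tau\,h(\tau)$ with $h\in C^\infty([0,+\infty[)$, so $K = h/\sqrt{4\pi}$ is smooth on $[0,+\infty[$.

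Next I would prove the core decay estimate $g^{(k)}(\tau) = O(\tau^{-1/2-k})$ as $\tau\to+\infty$ for every $k\geqslant0$, by a Laplace-type argument exploiting that $e^{-\tau\sin^2(\phi/2)}$ concentrates at $\phi=0$. The substitution $v = \sin(\phi/2)$, for which $d\phi = 2\,dv/\sqrt{1-v^2}$ and $\cos\phi = 1-2v^2$, recasts the formula above as $g^{(k)}(\tau) = 2(-1)^k\int_{-1}^{1}\frac{v^{2k}(1-2v^2)}{\sqrt{1-v^2}}\,e^{-\tau v^2}\,dv$. Splitting this at $|v| = 1/2$, on $|v|\leqslant1/2$ the weight $1/\sqrt{1-v^2}$ is bounded and the rescaling $v\mapsto v\sqrt{\tau}$ gives $\int_{\RR}v^{2k}e^{-\tau v^2}\,dv = O(\tau^{-1/2-k})$; on $|v|\geqslant1/2$ one bounds $e^{-\tau v^2}\leqslant e^{-\tau/4}$ against a fixed integrable factor, which is exponentially small. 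Hence $|g^{(k)}(\tau)|\leqslant C_k\,\tau^{-1/2-k}$ for $\tau\geqslant1$.

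Finally I would combine the two ingredients through the Leibniz rule. Writing $K = \frac{1}{\sqrt{4\pi}}\,g\cdot\tau^{-1}$ and using $\frac{d^m}{d\tau^m}\tau^{-1} = (-1)^m m!\,\tau^{-1-m}$, one gets $K^{(i)}(\tau) = \frac{1}{\sqrt{4\pi}}\sum_{k=0}^{i}\binom{i}{k}(-1)^{i-k}(i-k)!\,g^{(k)}(\tau)\,\tau^{-1-(i-k)}$, so each summand is $O(\tau^{-1/2-k})\cdot O(\tau^{-1-(i-k)}) = O(\tau^{-3/2-i})$, whence $K^{(i)}(\tau) = O(\tau^{-3/2-i})$ as $\tau\to+\infty$. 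Since $K$ and therefore every $K^{(i)}$ is continuous on $[0,+\infty[$, it is bounded on $[0,1]$, while on $[1,+\infty[$ one has $(1+\tau)^{3/2+i}\leqslant(2\tau)^{3/2+i}$; multiplying against the decay bound keeps $(1+\tau)^{3/2+i}K^{(i)}(\tau)$ bounded. This gives the statement for $i\in\{0,1,2,3\}$, and in fact for every $i$.

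I expect the genuinely nontrivial step to be the smoothness at $\tau=0$: it is invisible from the raw formula \eqref{defK} and rests entirely on the exact cancellation $g(0)=0$. The large-$\tau$ estimates are then a routine but careful Laplace computation, whose only subtlety is separating the integrable singularity of $1/\sqrt{1-v^2}$ at $v=\pm1$ from the Gaussian concentration of $e^{-\tau v^2}$ at the origin.
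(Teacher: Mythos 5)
Your proof is correct, and it takes a genuinely different route from the paper's. The paper first identifies $K$ with a modified Bessel function via $K(\tau) = \frac{\sqrt{\pi}}{\tau}e^{-\tau/2}I_1(\tau/2)$, then reads smoothness at the origin off the power series of $I_1$ (which vanishes to first order at $0$) and the decay $K^{(i)}(\tau)=O(\tau^{-3/2-i})$ off the standard asymptotic expansion $I_n(\tau)\sim e^\tau/\sqrt{2\pi\tau}$ together with the recurrence $I_n' = I_{n+1}+\frac{n}{\tau}I_n$, quoting Olver for these facts. You instead work directly on the integral representation: differentiation under the integral sign plus Hadamard's lemma with the cancellation $g(0)=\int_{-\pi}^{\pi}\cos\phi\,d\phi=0$ handles smoothness at $\tau=0$ (this is exactly the same cancellation as $I_1(0)=0$, seen without the Bessel dictionary), and the substitution $v=\sin(\phi/2)$ followed by a Laplace-type split at $|v|=1/2$ gives $g^{(k)}(\tau)=O(\tau^{-1/2-k})$, which combines with the Leibniz rule applied to $g(\tau)\cdot\tau^{-1}$ to yield the stated decay. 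Your computations check out (the Jacobian $d\phi = 2\,dv/\sqrt{1-v^2}$ and $\cos\phi=1-2v^2$ are right, and the singularity of $1/\sqrt{1-v^2}$ at $v=\pm1$ is correctly quarantined in the exponentially small region). What each approach buys: the paper's is shorter but outsources the analysis to known Bessel asymptotics and only covers $i\leqslant3$ because it stops at $I_1'''$; yours is fully self-contained, elementary, and gives the bound for every order $i$ at no extra cost.
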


This point will be crucial for propositions \ref{propS}, \ref{propS1} and \ref{propS2}, which give different kinds of estimates on the semigroup $(S(t))_{t\geqslant0}$.

\begin{proof}
Let us rewrite $K$ as
\begin{equation}\label{defKBessel}
K(\tau) = \frac{\sqrt{\pi}}{\tau}\,e^{-\tau/2}I_1\left(\frac{\tau}{2}\right)
\end{equation}
where
\begin{equation}\label{local7}
I_n(\tau) = \frac{1}{\pi}\int_0^\pi e^{\tau\cos\phi} \cos(n\phi)d\phi
\end{equation}
denotes the modified Bessel function of order $n$, $n\in\NN$. The desired properties of $K$ stem from the properties of $I_n$ for $n\in\{1,2,3,4\}$.

To begin with, writing the exponential in \eqref{local7} as a series and exchanging the series with the integral shows that $I_1$ can be expressed as $I_1(\tau) = \sum_{k\geqslant0} \frac{a_k}{(2k+1)!}\tau^{2k+1}$ where $a_k = \frac{1}{2}\times\frac{3}{4}\times...\times\frac{2k+1}{2k+2}$. Hence $K$ is infinitely differentiable on $[0,+\infty[$.

Let us then look at the behaviour of $K$, $K'$, $K''$ and $K'''$ at infinity. The asymptotic expansions of the modified Bessel functions are given by 
\begin{equation}\label{local8}
I_n(\tau) = \frac{1}{\sqrt{2\pi\tau}}e^\tau\left(1 - \frac{(4n^2-1)}{8\tau} + \frac{(4n^2-1)(4n^2-9)}{128\tau^2} + O\left(\tau^{-3}\right)\right) \quad\text{as }\tau\to+\infty,
\end{equation}
see for example the book of Franck Olver \cite[chapter 7]{Olver}, and in particular $I_1(\tau) = \frac{1}{\sqrt{2\pi\tau}}e^\tau(1+O(\tau^{-1}))$ so $K(\tau) \sim \tau^{-3/2}$ at infinity. Moreover, one gets by differentiating \eqref{local7} that $I_n'(\tau) = I_{n+1}(\tau) + \frac{n}{\tau}I_n(\tau)$ so successively 
\[\begin{array}{rl}
I_1'(\tau) &\hskip-0.6em = I_2(\tau) + \frac{1}{\tau}I_1(\tau), \\
\rule[1.3em]{0pt}{0pt} I_1''(\tau) &\hskip-0.6em = I_3(\tau) + \frac{3}{\tau}I_2(\tau), \\
\rule[1.3em]{0pt}{0pt} \text{ and }I_1'''(\tau) &\hskip-0.6em = I_4(\tau) + \frac{6}{\tau}I_3(\tau) + \frac{3}{\tau^2}I_2(\tau).
\end{array}\]
Calculating the derivatives of $K$ from \eqref{defKBessel} involves these three last formulae, which combined with \eqref{local8} lead finally to $K^{(i)}(\tau) = O(\tau^{-3/2-i})$ as $\tau\to+\infty$ for $i\in\{1,2,3\}$.
\end{proof}

\subsection{Estimates on the semigroup}\label{sectionMajoration}

Let us write some $\LL{p}-\LL{q}$ estimates on the moments of the semigroup $(S(t))_{t\geqslant0}$. Some of them already exist in \cite{Gallay1}, and can be stated the following way.

\begin{proposition}\label{prop4647}
Choose $1\leqslant p\leqslant q \leqslant \infty$ and $-1\leqslant\alpha\leqslant\beta\leqslant2$.
\begin{itemize}
\item[(i)] There exists a positive constant $C$ such that for all time $t>0$
\begin{equation}\label{46}
\|r^\alpha S(t)\wo\|_{\LL{q}} 
\leqslant C\|r^\beta\wo\|_{\LL{p}} 
\times t^{\frac{\alpha-\beta}{2}+\frac{1}{q}-\frac{1}{p}}
\end{equation}
provided that $(r,z)\mapsto r^\beta\wo(r,z)$ belongs to $\LL{p}$.
\item[(ii)] If $-1\leqslant\alpha\leqslant\beta\leqslant1$,
then there exists a positive constant $C$ such that for all time $t>0$
\begin{equation}\label{47}
\|r^\alpha  S(t)\dii w\|_{\LL{q}} 
\leqslant C \|r^\beta w\|_{\LL{p}} 
\times t^{-\frac{1}{2}+\frac{\alpha-\beta}{2}+\frac{1}{q}-\frac{1}{p}}
\end{equation}
provided that $(r,z)\mapsto r^\beta w(r,z)$ belongs to $(\LL{p})^2$.
\end{itemize}
\end{proposition}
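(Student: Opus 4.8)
The plan is to reduce both inequalities to the boundedness of a single time-independent model operator by exploiting the self-similar structure of the kernel in \eqref{defSt}, and then to obtain that boundedness from a weighted Young/Schur inequality whose hypotheses collapse to elementary one-dimensional integrals controlled by Proposition \ref{rkK}.

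First I would pass to self-similar variables. Writing the kernel of \eqref{defSt} as $G_t(r,z;\rho,\zeta)=\frac{r}{4\pi t^{5/2}}K(\frac{r\rho}{t})\,e^{-((r-\rho)^2+(z-\zeta)^2)/4t}\rho^2$ and substituting $r=\sqrt t\,x$, $\rho=\sqrt t\,y$, $z=\sqrt t\,a$, $\zeta=\sqrt t\,b$ together with the weight $r^\beta$, I would check that $r^\alpha S(t)\wo$ equals $t^{(\alpha-\beta)/2}$ times the rescaled image of $r^\beta\wo$ under the fixed integral operator $T$ with kernel
\[
H(x,a;y,b)=\frac{x^{\alpha+1}y^{2-\beta}}{4\pi}\,K(xy)\,e^{-\frac{(x-y)^2+(a-b)^2}{4}}.
\]
Because $\|\cdot\|_{\LL{p}}$ rescales by a fixed power of $t$ under the dilation $(r,z)\mapsto(r/\sqrt t,z/\sqrt t)$, the announced exponent $\frac{\alpha-\beta}{2}+\frac1q-\frac1p$ drops out automatically, and all of (i) reduces to the single claim that $T:\LL{p}\to\LL{q}$ is bounded with a norm independent of $t$.

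To bound $T$ I would invoke the Young inequality for integral operators: with $\frac1s=1-\frac1p+\frac1q$ (so $s\in[1,\infty]$ when $p\leqslant q$), it suffices to control $\sup_{y,b}\|H(\cdot,\cdot;y,b)\|_{\LL{s}}$ and $\sup_{x,a}\|H(x,a;\cdot,\cdot)\|_{\LL{s}}$; this itself follows by Hölder and Minkowski at the endpoints $(p,q)=(s',\infty)$ and $(1,s)$ and Riesz--Thorin interpolation in between. The $(a,b)$ directions contribute only finite Gaussian integrals, so the real content is the radial factor $x^{\alpha+1}y^{2-\beta}K(xy)$. Using $K(\tau)\leqslant C(1+\tau)^{-3/2}$ from Proposition \ref{rkK}, I would split into the region $y\to0$ (resp. $x\to0$), where the hypotheses $\alpha\geqslant-1$ and $\beta\leqslant2$ keep $x^{\alpha+1}$ and $y^{2-\beta}$ bounded; and the diagonal region $x\simeq y\to+\infty$, where $K(xy)\sim(xy)^{-3/2}$ combines with the prefactors to give a kernel of size $x^{\alpha-\beta}$, which stays bounded precisely because $\alpha\leqslant\beta$, the Gaussian confining all off-diagonal contributions. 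These three regimes are exactly the three sign conditions in the statement.

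For (ii) I would first integrate by parts in $(\rho,\zeta)$ to transfer the pseudo-divergence onto the kernel, writing $S(t)\dii w=-\int(\partial_\rho G_t\,w_r+\partial_\zeta G_t\,w_z)\,d\rho d\zeta$. The boundary terms vanish, at $\rho=0$ because $G_t$ carries the factor $\rho^2$ and at infinity because of the Gaussian. Each derivative produces, in self-similar variables, one extra factor $t^{-1/2}$, which is exactly the gain in \eqref{47}, together with either a factor $K'(xy)$ (when $\partial_\rho$ hits $K$) or a linear factor absorbed by the Gaussian (when it hits the exponentials); the bound $K'(\tau)\leqslant C(1+\tau)^{-5/2}$ from Proposition \ref{rkK} keeps the diagonal under control, so the same Young/Schur estimate applies to the differentiated kernels. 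The stricter upper bound $\beta\leqslant1$ enters only through the term where $\partial_\rho$ falls on $\rho^2$: this lowers the radial prefactor to $y^{1-\beta}$, which is bounded as $y\to0$ if and only if $\beta\leqslant1$. I expect the main obstacle to be this uniform verification of the kernel bounds, namely tracking how the prefactors and the decay of $K$ and $K'$ interact across the three regimes uniformly over the whole admissible range of $(p,q)$, hence over all $s\in[1,\infty]$ including the endpoints $s=1$ (the plain Schur test) and $s=\infty$ (the pointwise supremum); the scaling identity and the interpolation are mere bookkeeping once Proposition \ref{rkK} is available, and since several of these bounds already appear in \cite{Gallay1}, I would spell out only those not covered there.
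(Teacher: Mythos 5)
Your proposal is correct and takes essentially the same route as the paper, which obtains Proposition \ref{prop4647} as the special case $\alpha'=\gamma=0$, $\beta'=\beta$ of Proposition \ref{propS}: the same decay bounds on $K$ and $K'$ from Proposition \ref{rkK}, the same three-regime analysis of the radial kernel (near the axis, near the diagonal at infinity, off-diagonal absorbed by the Gaussian) accounting for the constraints $-1\leqslant\alpha\leqslant\beta\leqslant2$ (resp.\ $\leqslant1$ after the integration by parts hits $\rho^2$), and a Young-type inequality to conclude. The only differences are organisational — you rescale to self-similar variables first and run a Schur/Young test on the two-variable kernel, whereas the paper dominates the kernel pointwise by a Gaussian convolution kernel times a weight and applies Young's convolution inequality directly.
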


The constants in \eqref{46} and \eqref{47} are actually independent of $p$, $q$, $\alpha$, $\beta$ and $\wo$. The constraints $-1\leqslant\alpha\leqslant\beta\leqslant2$ and $-1\leqslant\alpha\leqslant\beta\leqslant1$ suggested respectively for \eqref{46} and \eqref{47} are optimal as long as one wishes to get time-homogeneous bounds. Proposition \ref{prop4647} is sufficient to deal with the existence, uniqueness and regularity to \eqref{vor}. Proposition \ref{prop4647} is sufficient as well to give the decay rates of the vorticity when the initial data $\wo$ is such that $r\wo$ is in $\LL{1}$, using inequality \eqref{47} up to its limit $\beta=1$ (this will be the object of lemma \ref{lemmamajorationsr}). However, in order to go further into the asymptotic expansion of $\w$, we shall need estimates on the $r$-moments of $S(t)$ higher than 2 as well as some of its $r$-and-$z$-moments that proposition \ref{prop4647} does not handle.

In practice, the following inequalities will be required in the next section; some concerning the semigroup strictly speaking, like in \eqref{46},
\begin{align}
\|r^\alpha z\,S(t)\wo\|_{\LL{q}} 
& \leqslant C \|r^\alpha(r+|z|)\,\wo\|_{\LL{p}} 
\times t^{\frac{1}{q}-\frac{1}{p}},
\label{propSralphaz}
\\
\|r^{2+\alpha}S(t)\wo\|_{\LL{q}} \el{1.3em}
& \leqslant C \|r^2(t^{\frac{\alpha}{2}}+r^\alpha)\,\wo\|_{\LL{p}} 
\times t^{\frac{1}{q}-\frac{1}{p}},
\label{propSr2alpha}
\\
\|r^\alpha z^2S(t)\wo\|_{\LL{q}} \el{1.3em}
& \leqslant C \|(r^2t^{\frac{\alpha}{2}}+r^\alpha z^2)\,\wo\|_{\LL{p}} 
\times t^{\frac{1}{q}-\frac{1}{p}},
\label{propSralphaz2}
\end{align}
and the others concerning the semigroup acting upon the $\dii$ operator, like in \eqref{47},
\begin{align}
\|r^2S(t)\dii w\|_{\LL{q}} 
& \leqslant C \|r(\sqrt{t}+r)w\|_{\LL{p}} 
\times t^{-\frac{1}{2}+\frac{1}{q}-\frac{1}{p}},
\label{propSdivr2}
\\
\|r^\alpha z\,S(t)\dii w\|_{\LL{q}} \el{1.3em}
& \leqslant C \|(r\,t^{\frac{\alpha}{2}}+r^\alpha|z|)\,w\|_{\LL{p}} 
\times t^{-\frac{1}{2}+\frac{1}{q}-\frac{1}{p}},
\label{propSdivralphaz}
\\
\|r^{2+\alpha}S(t)\dii w\|_{\LL{q}} \el{1.3em}
& \leqslant C \|(r\,t^{\frac{1+\alpha}{2}}+r^{2+\alpha})\,w\|_{\LL{p}} 
\times t^{-\frac{1}{2}+\frac{1}{q}-\frac{1}{p}},
\label{propSdivr2alpha}
\\
\|r^\alpha z^2S(t)\dii w\|_{\LL{q}} \el{1.3em}
& \leqslant C \|(r\,t^{\frac{1+\alpha}{2}}+r^\alpha z^2)\,w\|_{\LL{p}} 
\times t^{-\frac{1}{2}+\frac{1}{q}-\frac{1}{p}}
\label{propSdivralphaz2}
\end{align}
where $\alpha$ lies in $[0,1]$. These inequalities can be proven simultaneously as consequences of the following statement. 

\begin{proposition}\label{propS}
Choose $1\leqslant p\leqslant q \leqslant \infty$ and $\alpha',\gamma\geqslant 0$. Define for $t\in\ ]0,+\infty[$ and $(r,z)\in\Omega$
\begin{equation}\label{defN}
N(t,r,z) = \left(\left(\frac{r}{\sqrt{t}}\right)^\beta + \left(\frac{r}{\sqrt{t}}\right)^{\beta'}\left|\frac{z}{\sqrt{t}}\right|^{\gamma}\right)\,\left(1+\left(\frac{r}{\sqrt{t}}\right)^{\alpha'}\right).
\end{equation}
\begin{itemize}
\item[(i)] If $-1\leqslant\alpha\leqslant\beta,\beta'\leqslant2$, 
there exists a positive constant $C_{\alpha',\gamma}$ such that for all time $t>0$
\begin{equation}\label{propS46}
\|r^{\alpha+\alpha'}z^{\gamma}S(t)\wo\|_{\LL{q}} 
\leqslant C_{\alpha',\gamma} \left\|N(t)\wo\right\|_{\LL{p}} 
\times t^{\frac{\alpha+\alpha'+\gamma}{2}+\frac{1}{q}-\frac{1}{p}}
\end{equation}
provided that $(r,z)\mapsto(r^\beta+r^{\beta+\alpha'}+r^{\beta'}|z|^\gamma+r^{\beta'+\alpha'}|z|^\gamma)\wo(r,z)\in\LL{p}$.
\item[(ii)] If $-1\leqslant\alpha\leqslant\beta,\beta'\leqslant1$, 
there exists a positive constant $C_{\alpha',\gamma}$ such that for all time $t>0$
\begin{equation}\label{propS47}
\|r^{\alpha+\alpha'}z^{\gamma}S(t)\dii w\|_{\LL{q}} 
\leqslant C_{\alpha',\gamma} \left\|N(t)w\right\|_{\LL{p}} 
\times t^{-\frac{1}{2}+\frac{\alpha+\alpha'+\gamma}{2}+\frac{1}{q}-\frac{1}{p}}
\end{equation}
provided that $(r,z)\mapsto(r^\beta+r^{\beta+\alpha'}+r^{\beta'}|z|^\gamma+r^{\beta'+\alpha'}|z|^\gamma)w(r,z)\in(\LL{p})^2$.
\end{itemize}
\end{proposition}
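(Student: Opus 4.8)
The plan is to deduce both \eqref{propS46} and \eqref{propS47} from their already-available counterparts \eqref{46} and \eqref{47} in Proposition \ref{prop4647}, treating the two extra weights $r^{\alpha'}$ and $|z|^{\gamma}$ by an elementary splitting performed inside the integral kernel \eqref{defSt}. The starting observation is the pair of subadditivity bounds
\[
r^{\alpha'} \leqslant C_{\alpha'}\bigl(|r-\rho|^{\alpha'} + \rho^{\alpha'}\bigr), \qquad |z|^{\gamma} \leqslant C_{\gamma}\bigl(|z-\zeta|^{\gamma} + |\zeta|^{\gamma}\bigr),
\]
both valid for $\alpha',\gamma\geqslant0$ because $r=(r-\rho)+\rho$ with $r,\rho>0$. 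Writing the left-hand weight as $r^{\alpha+\alpha'}|z|^{\gamma} = r^{\alpha}\,r^{\alpha'}\,|z|^{\gamma}$, inserting these bounds in front of $S(t)\wo$ and expanding the product, I would split the operator into four pieces carrying respectively the factors $|r-\rho|^{\alpha'}|z-\zeta|^{\gamma}$, $\rho^{\alpha'}|z-\zeta|^{\gamma}$, $|r-\rho|^{\alpha'}|\zeta|^{\gamma}$ and $\rho^{\alpha'}|\zeta|^{\gamma}$.

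In each piece the ``increment'' factors $|r-\rho|^{\alpha'}$ and $|z-\zeta|^{\gamma}$ are absorbed into the Gaussian by the standard bound
\[
|r-\rho|^{\alpha'}|z-\zeta|^{\gamma}\,e^{-\frac{(r-\rho)^2+(z-\zeta)^2}{4t}} \leqslant C_{\alpha',\gamma}\,t^{\frac{\alpha'+\gamma}{2}}\,e^{-\frac{(r-\rho)^2+(z-\zeta)^2}{8t}},
\]
whereas the ``source'' factors $\rho^{\alpha'}$ and $|\zeta|^{\gamma}$ are kept and reinterpreted as weights $r^{\alpha'}$, $|z|^{\gamma}$ acting on the datum. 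The one point that needs care here is that the absorption has widened the Gaussian from $4t$ to $8t$; however, the derivation of Proposition \ref{prop4647} uses of the kernel \eqref{defSt} only the pointwise majorisation $K(\tau)\leqslant C(1+\tau)^{-3/2}$ supplied by Proposition \ref{rkK} together with the integrability of the heat kernel, and both properties are stable, up to a multiplicative constant, under the replacement $4t\mapsto8t$. Consequently the widened-Gaussian operators obey the very same estimates \eqref{46} and \eqref{47}, with the outer exponent $\alpha$ unchanged.

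It then remains to match the four pieces with the four monomials composing $N(t)$ in \eqref{defN} and to collect the powers of $t$. Applying \eqref{46} to the first piece with base weight $r^{\beta}$ generates the datum weight $(r/\sqrt t)^{\beta}$; to the second piece, with datum $r^{\alpha'}\wo$ and base weight $r^{\beta}$, it generates $(r/\sqrt t)^{\beta+\alpha'}$; to the third piece, with datum $|z|^{\gamma}\wo$ and base weight $r^{\beta'}$, it generates $(r/\sqrt t)^{\beta'}|z/\sqrt t|^{\gamma}$; and the fourth piece, with datum $r^{\alpha'}|z|^{\gamma}\wo$ and base weight $r^{\beta'}$, generates $(r/\sqrt t)^{\beta'+\alpha'}|z/\sqrt t|^{\gamma}$. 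In each case the accumulated power of $t$, namely the $t^{(\alpha'+\gamma)/2}$ from absorption times the $t^{(\alpha-\beta)/2+1/q-1/p}$ from \eqref{46} and the $t^{-\beta/2}$ (resp. $t^{-\beta'/2}$) hidden in the self-similar weight, collapses exactly to $t^{\frac{\alpha+\alpha'+\gamma}{2}+\frac1q-\frac1p}$, which is \eqref{propS46}. The constraints $-1\leqslant\alpha\leqslant\beta,\beta'\leqslant2$ are precisely what is needed to invoke \eqref{46} in the four applications (pieces one and two use $\beta$, pieces three and four use $\beta'$). Part (ii) is identical, with $S(t)\dii$ in place of $S(t)$: writing $\dii w=\partial_\rho w_r+\partial_\zeta w_z$ and integrating by parts turns the kernel \eqref{defSt} into its first derivatives, which merely multiply the Gaussian by polynomially growing factors of type $(r-\rho)/t$, $(z-\zeta)/t$ and produce $K'$ terms, all still absorbable and already built into \eqref{47} through Proposition \ref{rkK}; one then uses \eqref{47} instead of \eqref{46}, which accounts for the extra $t^{-1/2}$ and for the tighter range $-1\leqslant\alpha\leqslant\beta,\beta'\leqslant1$.

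The hard part, I expect, will not be the combinatorics of the four-piece splitting, which is mechanical, but the robustness claim of the second paragraph: one must check against the actual proof of \eqref{46}--\eqref{47} that replacing $4t$ by $8t$ in the heat kernel, and passing to the differentiated kernel in part (ii), leaves the $\LL{p}$--$\LL{q}$ bounds intact up to constants. Once this is granted, each of the four pieces falls directly under Proposition \ref{prop4647} and the sum reproduces the claimed estimates.
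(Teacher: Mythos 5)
Your overall strategy coincides with the paper's: the same subadditivity splitting $r^{\alpha'}\leqslant C(|r-\rho|^{\alpha'}+\rho^{\alpha'})$ and $|z|^{\gamma}\leqslant C(|z-\zeta|^{\gamma}+|\zeta|^{\gamma})$, the same absorption of the increment factors into the Gaussian at the price of $t^{(\alpha'+\gamma)/2}$ and a slightly widened exponential, and the same matching of the four resulting pieces with the four monomials of $N(t)$; your exponent bookkeeping is correct in all four cases. The difference is that where you invoke \eqref{46}--\eqref{47} as a black box for the widened-Gaussian, re-weighted kernels, the paper re-derives the required kernel estimate from scratch.

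That deferred step is a genuine gap, and it is the analytic core of the proposition rather than a routine verification. Proposition \ref{prop4647} is quoted from \cite{Gallay1} without proof in this paper, so ``checking that its proof survives the replacement $4t\mapsto 8t$'' cannot be done by citation: the argument has to be reconstructed. Concretely, what is needed is the pointwise bound
\[
r^{\alpha+1}\rho^{2}\,K\!\left(\tfrac{r\rho}{t}\right)\ \leqslant\ C\left(\tfrac{\rho}{\sqrt t}\right)^{\beta}e^{X/4}\,t^{\frac{\alpha+3}{2}},
\qquad X=\tfrac{1}{4t}\bigl((r-\rho)^{2}+(z-\zeta)^{2}\bigr),
\]
together with its analogue for the integrated-by-parts kernel in part (ii), whose terms are of the form $r^{\alpha+1+i}\rho^{1+j}X^{k/2}t^{(1-i-j)/2}K^{(i)}(r\rho/t)$. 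Establishing this requires a three-case analysis according to the relative sizes of $r$ and $\rho$: when they are comparable one uses $K^{(i)}(\tau)=O(\tau^{-3/2-i})$, and in the unbalanced cases the excess power of whichever variable dominates must be converted into $|r-\rho|\lesssim\sqrt{tX}$ and hence into a further factor $e^{X/4}$. In particular the proof of \eqref{46} itself consumes part of the Gaussian, so the budget $e^{-X}=e^{-X/2}\cdot e^{X/4}\cdot e^{X/4}$ has to be tracked globally before Young's inequality can be applied to the surviving $e^{-X/2}$; and the restriction $\beta,\beta'\leqslant1$ in part (ii) comes from the $j=0$ terms, which occur only in $A_r$ and $A_z$. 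It is exactly this computation that constitutes the paper's proof and that your proposal flags as ``the hard part'' without carrying it out.
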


If one wishes to know explicitly how the constant in \eqref{propS46} and \eqref{propS47} depends on $\alpha'$ and $\gamma$, they may write the optimal majorations
\begin{equation}\label{deltasum}
\forall\delta>0,\,\forall x\geqslant0\qquad (1+x)^\delta \leqslant \max(1,2^{\delta-1})\,(1+x^\delta)
\end{equation}
and 
\begin{equation}\label{deltaexp}
\forall\delta\geqslant0,\,\forall x\geqslant0\qquad x^\delta \leqslant \left(\frac{2\delta}{e}\right)^{\delta/2} e^{\frac{1}{4}x^2}
\end{equation}
and conclude while reading the proof coming below that $C_{\alpha',\gamma}$ can be taken as 
\[
C_{\alpha',\gamma} = C_0 \times \max(1,2^{\alpha'-1})\max(1,2^{\gamma-1}) \max\left(1,\left(\frac{2\alpha'}{e}\right)^{\alpha'/2}\right)\max\left(1,\left(\frac{2\gamma}{e}\right)^{\gamma/2}\right),
\]
where $C_0$ is determined by the bounds on $K$ and its derivatives (\textit{cf} proposition \ref{rkK}), by the value of $\|\exp(-\frac{1}{8}(r^2+z^2))\|_{\LL{1}} = 4\pi$, and by the maximum value of $\left(\frac{2\delta}{e}\right)^{\delta/2}$ for $\delta$ between 0 and 5.

\begin{proof}
Let us write from \eqref{defSt} that for all time $t>0$
\[
r^{\alpha+\alpha'}z^\gamma S(t)\wo(r,z) = \frac{1}{4\pi t^{5/2}}\int_\Omega r^{\alpha'}z^\gamma e^{-X}\ r^{\alpha+1}\rho^2K\left(\frac{r\rho}{t}\right)\ \wo(\rho,\zeta)\,d\rho d\zeta
\]
where we note $X = X(t,r-\rho,z-\zeta) = \frac{1}{4t}((r-\rho)^2 + (z-\zeta)^2)$ for the clarity of the demonstration. Using an integration by part, write as well that, for all time $t>0$,
\[
r^{\alpha+\alpha'}z^\gamma S(t)[\dii w](r,z) = -\frac{1}{4\pi t^{3}}\int_\Omega r^{\alpha'}z^\gamma e^{-X}\ r^\alpha(A_rw_r + A_zw_z)\,d\rho d\zeta
\]
where $w=(w_r,w_z)$ and the terms $A_r$ and $A_z$ have the following expressions:
\[
A_r(t,r,z,\rho,\zeta) = \frac{r\rho^2}{2\sqrt{t}}(r-\rho) K\left(\frac{r\rho}{t}\right) + \frac{r^2\rho^2}{2\sqrt{t}} K'\left(\frac{r\rho}{t}\right) + 2r\rho\sqrt{t} K\left(\frac{r\rho}{t}\right)
\]
and
\[
A_z(t,r,z,\rho,\zeta) = \frac{r\rho^2}{2\sqrt{t}}(z-\zeta) K\left(\frac{r\rho}{t}\right).
\]
In both cases, cut the factor $r^{\alpha'}z^\gamma e^{-X}$ into four parts according to \eqref{deltasum} then \eqref{deltaexp}:
\[
r^{\alpha'}|z|^\gamma
\leqslant C(|r-\rho|^{\alpha'} + \rho^{\alpha'})(|z-\zeta|^\gamma + |\zeta|^\gamma)
\leqslant C 
(t^{\frac{\alpha'}{2}} + \rho^{\alpha'})(t^{\frac{\gamma}{2}} + |\zeta|^\gamma)e^{\frac{X}{4}}.
\]
Afterwards, thanks to proposition \ref{rkK} and \eqref{deltaexp}, let us show that the factor $r^{\alpha+1}\rho^2K(\frac{r\rho}{t})$ is less than or equal to the quantity $C (\frac{\rho}{\sqrt{t}})^\beta e^{\frac{X}{4}}t^{\frac{\alpha+3}{2}}$ as follows.
\begin{itemize}[label=$\circ$] 
\item If ($\alpha+\beta-1\geqslant0$ and $0\leqslant r\leqslant 2\rho$) or ($\alpha+\beta-1\leqslant0$ and $0\leqslant\rho<2r$) then
\[
r^{\alpha+1}\rho^2 K\left(\frac{r\rho}{t}\right)
= \frac{r^{\frac{\alpha+\beta-1}{2}}\rho^{\frac{\beta-\alpha+1}{2}}}{t^{\frac{\beta-\alpha-3}{2}}}\times \left(\frac{r\rho}{t}\right)^{\frac{3+\alpha-\beta}{2}}K\left(\frac{r\rho}{t}\right)
\leqslant C \left(\frac{\rho}{\sqrt{t}}\right)^\beta e^{\frac{X}{4}} t^{\frac{\alpha+3}{2}}
\]
since $0\leqslant 3+\alpha-\beta\leqslant3$ (\textit{cf} proposition \ref{rkK}) and applying \eqref{deltasum} with $\delta=0$.
\item If $\alpha+\beta-1\geqslant0$ and $0\leqslant2\rho\leqslant r$ then
\[
r^{\alpha+1}\rho^2 K\left(\frac{r\rho}{t}\right)
= \frac{r^{\alpha+\beta-1}\rho^\beta}{t^{\frac{2\beta-4}{2}}} \times \left(\frac{r\rho}{t}\right)^{2-\beta}K\left(\frac{r\rho}{t}\right)
\leqslant C \left(\frac{\rho}{\sqrt{t}}\right)^\beta e^{\frac{X}{4}} t^{\frac{\alpha+3}{2}}
\]
since $0\leqslant 2-\beta\leqslant\frac{3}{2}$ and $r\leqslant |r-\rho|+\rho \leqslant |r-\rho|+\frac{r}{2}$ so $r\leqslant 2|r-\rho| \leqslant 4\sqrt{t\,X}$.
\item If finally $\alpha+\beta-1\leqslant0$ and $0\leqslant 2r\leqslant\rho$ then
\[
r^{\alpha+1}\rho^2 K\left(\frac{r\rho}{t}\right)
= \frac{\rho^\beta \rho^{-(\alpha+\beta-1)}}{t^{\frac{-2\alpha-2}{2}}} \times \left(\frac{r\rho}{t}\right)^{\alpha+1}K\left(\frac{r\rho}{t}\right)
\leqslant C \left(\frac{\rho}{\sqrt{t}}\right)^\beta e^{\frac{X}{4}} t^{\frac{\alpha+3}{2}}
\]
since $0\leqslant \alpha+1\leqslant\frac{3}{2}$ and $\rho\leqslant |\rho-r|+r \leqslant |\rho-r|+\frac{\rho}{2}$ so $\rho\leqslant 2|\rho-r| \leqslant 4\sqrt{t\,X}$.
\end{itemize}
Taking $\beta'$ instead of $\beta$ gives that $r^{\alpha+1}\rho^2K(\frac{r\rho}{t}) \leqslant C (\frac{\rho}{\sqrt{t}})^{\beta'}e^{\frac{X}{4}}t^{\frac{\alpha+3}{2}}$ for $-1\leqslant\alpha\leqslant\beta'\leqslant2$ as well.

Let us treat the factors $r^\alpha |A_r|$ and $r^\alpha |A_z|$ in the same way when $-1\leqslant\alpha\leqslant\beta\leqslant1$, noticing that they are sums of terms of the form $r^{\alpha+1+i}\rho^{1+j}X^{k/2} t^{\frac{1-i-j}{2}} 
\,K^{(i)}(\frac{r\rho}{t})$ where $i,j,k$ are in $\{0,1\}$. 
\begin{itemize}[label=$\circ$] 
\item If ($\alpha+\beta+i-j\geqslant0$ and $0\leqslant r\leqslant 2\rho$) or ($\alpha+\beta+i-j\leqslant0$ and $0\leqslant\rho<2r$) then
\begin{align*}
\frac{r^{\alpha+1+i}\rho^{1+j}}{t^{\frac{i+j-1}{2}}}X^{\frac{k}{2}} K^{(i)}\left(\frac{r\rho}{t}\right)
& = \frac{r^{\frac{\alpha+\beta+i-j}{2}}\rho^{\frac{\beta-\alpha-i+j}{2}}}{t^{\frac{\beta-\alpha-3}{2}}}X^{\frac{k}{2}} \times \left(\frac{r\rho}{t}\right)^{\frac{2+\alpha+i+j-\beta}{2}}K^{(i)}\left(\frac{r\rho}{t}\right)\\
&\el{2.2em} \leqslant C \left(\frac{\rho}{\sqrt{t}}\right)^\beta e^{\frac{X}{4}} t^{\frac{\alpha+3}{2}}
\end{align*}
since $0\leqslant 2+\alpha+i+j-\beta\leqslant3+2i$.
\item If $\alpha+\beta+i-j\geqslant0$ and $0\leqslant2\rho\leqslant r$ then
\begin{align*}
\frac{r^{\alpha+1+i}\rho^{1+j}}{t^{\frac{i+j-1}{2}}}X^{\frac{k}{2}} K^{(i)}\left(\frac{r\rho}{t}\right)
& = \frac{r^{\alpha+\beta+i-j}\rho^\beta}{t^{\frac{2\beta+i-j-3}{2}}}X^{\frac{k}{2}} \times \left(\frac{r\rho}{t}\right)^{1+j-\beta}K^{(i)}\left(\frac{r\rho}{t}\right)\\
&\el{2.2em} \leqslant C \left(\frac{\rho}{\sqrt{t}}\right)^\beta e^{\frac{X}{4}} t^{\frac{\alpha+3}{2}}
\end{align*}
since $0\leqslant 1+j-\beta\leqslant\frac{3+2i}{2}$ and $r\leqslant |r-\rho|+\rho \leqslant |r-\rho|+\frac{r}{2}$ so $r\leqslant 2|r-\rho| \leqslant 4\sqrt{t\,X}$.
\item If finally $\alpha+\beta+i-j\leqslant0$ and $0\leqslant 2r\leqslant\rho$ then
\begin{align*}
\frac{r^{\alpha+1+i}\rho^{1+j}}{t^{\frac{i+j-1}{2}}}X^{\frac{k}{2}} K^{(i)}\left(\frac{r\rho}{t}\right)
& = \frac{\rho^\beta \rho^{-(\alpha+\beta+i-j)}}{t^{\frac{-2\alpha-i+j-3}{2}}}X^{\frac{k}{2}} \times \left(\frac{r\rho}{t}\right)^{\alpha+i+1}K^{(i)}\left(\frac{r\rho}{t}\right)\\
&\el{2.2em} \leqslant C \left(\frac{\rho}{\sqrt{t}}\right)^\beta e^{\frac{X}{4}} t^{\frac{\alpha+3}{2}}
\end{align*}
since $0\leqslant \alpha+i+1\leqslant\frac{3+2i}{2}$ and $\rho\leqslant |\rho-r|+r \leqslant |\rho-r|+\frac{\rho}{2}$ so $\rho \leqslant 2|\rho-r| \leqslant 4\sqrt{t\,X}$.
\end{itemize}
Let us take $\beta'$ instead of $\beta$ and get that $r^\alpha|A_r| + r^\alpha|A_z| \leqslant C (\frac{\rho}{\sqrt{t}})^{\beta'}e^{\frac{X}{4}}t^{\frac{\alpha+3}{2}}$ as well.

We have now shown that on the one hand
\[
|r^{\alpha+\alpha'}z^\gamma S(t)\wo(r,z)| \leqslant \frac{C_{\alpha',\gamma}}{4\pi}\int_\Omega e^{-X/2}\ N(t,\rho,\zeta)\,|\wo(\rho,\zeta)|\,d\rho d\zeta \times t^{\frac{\alpha+\alpha'+\gamma}{2}-1}
\]
and on the other hand
\[
|r^{\alpha+\alpha'}z^\gamma S(t)\dii w(r,z)| \leqslant \frac{C_{\alpha',\gamma}}{4\pi}\int_\Omega e^{-X/2}\ N(t,\rho,\zeta)\,|w(\rho,\zeta)|\,d\rho d\zeta \times t^{-\frac{1}{2}+\frac{\alpha+\alpha'+\gamma}{2}-1},
\]
which concludes the proof thanks to Young's inequality for convolution.
\end{proof}

Let us remark eventually that all of the estimates \eqref{propSralphaz} to \eqref{propSdivralphaz2} are indeed obtained from proposition \ref{propS} with well choosen values of $\alpha$, $\alpha'$, $\beta$, $\beta'$ and $\gamma$. Incidentally, let us see that proposition \ref{prop4647} is also contained in proposition \ref{propS}, corresponding to the cases when $\beta'=\beta$ and $\alpha'=\gamma=0$.

\subsection{Approximation of the semigroup at first order}
\label{sectionApproximationFirstOrder}

The goal is now to specify the behaviour of the semigroup $(S(t))_{t\geqslant0}$ at large times. We know from \eqref{46} that $\|S(t)\wo\|_{\LL{1}} \leqslant C\woo{r^\beta}t^{-\beta/2}$ as long as $\beta\in[0,2]$. In particular, when $(r,z)\mapsto r^2\wo(r,z)$ is in $\LL{1}$, it appears that the equivalent
\begin{equation}\label{equivalentS1L1}
S(t)\wo \ \underset{t\to+\infty}{\sim} \ \frac{I_0r}{16\sqrt{\pi}}e^{\mbox{$-\frac{r^2+z^2}{4t}$}}\, t^{-5/2}
\end{equation}
holds whenever $I_0 = \int_\Omega r^2\wo(r,z)\,drdz \neq 0$.  This will be the object of lemma \ref{lemmaconvergenceS}. Hence, as long as $I_0$ is nonzero, $t\mapsto S(t)\wo$ decreases in $\LL{1}$ exactly as $t\mapsto t^{-1}$ up to a multiplicative constant. One thing to notice here is the preponderant role of the radial coordinate in axisymmetry, for until now all the hypotheses made relate to the $r$-moments of $\wo$ and no additional spatial decay has been asked so far along the vertical axis.

Let us introduce the Gaussian-like function
\begin{equation}\label{defG1}
G_1(r,z) = \frac{r}{16\sqrt{\pi}}e^{\mbox{$-\frac{r^2+z^2}{4}$}}
\end{equation}
on $\Omega$ and the notation
\begin{equation}\label{defS1}
S_1(t)\wo : (r,z) \mapsto S(t)\wo(r,z) - \left(\int_\Omega \rho^2\wo(\rho,\zeta)\,d\rho d\zeta\right) G_1\left(\frac{r}{\sqrt{t}},\frac{z}{\sqrt{t}}\right)t^{-2}
\end{equation}
when $\woo{(1+r^2)}<\infty$ and $t>0$. Despite its denomination, chosen because $S_1$ gives the first order approximation of $S$ at large times, the family of operators $(S_1(t))_{t>0}$ is not a semigroup and is not strongly continuous at $t=0$. We now state some estimates on $(S_1(t))_{t>0}$ comparable to those from proposition \ref{propS}.

\begin{proposition}\label{propS1}
Choose $1\leqslant p\leqslant q \leqslant \infty$ such that $p<2$ and $-1\leqslant\alpha\leqslant \beta \leqslant 2$.
\begin{itemize}
\item[(i)] If $3-\frac{2}{p}<\beta\leqslant2$
then there exists a positive constant $C$ such that for all time $t>0$
\begin{equation}\label{propS1r}
\|r^\alpha S_1(t)\wo\|_{\LL{q}} 
\leqslant \frac{C}{\frac{1}{p}+\frac{\beta+1}{2}-2}\, \|r^\beta(r+|z|)\,\wo\|_{\LL{p}} 
\times t^{-\frac{1}{2}+\frac{\alpha-\beta}{2}+\frac{1}{q}-\frac{1}{p}}
\end{equation}
provided that $(r,z)\mapsto r^2\wo(r,z)\in\LL{1}$ and $(r,z)\mapsto r^\beta(r+|z|)\wo(r,z)\in\LL{p}$.
\item[(ii)] If $2-\frac{2}{p}<\beta\leqslant1$
then there exists a positive constant $C$ such that for all time $t>0$
\begin{equation}\label{propS1divr}
\|r^\alpha S_1(t)\dii w\|_{\LL{q}} 
\leqslant \frac{C}{\frac{1}{p}+\frac{\beta+2}{2}-2}\, \|r^\beta(r+|z|)\,w\|_{\LL{p}} 
\times t^{-1+\frac{\alpha-\beta}{2}+\frac{1}{q}-\frac{1}{p}}
\end{equation}
provided that $(r,z)\mapsto r^2\dii w(r,z)\in\LL{1}$ and $(r,z)\mapsto r^\beta(r+|z|)w(r,z)\in(\LL{p})^2$.
\end{itemize}
\end{proposition}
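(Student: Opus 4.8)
The plan is to realise $S_1(t)$ as a single integral operator whose kernel is the difference between the kernel of $S(t)$ and its value ``frozen at the spatial origin'', and then to extract half a power of $t$ from this difference by a first-order Taylor expansion. The preliminary remark that makes everything fit is that $K(0)=\frac{\sqrt{\pi}}{4}$, which is immediate from \eqref{defKBessel} and the expansion $I_1(\tau)=\frac12\tau+O(\tau^3)$ recorded in the proof of Proposition \ref{rkK}. With $X=\frac{(r-\rho)^2+(z-\zeta)^2}{4t}$ and $X_0=\frac{r^2+z^2}{4t}$, the subtracted term in \eqref{defS1} is then exactly the operator \eqref{defSt} with $K\big(\frac{r\rho}{t}\big)$ replaced by $K(0)$ and $e^{-X}$ by $e^{-X_0}$, since
\[
\Big(\int_\Omega\rho^2\wo\,d\rho d\zeta\Big)\,G_1\Big(\tfrac{r}{\sqrt t},\tfrac{z}{\sqrt t}\Big)\,t^{-2}
=\frac{r}{4\pi t^{5/2}}\int_\Omega K(0)\,e^{-X_0}\,\rho^2\wo(\rho,\zeta)\,d\rho d\zeta .
\]
Thus, writing $\Phi=K\big(\frac{r\rho}{t}\big)e^{-X}-K(0)e^{-X_0}$, I would start from the clean representation $S_1(t)\wo(r,z)=\frac{r}{4\pi t^{5/2}}\int_\Omega\Phi\,\rho^2\wo\,d\rho d\zeta$, and for part (ii) from the analogous one with $\wo$ replaced by $\dii w$.

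The heart of the argument is to expand $\Phi$ along the segment from $(0,0)$ to $(\rho,\zeta)$:
\[
\Phi=\int_0^1\frac{d}{ds}\Big[K\Big(\tfrac{sr\rho}{t}\Big)e^{-X_s}\Big]\,ds,
\qquad X_s=\frac{(r-s\rho)^2+(z-s\zeta)^2}{4t},
\]
whose integrand is $\frac{r\rho}{t}K'\big(\frac{sr\rho}{t}\big)e^{-X_s}+K\big(\frac{sr\rho}{t}\big)\frac{\rho(r-s\rho)+\zeta(z-s\zeta)}{2t}e^{-X_s}$. Two features make this the right move. First, every resulting term carries one extra factor $\rho$ or $\zeta$, which after integration against $\wo$ turns the weight $r^\beta$ into $r^\beta(r+|z|)$, together with a compensating $t^{-1}$; by the homogeneity already visible in Proposition \ref{propS}, spending this extra unit of moment is exactly what upgrades the decay by $t^{-1/2}$. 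Second, although $K$ has been differentiated into $K'$, Proposition \ref{rkK} guarantees that $K'$ decays one order faster than $K$, so that $\frac{r\rho}{t}K'\big(\frac{r\rho}{t}\big)$ has the same far-field size as $K\big(\frac{r\rho}{t}\big)$ and the pointwise bounds of Proposition \ref{propS} remain available.

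From here I would follow the proof of Proposition \ref{propS} almost verbatim. For each fixed $s$, the same three-region discussion ($r\le2\rho$, $2\rho\le r$, $2r\le\rho$) together with the elementary splittings \eqref{deltasum}--\eqref{deltaexp} bounds the kernel by $C\,(\rho/\sqrt t)^\beta$ (and, for the $\zeta$-term, by the analogous mixed weight carrying one factor $|\zeta/\sqrt t|$, i.e.\ the $\gamma=1$ case of Proposition \ref{propS}) times the appropriate power of $t$ and $e^{-X_s/2}$, reducing matters to a Gaussian convolution. The one genuinely new point, and the hard part, is the application of Young's inequality: the Gaussian $e^{-X_s/2}$ is centred at $(s\rho,s\zeta)$ rather than $(\rho,\zeta)$, so it must first be recentred through the substitution $(\rho,\zeta)\mapsto(s\rho,s\zeta)$, and it is this recentring that leaves the Taylor parameter inside a residual Beta-type integral. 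Once the powers of $t$ have recombined to the exponent displayed in \eqref{propS1r}, what remains is $\int_0^1 s^{\frac1p+\frac{\beta+1}{2}-3}\,ds=\big(\tfrac1p+\tfrac{\beta+1}{2}-2\big)^{-1}$, which is finite precisely when $\beta>3-\frac2p$ (and whose admissible range is nonempty exactly because $p<2$); this is the source both of the hypothesis and of the explicit constant in \eqref{propS1r}.

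Part (ii) is obtained the same way once an integration by parts transfers $\dii$ from $w$ onto the kernel, exactly as in Proposition \ref{propS}(ii); this replaces $\rho^2K\big(\frac{r\rho}{t}\big)$ by the combinations $A_r,A_z$ built from $K$ and $K'$ and supplies the additional $t^{-1/2}$ responsible for the exponent $-1+\frac{\alpha-\beta}{2}+\frac1q-\frac1p$. The same integration by parts has to be carried out on the subtracted moment as well, via $\int_\Omega\rho^2\dii w\,d\rho d\zeta=-2\int_\Omega\rho\,w_r\,d\rho d\zeta$, so that the final bound is expressed through $w$ and the weight $r^\beta(r+|z|)w$, and the Beta-type integral now converges under $\beta>2-\frac2p$. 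I expect the entire difficulty of the proof to be concentrated in the recentring step of Young's inequality and the resulting book-keeping of the power of $s$: it is what simultaneously fixes the admissible range of $\beta$, the blow-up of the constant as $\beta$ decreases to $3-\frac2p$ (resp. $2-\frac2p$), and the final gain of $t^{-1/2}$ over Proposition \ref{propS}.
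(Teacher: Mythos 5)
Your proposal is correct and is essentially the paper's own proof: the paper also realises $S_1(t)$ via a first-order Taylor expansion of $K(\cdot)e^{-X}$ with integral remainder (written in self-similar variables as an integral $\int_t^{+\infty}\frac{d}{ds}[\,\cdot\,]\,ds$, which is your homotopy $s\in[0,1]$ after the substitution $s\mapsto\sqrt{t/s}$), then reuses the three-region kernel bounds of Proposition \ref{propS}, Young's inequality after recentring, and the residual Beta-type integral to produce both the constraint $\beta>3-\frac{2}{p}$ (resp. $2-\frac{2}{p}$) and the explicit constant. The identification $K(0)=\frac{\sqrt{\pi}}{4}$ and the treatment of part (ii) by integrating $\dii$ onto the kernel likewise match the paper.
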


The constants in \eqref{propS1r} and \eqref{propS1divr} can be shown to be independent of $p$, $q$, $\alpha$, $\beta$ and $\wo$. The proof follows the same procedure than the one of proposition \ref{propS}, with the difference that we shall conduct this one in the self-similar space-and-time coordinates $(r\sqrt{t},z\sqrt{t})$.

\begin{proof}
Let us write for all $t>0$ and $(r,z),(\rho,\zeta)\in\Omega$ the first order asymptotic expansion with integral remainder
\begin{equation}\label{demopropS1Taylor}
K\left(\frac{r\rho}{\sqrt{t}}\right) e^{-X(t)}
= \frac{\sqrt{\pi}}{4}e^{-\frac{1}{4}(r^2 + z^2)}
+ \int_t^{+\infty} \frac{1}{2s^{3/2}} A_0\,e^{-X(s)}ds
\end{equation}
where we shall use the notation $X(t) 
= \frac{1}{4}((r-\frac{\rho}{\sqrt{t}})^2 + (z-\frac{\zeta}{\sqrt{t}})^2)$ and where $A_0$ denotes the quantity
\begin{equation}\label{demopropS1A0}
A_0 = r\rho K'\left(\frac{r\rho}{\sqrt{s}}\right) + \frac{1}{2}\left(\rho\left(r-\frac{\rho}{\sqrt{s}}\right) + \zeta\left(z-\frac{\zeta}{\sqrt{s}}\right)\right) K\left(\frac{r\rho}{\sqrt{s}}\right).
\end{equation}
From that expansion and the expression of the semigroup \eqref{defSt} in the self-similar coordinates $(r\sqrt{t},z\sqrt{t})$, one gets that on the one hand
\begin{equation}\label{demopropS1S1}
r^\alpha S_1(t)\wo(r\sqrt{t},z\sqrt{t}) = \frac{1}{8\pi t^2}\int_\Omega\int_t^{+\infty}\frac{1}{s^{3/2}}\ r^{\alpha+1}\rho^2A_0\,e^{-X(s)}\ \wo(\rho,\zeta)\,ds\,d\rho d\zeta
\end{equation}
for all time $t>0$, and on the other hand after integrating by parts
\begin{equation}\label{demopropS1S1div}
r^\alpha S_1(t)[\dii w](r\sqrt{t},z\sqrt{t}) = -\frac{1}{8\pi t^{2}}\int_\Omega\int_t^{+\infty}\frac{1}{s^2}\ r^\alpha(A_rw_r + A_zw_z) e^{-X(s)}ds\,d\rho d\zeta
\end{equation}
where $w=(w_r,w_z)$ and
\begin{equation}\label{demopropS1ArAz}
\begin{array}{l}
A_r 
= \left(\frac{r\rho^2}{2\sqrt{s}} \left(r-\frac{\rho}{\sqrt{s}}\right)A_0 + \partial_\rho(r\rho^2A_0)\right)\sqrt{s}
\\ \el{1.7em}
A_z 
= \left(\frac{r\rho^2}{2\sqrt{s}} \left(z-\frac{\zeta}{\sqrt{s}}\right)A_0 + \partial_\zeta(r\rho^2A_0)\right)\sqrt{s}.
\end{array}
\end{equation}
The rest of the demonstration uses the arguments already encountered in the proof of proposition \ref{propS}. The calculation gives that $r\rho^2A_0$, $A_r$ and $A_z$ are sums of terms of the form
\[
\frac{r^{1+i}\rho^{1+j}}{s^{\frac{j-1}{2}}} \rho^k\left(r-\frac{\rho}{\sqrt{s}}\right)^{k'}\! \zeta^{1-k}\left(z-\frac{\zeta}{\sqrt{s}}\right)^{k''}\! K^{(i)}\left(\frac{r\rho}{\sqrt{s}}\right)
\]
with $j,k\in\{0,1\}$ and $i,k',k''\in\{0,1,2\}$. It is important to note here that the terms with $j=0$ appear only in $A_r$ and $A_z$, when $\beta\leqslant1$. Then see that according to \eqref{deltaexp}
\[
\rho^k\left|r-\frac{\rho}{\sqrt{s}}\right|^{k'} |\zeta|^{1-k}\left|z-\frac{\zeta}{\sqrt{s}}\right|^{k''} \leqslant (\rho + |\zeta|)\left|r-\frac{\rho}{\sqrt{s}}\right|^{k'} \left|z-\frac{\zeta}{\sqrt{s}}\right|^{k''} \leqslant C(\rho + |\zeta|)e^{\frac{X(s)}{4}}
\]
for every $k\in\{0,1\}$ and $k',k''\in\{0,1,2\}$.

Afterwards, thanks to proposition \ref{rkK} and to \eqref{deltaexp}, let us show that the factors of the form $r^\alpha\times r^{1+i}\rho^{1+j}s^{\frac{1-j}{2}} K^{(i)}(\frac{r\rho}{\sqrt{s}})$ are less than or equal to the quantity $C \rho^\beta s^{\frac{2-\beta}{2}} e^{\frac{X(s)}{4}}$ as follows. For $i\in\{0,1,2\}$ and $j\in\{0,1\}$ corresponding to the factors of interest, meaning that $j=0$ only when $\beta\leqslant 1$,
\begin{itemize}[label=$\circ$] 
\item if ($\alpha+\beta+i-j\geqslant0$ and $0\leqslant r\leqslant 2\frac{\rho}{\sqrt{s}}$) or ($\alpha+\beta+i-j\leqslant0$ and $0\leqslant\frac{\rho}{\sqrt{s}}<2r$) then
\begin{align*}
\frac{r^{\alpha+1+i}\rho^{1+j}}{s^{\frac{j-1}{2}}} K^{(i)}\left(\frac{r\rho}{\sqrt{s}}\right)
& = \frac{r^{\frac{\alpha+\beta+i-j}{2}}\rho^{\frac{\beta-\alpha-i+j}{2}}}{s^{\frac{\beta-\alpha-i+j-4}{4}}} \times \left(\frac{r\rho}{\sqrt{s}}\right)^{\frac{2+\alpha+i+j-\beta}{2}}K^{(i)}\left(\frac{r\rho}{\sqrt{s}}\right)\\
&\el{1.7em} \leqslant C \rho^\beta s^{\frac{2-\beta}{2}} e^{\frac{X(s)}{4}}
\end{align*}
since $0\leqslant 2+\alpha+i+j-\beta\leqslant3+2i$,
\item if $\alpha+\beta+i-j\geqslant0$ and $0\leqslant2\frac{\rho}{\sqrt{s}}\leqslant r$ then
\begin{align*}
\frac{r^{\alpha+1+i}\rho^{1+j}}{s^{\frac{j-1}{2}}} K^{(i)}\left(\frac{r\rho}{\sqrt{s}}\right)
& = \frac{r^{\alpha+\beta+i-j}\rho^\beta}{s^{\frac{\beta-2}{2}}} \times \left(\frac{r\rho}{\sqrt{s}}\right)^{1+j-\beta}K^{(i)}\left(\frac{r\rho}{\sqrt{s}}\right)\\
&\el{1.7em} \leqslant C \rho^\beta s^{\frac{2-\beta}{2}} e^{\frac{X(s)}{4}}
\end{align*}
since $0\leqslant 1+j-\beta\leqslant\frac{3+2i}{2}$ and $r\leqslant |r-\frac{\rho}{\sqrt{s}}|+\frac{\rho}{\sqrt{s}} \leqslant |r-\frac{\rho}{\sqrt{s}}|+\frac{r}{2}$ so $r\leqslant 2|r-\frac{\rho}{\sqrt{s}}| \leqslant 4\sqrt{X(s)}$,
\item and if $\alpha+\beta+i-j\leqslant0$ and $0\leqslant 2r\leqslant\frac{\rho}{\sqrt{s}}$ then
\begin{align*}
\frac{r^{\alpha+1+i}\rho^{1+j}}{s^{\frac{j-1}{2}}} K^{(i)}\left(\frac{r\rho}{\sqrt{s}}\right)
& = \frac{\rho^\beta}{s^{\frac{\beta-2}{2}}}\left(\frac{\rho}{\sqrt{s}}\right)^{-(\alpha+\beta+i-j)} \times \left(\frac{r\rho}{\sqrt{s}}\right)^{\alpha+i+1}K^{(i)}\left(\frac{r\rho}{\sqrt{s}}\right)\\
&\el{1.7em} \leqslant C \rho^\beta s^{\frac{2-\beta}{2}} e^{\frac{X(s)}{4}}
\end{align*}
since $0\leqslant \alpha+i+1\leqslant\frac{3+2i}{2}$ and $\frac{\rho}{\sqrt{s}}\leqslant |\frac{\rho}{\sqrt{s}}-r|+r \leqslant |\frac{\rho}{\sqrt{s}}-r|+\frac{\rho}{2\sqrt{s}}$ so $\frac{\rho}{\sqrt{s}} \leqslant 2|\frac{\rho}{\sqrt{s}}-r| \leqslant 4\sqrt{X(s)}$.
\end{itemize}

It is only left to write, using first a Fubini inversion between the time and space integrals and then Young's inequality for convolution, that on the one hand
\begin{align*}
\|r^\alpha S_1(t)\wo(r\sqrt{t},z\sqrt{t})\|_{\LL{q}} 
& \leqslant \left\| \frac{C}{t^2}\int_\Omega\int_t^{+\infty} s^{-\frac{\beta+1}{2}} e^{-\frac{X(s)}{2}}\ \rho^\beta(\rho + |\zeta|)\,|\wo(\rho,\zeta)|\,ds\,d\rho d\zeta \right\|_{\LL{q}} \\
& \leqslant \frac{C}{t^2} \int_t^{+\infty}s^{-\frac{\beta+1}{2}} \left\| \int_\Omega e^{-\frac{X(s)}{2}}\ \rho^\beta(\rho + |\zeta|)\,|\wo(\rho,\zeta)|\,d\rho d\zeta \right\|_{\LL{q}}\,ds \\
& \leqslant \frac{C}{t^2} \int_t^{+\infty}s^{-\frac{\beta+1}{2}+1-\frac{1}{p}}ds\ \left\|r^\beta(r + |z|)\wo(r,z)\right\|_{\LL{p}} 
\end{align*}
and on the other hand
\begin{align*}
\|r^\alpha S_1(t)\dii w(r\sqrt{t},z\sqrt{t})\|_{\LL{q}} 
& \leqslant \left\| \frac{C}{t^2}\int_\Omega\int_t^{+\infty} s^{-\frac{\beta+2}{2}} e^{-\frac{X(s)}{2}}\ \rho^\beta(\rho + |\zeta|)\,|w(\rho,\zeta)|\,ds\,d\rho d\zeta \right\|_{\LL{q}} \\
& 
\leqslant \frac{C}{t^2} \int_t^{+\infty}s^{-\frac{\beta+2}{2}} \left\| \int_\Omega e^{-\frac{X(s)}{2}}\ \rho^\beta(\rho + |\zeta|)\,|w(\rho,\zeta)|\,d\rho d\zeta \right\|_{\LL{q}}\,ds \\
& \leqslant \frac{C}{t^2} \int_t^{+\infty}s^{-\frac{\beta+2}{2}+1-\frac{1}{p}}ds\ \left\|r^\beta(r + |z|)w(r,z)\right\|_{\LL{p}},
\end{align*}
and proof is made with a last change of variables on the left hand sides of these two last inequalities.
\end{proof}

We can now prove the statement made in \eqref{equivalentS1L1}, which is true in every $\LL{p}$ for $1\leqslant p\leqslant\infty$. The demonstration will be performed again in the self-similar coordinates $(z\sqrt{t},z\sqrt{t})$.

\begin{lemma}\label{lemmaconvergenceS}
Take $\wo\in\LL{1}$ such that $(r,z)\mapsto r^2\wo(r,z)\in\LL{1}$. Remember that $(S(t))_{t\geqslant0}$ is the semigoup defined by \eqref{defSt} and $I_0 = \int_\Omega r^2\wo(r,z)\,drdz$. Then
\begin{equation}\label{lemmaconvergenceSconvergence}
S(t)\wo(r,z)\ = \ \frac{I_0r}{16\sqrt{\pi}}e^{\mbox{$-\frac{r^2+z^2}{4t}$}}\ t^{-5/2} + \underset{t\to+\infty}{o}\left(t^{\frac{1}{p}-2}\right)
\end{equation}
in $\LL{p}$ for every $p\in[1,+\infty]$.
\end{lemma}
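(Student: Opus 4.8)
The plan is to recast the statement in terms of the operator $S_1$ from \eqref{defS1}. Since $G_1(r/\sqrt t,z/\sqrt t)\,t^{-2}=\frac{r}{16\sqrt\pi}e^{-(r^2+z^2)/4t}\,t^{-5/2}$ by \eqref{defG1}, the claimed leading term in \eqref{lemmaconvergenceSconvergence} is exactly $I_0\,G_1(\cdot/\sqrt t)\,t^{-2}$, so the lemma is equivalent to showing $\|S_1(t)\wo\|_{\LL p}=o(t^{1/p-2})$. A key preliminary observation is that the profile $g(t,r,z):=G_1(r/\sqrt t,z/\sqrt t)\,t^{-2}$ is itself an exact solution of the linearised equation $\partial_t g=(\Delta-r^{-2})g$: this is a one-line verification resting on the identity $(\Delta-r^{-2})(rf)=r(\partial_r^2+\tfrac3r\partial_r+\partial_z^2)f$. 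By uniqueness of the solution one then has $S(t-s)g(s)=g(t)$, and since $I_0=\int_\Omega r^2\wo\,drdz$ is time-independent, combining this with the semigroup property of $S$ yields the near-semigroup identity $S_1(t)\wo=S(t-s)\,S_1(s)\wo$ for $0<s<t$. This identity is the engine for passing from $p=1$ to every $p$.

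First I would settle the case $p=1$ by a density argument. Given $\varepsilon>0$, approximate $\wo$ by a compactly supported $\tilde\wo$ with $\|r^2(\wo-\tilde\wo)\|_{\LL1}<\varepsilon$; such $\tilde\wo$ exist because truncation $\chi_R\wo$ converges in this weighted norm by dominated convergence, and being compactly supported they enjoy all the moments required below. Splitting $S_1(t)\wo=S_1(t)\tilde\wo+S_1(t)(\wo-\tilde\wo)$, I bound the first piece with Proposition \ref{propS1}(i) at $p=q=1$, $\alpha=0$, $\beta=2$: since $r^2(r+|z|)\tilde\wo\in\LL1$, this gives $\|S_1(t)\tilde\wo\|_{\LL1}\leqslant C\,t^{-3/2}=o(t^{-1})$. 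For the second piece I treat the two summands of $S_1(t)(\wo-\tilde\wo)=S(t)(\wo-\tilde\wo)-\tilde I_0\,g(t)$ separately, where $\tilde I_0=\int_\Omega r^2(\wo-\tilde\wo)\,drdz$: Proposition \ref{prop4647}(i) with $\alpha=0,\beta=2,p=q=1$ gives $\|S(t)(\wo-\tilde\wo)\|_{\LL1}\leqslant C\,\|r^2(\wo-\tilde\wo)\|_{\LL1}\,t^{-1}$, while $\|g(t)\|_{\LL1}=\|G_1\|_{\LL1}\,t^{-1}$ together with $|\tilde I_0|\leqslant\|r^2(\wo-\tilde\wo)\|_{\LL1}$ bounds the Gaussian correction by $\varepsilon\,\|G_1\|_{\LL1}\,t^{-1}$. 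Hence $\limsup_{t\to\infty}t\,\|S_1(t)\wo\|_{\LL1}\leqslant C\varepsilon$, and letting $\varepsilon\to0$ proves the case $p=1$.

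Then I would bootstrap to every $p\in\,]1,+\infty]$ using the near-semigroup identity with $s=t/2$, namely $S_1(t)\wo=S(t/2)\,S_1(t/2)\wo$. Applying the smoothing estimate \eqref{46} with $\alpha=\beta=0$, input exponent $1$ and output exponent $p$, gives $\|S_1(t)\wo\|_{\LL p}\leqslant C\,(t/2)^{1/p-1}\,\|S_1(t/2)\wo\|_{\LL1}$; inserting the already-established bound $\|S_1(t/2)\wo\|_{\LL1}=o((t/2)^{-1})$ yields $\|S_1(t)\wo\|_{\LL p}=o(t^{1/p-2})$, which is the claim in $\LL p$ for all remaining $p$, including $p=\infty$.

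The main obstacle is the base case $p=1$, and precisely the requirement that the localised part and the tail be controlled by the \emph{same} small quantity $\|r^2(\wo-\tilde\wo)\|_{\LL1}$ uniformly in $t$. This is what forces one to peel off the exact Gaussian $I_0\,g(t)$ before applying the crude smoothing bound of Proposition \ref{prop4647}, rather than trying to invoke Proposition \ref{propS1} on $\wo$ itself, which is unavailable since $\wo$ is not assumed to satisfy the stronger moment condition $r^2(r+|z|)\wo\in\LL1$. The verification that $g$ solves the linearised equation, which underpins the near-semigroup identity and hence the whole bootstrap to large $p$, is the other point to get right, though it is a routine computation.
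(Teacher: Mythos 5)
Your proof is correct and follows essentially the same route as the paper: both truncate $\wo$, control the localised part with Proposition \ref{propS1} (which yields the faster decay $t^{-3/2}$), control the tail with \eqref{46} together with the explicit Gaussian correction weighted by the small quantity $\|r^2(\wo-\tilde\wo)\|_{\LL{1}}$, and then let the truncation exhaust $\Omega$. The only divergence is the passage to $p>1$: the paper reapplies Proposition \ref{propS1} and \eqref{46} for each $p$ directly in the self-similar variables, whereas you settle $p=1$ first and bootstrap via $S_1(t)\wo=S(t/2)S_1(t/2)\wo$ (a consequence of \eqref{rkproptyG1} and the semigroup property) combined with the $L^1$--$L^p$ smoothing of $S$; both are valid.
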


\begin{proof}[Proof]
Let us recall that $G_1 : \Omega \to\RR : (r,z) \mapsto \frac{r}{16\sqrt{\pi}}\exp(-\frac{r^2+z^2}{4})$ and $S_1:\wo\mapsto S(t)\wo - I_0G_1(\frac{\cdot}{\sqrt{t}})t^{-2}$. The goal is to show that $\|t^2S_1(t)\wo(\cdot\sqrt{t})\|_{\LL{p}}$ tends to zero as $t$ goes to infinity for $p$ in $[1,+\infty]$.

Let us consider for any $a>1$ the indicator function $\chi_a$ of the truncated subset $[\frac{1}{a},a]\times[-a,a]$ of $\Omega$, and the quantity $I_0^a = \int_{1/a}^a\int_{-a}^a \rho^2\wo(\rho,\zeta)\,d\rho d\zeta = \int_\Omega\rho^2\chi_a(\rho,\zeta)\wo(\rho,\zeta)\,d\rho d\zeta$ which tends towards $I_0$ as $a$ goes to infinity.
Thanks to estimate \eqref{propS1r} with $\alpha=0$ and $\beta=2$, we see that
\[
\begin{array}{ll}
\left\|t^2S_1(t)[\chi_a\wo](\cdot\sqrt{t})\right\|_{\LL{p}}
\kern-0.6em & = 
t^{2-\frac{1}{p}}\|S_1(t)[\chi_a\wo]\|_{\LL{p}}
\\\rule[1.3em]{0pt}{0pt} & \hspace{-0.7em}\underset{\eqref{propS1r}}{\leqslant} 
2C \woo{r^2(r+|z|)\chi_a}\times t^{-\frac{1}{2}} \xrightarrow[t\to+\infty]{} 0.
\end{array}
\]
We see as well that $t^{2-\frac{1}{p}}\|S(t)[(1-\chi_a)\wo]\|_{\LL{p}} \leqslant C\,\|r^2\wo(1-\chi_a)\|_{\LL{1}}$ using \eqref{46} with $\alpha=0$ and $\beta=2$, then writing
\[
t^2S_1(t)\wo(r\sqrt{t},z\sqrt{t}) = t^2S(t)[\wo-\chi_a\wo](r\sqrt{t},z\sqrt{t}) + t^2S_1(t)[\chi_a\wo](r\sqrt{t},z\sqrt{t}) + (I_0^a-I_0)G_1(r,z)
\]
we finally have
\begin{align*}
\underset{t\to+\infty}{\limsup}\ \|t^2S_1(t)\wo(\cdot\sqrt{t})\|_{\LL{p}}
& \leqslant \underset{t\to+\infty}{\limsup}\ \|t^2S(t)[(1-\chi_a)\wo](\cdot\sqrt{t})\|_{\LL{p}}
\\ &\qquad
+\ \underset{t\to+\infty}{\limsup}\ \|t^2S_1(t)[\chi_a\wo](\cdot\sqrt{t})\|_{\LL{p}}
\\ &\qquad
+\ \underset{t\to+\infty}{\limsup}\ \|(I_0^a-I_0)G_1\|_{\LL{p}}
\\ \rule[1.3em]{0pt}{0pt}&\hskip-0.4em \underset{\eqref{46}}{\leqslant} 
C\,\|r^2\wo(1-\chi_a)\|_{\LL{1}} + 0 + \|(I_0^a-I_0)G_1\|_{\LL{p}}
\end{align*}
which tends to zero as $a\to+\infty$.
\end{proof}

\subsection{Approximation of the semigroup at second order}

Let us suppose briefly that $I_0 = \int_\Omega r^2\wo(r,z)drdz$ vanishes. In this case $S_1(t) = S(t)$ for every $t>0$, and estimate \eqref{propS1r} taken with $\alpha=0$ and $q=p=1$ implies that the semigroup decreases in $\LL{1}$ at least like $t\mapsto t^{-\beta/2}$ when $(r,z)\mapsto (1+r^{\beta}+zr^{\beta-1})\wo(r,z)$ is in $\LL{1}$, if  $\beta\leqslant 3$. The case $\beta=3$ corresponds to the second term in the asymptotic approximation of $S$:
\begin{equation}\label{equivalentS2L1}
S(t)\wo \ \underset{t\to+\infty}{\sim} \ \frac{J_0rz}{32\sqrt{\pi}}e^{\mbox{$-\frac{r^2+z^2}{4t}$}}\, t^{-7/2}
\end{equation}
if $I_0=0$ but $J_0 = \int_\Omega r^2z\,\wo(r,z)\,drdz\neq0$. This is the object of lemma \ref{lemmaconvergenceS2} below. 

Let us introduce on $\Omega$ the Gaussian-like function
\begin{equation}\label{defG2}
G_2(r,z) = -\partial_zG_1(r,z) = \frac{rz}{32\sqrt{\pi}}e^{\mbox{$-\frac{r^2+z^2}{4}$}},
\end{equation}
and the notation
\begin{multline}\label{defS2}
S_2(t)\wo : (r,z) \mapsto S(t)\wo(r,z) - \left(\int_\Omega \rho^2\wo(\rho,\zeta)\,d\rho d\zeta\right)G_1\left(\frac{r}{\sqrt{t}},\frac{z}{\sqrt{t}}\right)t^{-2}\\
\rule[1.7em]{0pt}{0pt} - \left(\int_\Omega \rho^2\zeta\,\wo(\rho,\zeta)\,d\rho d\zeta\right)\,G_2\left(\frac{r}{\sqrt{t}},\frac{z}{\sqrt{t}}\right)t^{-5/2}
\end{multline}
defined when $\woo{(1+r^2+r^2|z|)}<\infty$ and $t>0$. Like $S_1$, the family of operators $(S_2(t))_{t>0}$ is not a semigroup nor is it strongly continuous at $t=0$ on any $\LL{p}$.

\refstepcounter{theorem}
\paragraph{Remark \thetheorem.}\label{rkproptyG1G2}
$G_1$ and $G_2$ defined by \eqref{defG1} and \eqref{defG2} are such that the first term \eqref{equivalentS1L1} and the second term \eqref{equivalentS2L1} in the asymptotic expansion of the semigroup are both self-similar solutions to the linearised vorticity equation
\begin{equation}\label{vorlin}
\partial_t\w = \Delta\w - \frac{\w}{r^2},
\end{equation}
meaning that the equalities
\begin{equation}\label{rkproptyG1}
S(t-s)\left[\frac{1}{s^2}\ G_1\!\left(\frac{\cdot}{\sqrt{s}}\right)\right](r,z) = \frac{1}{t^2}\ G_1\!\left(\frac{r}{\sqrt{t}},\frac{z}{\sqrt{t}}\right)
\end{equation}
and
\begin{equation}\label{rkproptyG2}
S(t-s)\left[\frac{1}{s^{5/2}}\ G_2\!\left(\frac{\cdot}{\sqrt{s}}\right)\right](r,z) = \frac{1}{t^{5/2}}\ G_2\!\left(\frac{r}{\sqrt{t}},\frac{z}{\sqrt{t}}\right)
\end{equation}
hold for all positive times $t\geqslant s>0$. Other notable properties are the values of their $r^2$ and $r^2z$-moments:
\begin{equation}\label{rkG1moment}
\int_\Omega r^2G_1(r,z)\,drdz = 1 \qquad\text{ and }\qquad \int_\Omega r^2z\,G_1(r,z)\,drdz = 0
\end{equation}
while
\begin{equation}\label{rkG2moment}
\int_\Omega r^2G_2(r,z)\,drdz = 0 \qquad\text{ and }\qquad \int_\Omega r^2z\,G_2(r,z)\,drdz = 1.
\end{equation}

\paragraph{}
Let us now see the estimates that hold for $(S_2(t))_{t>0}$, similar to the ones of proposition \ref{propS1} upon $S_1$. Once again, the constants in proposition \ref{propS2} can be shown independent of $p$, $q$, $\alpha$, $\beta$ and $\wo$.

\begin{proposition}\label{propS2}
Choose $1\leqslant p\leqslant q \leqslant \infty$ such that $p<2$ and $-1\leqslant\alpha\leqslant \beta \leqslant 2$.
\begin{itemize}
\item[(i)] If $3-\frac{2}{p}<\beta\leqslant2$
then there exists a positive constant $C$ such that for all time $t>0$
\begin{equation}\label{propS2r}
\|r^\alpha S_2(t)\wo\|_{\LL{q}} 
\leqslant \frac{C}{\frac{1}{p}+\frac{\beta+1}{2}-2}\, \|r^\beta(r^2+z^2)\,\wo\|_{\LL{p}} 
\times t^{-1+\frac{\alpha-\beta}{2}+\frac{1}{q}-\frac{1}{p}}
\end{equation}
provided that $(r,z)\mapsto r^2(1+|z|)\wo(r,z)\in\LL{1}$ and $(r,z)\mapsto r^\beta(r^2+z^2)\wo(r,z)\in\LL{p}$.
\item[(ii)] If $2-\frac{2}{p}<\beta\leqslant1$
then there exists a positive constant $C$ such that for all time $t>0$
\begin{equation}\label{propS2divr}
\|r^\alpha S_2(t)\dii w\|_{\LL{q}} 
\leqslant \frac{C}{\frac{1}{p}+\frac{\beta+2}{2}-2}\, \|r^\beta(r^2+z^2)\,w\|_{\LL{p}} 
\times t^{-\frac{3}{2}+\frac{\alpha-\beta}{2}+\frac{1}{q}-\frac{1}{p}}
\end{equation}
provided that $(r,z)\mapsto r^2(1+|z|)\dii w(r,z)\in\LL{1}$ and $(r,z)\mapsto r^\beta(r^2+z^2)w(r,z)\in(\LL{p})^2$.
\end{itemize}
\end{proposition}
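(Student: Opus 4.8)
The plan is to closely follow the proof of proposition \ref{propS1}, in the self-similar variables $(r\sqrt t,z\sqrt t)$, but to iterate the integral remainder of \eqref{demopropS1Taylor} one further time. Setting $G(\lambda)=K(r\rho\lambda)\,e^{-\frac14((r-\rho\lambda)^2+(z-\zeta\lambda)^2)}$, so that $K(\frac{r\rho}{\sqrt t})e^{-X(t)}=G(1/\sqrt t)$ and $G(0)=\frac{\sqrt\pi}{4}e^{-\frac14(r^2+z^2)}$, this yields the second-order expansion
\[
K\!\left(\frac{r\rho}{\sqrt t}\right)e^{-X(t)}=\frac{\sqrt\pi}{4}e^{-\frac14(r^2+z^2)}+G'(0)\,t^{-1/2}+\int_t^{+\infty}\!\!\int_s^{+\infty}\frac{G''(1/\sqrt\sigma)}{4\,s^{3/2}\sigma^{3/2}}\,d\sigma\,ds.
\]
Inserting this into the self-similar form of \eqref{defSt}, the zeroth- and first-order terms become explicit Gaussians and the double integral is the candidate expression for $r^\alpha S_2(t)\wo(r\sqrt t,z\sqrt t)$, up to the prefactor $\frac{r^{\alpha+1}}{4\pi t^2}\rho^2$.

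The step I expect to be the most delicate is checking that these two lowest-order terms coincide \emph{exactly} with the Gaussians subtracted in \eqref{defS2}, for only then is the double integral equal to $S_2(t)\wo$. The zeroth-order term integrates against $\frac{r}{4\pi t^2}\rho^2\wo$ to $I_0G_1(r,z)t^{-2}$, as in lemma \ref{lemmaconvergenceS}. The first-order coefficient is $G'(0)=\big[r\rho\,K'(0)+\frac{\sqrt\pi}{8}(r\rho+z\zeta)\big]e^{-\frac14(r^2+z^2)}$, which a priori carries a part proportional to $\rho$; integrated against $\rho^2\wo$ this would contribute the spurious moment $\int_\Omega\rho^3\wo$ at order $t^{-5/2}$ and invalidate the statement. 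The small-argument expansion of $K$ coming from \eqref{defKBessel} gives $K(0)=\frac{\sqrt\pi}{4}$ and, crucially, $K'(0)=-\frac{\sqrt\pi}{8}$, so the $\rho$-linear part cancels and $G'(0)=\frac{\sqrt\pi}{8}z\zeta\,e^{-\frac14(r^2+z^2)}$. Integrating this against $\frac{r}{4\pi t^{5/2}}\rho^2\wo$ then gives precisely $J_0G_2(r,z)t^{-5/2}$, with $G_2$ as in \eqref{defG2} and $J_0=\int_\Omega\rho^2\zeta\wo$, in agreement with \eqref{rkG2moment}. This cancellation is the structural reason why proposition \ref{propS2} holds.

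Once the remainder is identified, the estimates follow the same pattern as proposition \ref{propS1}. For part (ii) I would first integrate by parts to transfer $\dii$ onto the kernel, producing analogues $B_r,B_z$ of the quantities $A_r,A_z$ of \eqref{demopropS1ArAz}; because of the extra derivatives $\partial_\rho,\partial_\zeta$ acting on $K(\frac{r\rho}{\sqrt\sigma})$ these now involve $K'''$, which is exactly why proposition \ref{rkK} controls $K^{(i)}$ up to $i=3$. In both parts every summand of $r^{\alpha+1}\rho^2G''(1/\sqrt\sigma)$ (resp. of its divergence counterpart) is of the form $\frac{r^{\alpha+1+i}\rho^{1+j}}{\sigma^{(j-1)/2}}$ times a polynomial of degree two in $\rho$, $\zeta$, $r-\frac{\rho}{\sqrt\sigma}$ and $z-\frac{\zeta}{\sqrt\sigma}$ times $K^{(i)}(\frac{r\rho}{\sqrt\sigma})$. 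I would extract from that polynomial a weight dominated by $\rho^2+\zeta^2$, absorb the remaining translation factors into the Gaussian via \eqref{deltaexp}, and run the same three-regime discussion (according to whether $r\leqslant2\frac{\rho}{\sqrt\sigma}$, $2\frac{\rho}{\sqrt\sigma}\leqslant r$ or $2r\leqslant\frac{\rho}{\sqrt\sigma}$), reaching the pointwise bound
\[
\big|r^{\alpha+1}\rho^2G''(1/\sqrt\sigma)\big|\leqslant C\,\rho^\beta(\rho^2+\zeta^2)\,\sigma^{\frac{2-\beta}{2}}\,e^{-X(\sigma)/2};
\]
the divergence version of part (ii) is bounded identically but, just as in the passage from \eqref{demopropS1S1} to \eqref{demopropS1S1div}, its time kernel carries one extra negative half-power of $\sigma$.

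It then remains to integrate in time and space. By Fubini the inner estimate $\int_t^\sigma s^{-3/2}\,ds\leqslant 2\,t^{-1/2}$ frees an extra factor $t^{-1/2}$ compared with proposition \ref{propS1}, while the remaining integral $\int_t^{+\infty}\sigma^{-\frac{\beta+1}{2}+1-\frac1p}\,d\sigma$ for part (i) (respectively $\int_t^{+\infty}\sigma^{-\frac{\beta+2}{2}+1-\frac1p}\,d\sigma$ for part (ii)) converges precisely under the hypothesis $3-\frac2p<\beta$ (respectively $2-\frac2p<\beta$) and produces the constant $\big(\frac1p+\frac{\beta+1}{2}-2\big)^{-1}$ (respectively $\big(\frac1p+\frac{\beta+2}{2}-2\big)^{-1}$). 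Applying Young's inequality for convolution to the space integral and undoing the self-similar change of variables, which multiplies the $\LL q$-norm by $t^{\alpha/2+1/q}$, recombines all the powers of $t$ into $t^{-1+\frac{\alpha-\beta}{2}+\frac1q-\frac1p}$ and $t^{-\frac32+\frac{\alpha-\beta}{2}+\frac1q-\frac1p}$, which are exactly \eqref{propS2r} and \eqref{propS2divr}.
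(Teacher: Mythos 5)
Your proposal is correct and follows essentially the same route as the paper: a second-order Taylor expansion of $\lambda\mapsto K(r\rho\lambda)e^{-\frac14((r-\rho\lambda)^2+(z-\zeta\lambda)^2)}$ in the self-similar variables, with the integral remainder (your double integral is the paper's single remainder with the factor $\frac{1}{\sqrt t}-\frac{1}{\sqrt s}\leqslant\frac{1}{\sqrt t}$, by Fubini), followed by the same three-regime pointwise bounds, Fubini and Young. Your explicit verification that $K(0)=\frac{\sqrt\pi}{4}$ and $K'(0)=-\frac{\sqrt\pi}{8}$, so that the $r\rho$ contributions cancel and the first-order term reduces to $\frac{\sqrt\pi}{8}z\zeta\,e^{-\frac14(r^2+z^2)}t^{-1/2}$ matching \eqref{defS2}, is a correct and worthwhile check that the paper leaves implicit in stating \eqref{demopropS2Taylor}.
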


\begin{proof}
The proof is strictly identical to the one of proposition \ref{propS1} once taken into account the following modification and its consequences. The asymptotic expansion with integral remainder \eqref{demopropS1Taylor} has to be considered here at order two
\begin{equation}\label{demopropS2Taylor}
K\left(\frac{r\rho}{\sqrt{t}}\right) e^{-X(t)}
= \frac{\sqrt{\pi}}{4}e^{-\frac{1}{4}(r^2 + z^2)} + \frac{z\zeta\sqrt{\pi}}{8\sqrt{t}}e^{-\frac{1}{4}(r^2 + z^2)} + \int_t^{+\infty} \frac{1}{2s^{3/2}}\left(\frac{1}{\sqrt{t}}-\frac{1}{\sqrt{s}}\right) A_0\,e^{-X(s)}ds,
\end{equation}
$A_0 
$ being now the quantity
\[
\begin{array}{rl}
A_0 = \kern-0.6em
& r^2\rho^2 K''\!\left(\frac{r\rho}{\sqrt{s}}\right) + \dfrac{r\rho}{2}\left(\rho\left(r-\frac{\rho}{\sqrt{s}}\right) + \zeta\left(z-\frac{\zeta}{\sqrt{s}}\right)\right) K'\!\left(\frac{r\rho}{\sqrt{s}}\right)
\\ \el{1.7em} & \qquad 
+ \dfrac{1}{4} \left(\rho\left(r-\frac{\rho}{\sqrt{s}}\right) + \zeta\left(z-\frac{\zeta}{\sqrt{s}}\right)\right)^2 K\!\left(\frac{r\rho}{\sqrt{s}}\right) - \dfrac{1}{2}\left(\rho^2+\zeta^2\right) K\!\left(\frac{r\rho}{\sqrt{s}}\right)
\end{array}
\]
instead of \eqref{demopropS1A0}. Equalities \eqref{demopropS1S1} and \eqref{demopropS1S1div} change accordingly, meaning that their left hand side members involve here $S_2(t)$ instead of $S_1(t)$ and that the integrands have to be multiplied by the factor $(\frac{1}{\sqrt{t}}-\frac{1}{\sqrt{s}})$, which is less than $\frac{1}{\sqrt{t}}$. Last consequence to take into account, $A_r$ and $A_z$ such as defined in \eqref{demopropS1ArAz} and $r\rho^2 A_0$ are now sums of terms of the form
\[
\frac{r^{1+i}\rho^{1+j}}{s^{\frac{j-1}{2}}} \rho^k\left(r-\frac{\rho}{\sqrt{s}}\right)^{k'} \zeta^{2-k}\left(z-\frac{\zeta}{\sqrt{s}}\right)^{k''} K^{(i)}\left(\frac{r\rho}{\sqrt{s}}\right)
\]
with $j\in\{0,1\}$, $k\in\{0,1,2\}$, $i,k',k''\in\{0,1,2,3\}$, and must be bounded by $C s^{\frac{2-\beta}{2}}\rho^\beta(\rho^2+\zeta^2)e^{\frac{X(s)}{2}}$ instead of $C s^{\frac{2-\beta}{2}}\rho^\beta(\rho+|\zeta|)e^{\frac{X(s)}{2}}$. Using this bound inside of \eqref{demopropS2Taylor}, together with a Fubini integral permutation and Young's inequality for convolution, leads to the desired conclusion.
\end{proof}

We can now come back to \eqref{equivalentS2L1}, which is true in every $\LL{p}$, $1\leqslant p\leqslant\infty$.

\begin{lemma}\label{lemmaconvergenceS2}
Take $\wo\in\LL{1}$ such that $(r,z)\mapsto (r^3+r^2|z|)\wo(r,z)\in\LL{1}$. Remember that $(S(t))_{t\geqslant0}$ is the semigoup defined by \eqref{defSt}, $I_0= \int_\Omega r^2\wo(r,z)\,drdz$ and $J_0= \int_\Omega r^2z\,\wo(r,z)\,drdz$. Then
\begin{equation}\label{lemmaconvergenceS2convergence}
S(t)\wo = \frac{I_0r}{16\sqrt{\pi}}e^{\mbox{$-\frac{r^2+z^2}{4}$}}\ t^{-5/2} + \frac{J_0rz}{32\sqrt{\pi}}e^{\mbox{$-\frac{r^2+z^2}{4}$}}\ t^{-7/2} + \underset{t\to+\infty}{o}\left(t^{\frac{1}{p}-\frac{5}{2}}\right)
\end{equation}
in $\LL{p}$ for every $p\in[1,+\infty]$.
\end{lemma}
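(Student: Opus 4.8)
The plan is to adapt the proof of lemma~\ref{lemmaconvergenceS} to second order, again working in the self-similar variables $(r\sqrt t,z\sqrt t)$. Since $\|h\|_{\LL p}=t^{1/p}\,\|h(\cdot\sqrt t)\|_{\LL p}$ for any $h$, and since by \eqref{defS2} the remainder in \eqref{lemmaconvergenceS2convergence} is exactly $S_2(t)\wo=S(t)\wo-I_0G_1(\tfrac{\cdot}{\sqrt t})t^{-2}-J_0G_2(\tfrac{\cdot}{\sqrt t})t^{-5/2}$, the statement reduces to proving that
\[
\bigl\|\,t^{5/2}\,S_2(t)\wo(\cdot\sqrt t)\,\bigr\|_{\LL p}\xrightarrow[t\to+\infty]{}0\qquad\text{for every }p\in[1,+\infty].
\]
Note that $\wo\in\LL1$ and $(r^3+r^2|z|)\wo\in\LL1$ force $r^2\wo$ and $r^2|z|\wo$ to lie in $\LL1$ (split $r\geqslant1$ from $r<1$ for the former, bound directly for the latter), so $I_0$ and $J_0$ are well defined.

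First I would truncate: for $a>1$ let $\chi_a$ be the indicator of $[\tfrac1a,a]\times[-a,a]$ and set $I_0^a=\int_\Omega\rho^2\chi_a\wo\,d\rho d\zeta$ and $J_0^a=\int_\Omega\rho^2\zeta\,\chi_a\wo\,d\rho d\zeta$, which tend to $I_0$ and $J_0$ as $a\to+\infty$ by dominated convergence. The key step, and the place where second order genuinely differs from first order, is the decomposition one chooses. Splitting $S(t)\wo=S(t)[\chi_a\wo]+S(t)[(1-\chi_a)\wo]$ and the moments as $I_0=I_0^a+(I_0-I_0^a)$, $J_0=J_0^a+(J_0-J_0^a)$, a direct rearrangement using the definitions \eqref{defS1} and \eqref{defS2} yields
\[
S_2(t)\wo=S_2(t)[\chi_a\wo]+S_1(t)[(1-\chi_a)\wo]-(J_0-J_0^a)\,G_2\!\bigl(\tfrac{\cdot}{\sqrt t}\bigr)\,t^{-5/2}.
\]
The essential point is that the complement $(1-\chi_a)\wo$ is fed into $S_1$ rather than into the bare semigroup $S$.

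The main obstacle is precisely to see why this $S_1$ is necessary. Had one proceeded as in lemma~\ref{lemmaconvergenceS} and kept $S(t)[(1-\chi_a)\wo]$ raw, the best time-homogeneous control \eqref{46} (with $\alpha=0$, $\beta=2$ and data in $\LL1$) would only give $\|S(t)[(1-\chi_a)\wo]\|_{\LL p}\leqslant C\,\woo{r^2(1-\chi_a)}\,t^{-2+1/p}$; multiplied by the required factor $t^{5/2-1/p}$ this leaves $t^{1/2}$, which blows up, because the complement still carries the whole first-order Gaussian. Subtracting that Gaussian, i.e. using $S_1$, removes this obstruction and leaves only the second-order leftover $(J_0-J_0^a)G_2$, whose scaling $t^{5/2}t^{-5/2}=1$ is harmless. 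This is the one structural idea beyond the first-order argument.

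It then remains to estimate, after multiplying by $t^{5/2}$ and evaluating at $(r\sqrt t,z\sqrt t)$, the three resulting terms in $\LL p$. For the truncated term, estimate \eqref{propS2r} with $\alpha=0$, $\beta=2$ and data in $\LL1$ (the denominator $\tfrac1p+\tfrac{\beta+1}{2}-2=\tfrac12$ being positive) gives $t^{5/2-1/p}\|S_2(t)[\chi_a\wo]\|_{\LL p}\leqslant C\,\woo{r^2(r^2+z^2)\chi_a}\,t^{-1/2}\to0$, the weighted norm being finite since $\chi_a\wo$ is compactly supported away from $r=0$. For the complement term, estimate \eqref{propS1r} with $\alpha=0$, $\beta=2$ (legitimate at data-exponent $1$, where $3-2=1<2=\beta$) and noting $r^2(r+|z|)=r^3+r^2|z|$ gives $t^{5/2-1/p}\|S_1(t)[(1-\chi_a)\wo]\|_{\LL p}\leqslant C\,\woo{(r^3+r^2|z|)(1-\chi_a)}$, which is bounded in $t$ and tends to $0$ as $a\to+\infty$ by dominated convergence. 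For the last term, $\|(J_0-J_0^a)G_2\|_{\LL p}=|J_0-J_0^a|\,\|G_2\|_{\LL p}\to0$ as $a\to+\infty$. Taking $\limsup_{t\to+\infty}$ and then letting $a\to+\infty$ makes all three contributions vanish, which proves the claim and hence the lemma after returning to the original variables.
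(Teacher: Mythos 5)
Your proof is correct and follows essentially the same route as the paper: the identical truncation by $\chi_a$, the identical decomposition $S_2(t)\wo=S_2(t)[\chi_a\wo]+S_1(t)[(1-\chi_a)\wo]+(J_0^a-J_0)G_2(\tfrac{\cdot}{\sqrt t})t^{-5/2}$, the same applications of \eqref{propS2r} (with $\alpha=0$, $\beta=2$) to the truncated part and \eqref{propS1r} to the complement, and the same $\limsup$-then-$a\to+\infty$ conclusion. The extra paragraph explaining why the complement must be fed into $S_1$ rather than $S$ is a correct and accurate account of the structural point.
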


\begin{proof}
The goal is to prove that $\|t^{5/2}S_2(t)\wo(\cdot\sqrt{t})\|_{\LL{p}}$ tends to zero when $t$ goes to infinity for every $p$ in $[1,+\infty]$. Let us recall the definitions of $G_1$, $S_1$, $G_2$ and $S_2$, respectively in \eqref{defG1}, \eqref{defS1}, \eqref{defG2} and \eqref{defS2}.

Let us consider for any $a>1$ the indicator function $\chi_a$ of the truncated subset $[\frac{1}{a},a]\times[-a,a]$ of $\Omega$, and the quantity $J_0^a = \int_{1/a}^a\int_{-a}^a \rho^2\zeta\,\wo(\rho,\zeta)\,d\rho d\zeta = \int_\Omega\rho^2\zeta\,\chi_a\wo d\rho d\zeta$ which tends towards $J_0$ as $a$ goes to infinity. Let us use \eqref{propS1r} with $\alpha=0$ and $\beta=2$ to show that
\[
\|t^{\frac{5}{2}}S_1(t)[(1-\chi_a)\wo](\cdot\sqrt{t})\|_{\LL{p}} = t^{\frac{5}{2}-\frac{1}{p}}\|S_1(t)[(1-\chi_a)\wo]\|_{\LL{p}} \leqslant 2C \woo{r^2(r+|z|)(1-\chi_a)}
\]
for any positive time $t$, and use \eqref{propS2r} with $\alpha=0$ and $\beta=2$ to see that
\[
\begin{array}{ll}
\|t^{\frac{5}{2}}S_2(t)[\chi_a\wo](\cdot\sqrt{t})\|_{\LL{p}}
\kern-0.6em &
= t^{\frac{5}{2}-\frac{1}{p}}\|S_2(t)[\chi_a\wo]\|_{\LL{p}}
\\\rule[1.3em]{0pt}{0pt} & \hspace{-0.7em}\underset{\eqref{propS2r}}{\leqslant} 
2C \woo{r^2(r^2+z^2)\chi_a}\times t^{-\frac{1}{2}} \xrightarrow[t\to+\infty]{} 0.
\end{array}
\]
Let us write eventually for every $a>1$
\[
t^{\frac{5}{2}}S_2(t)\wo(r\sqrt{t},z\sqrt{t})
= t^{\frac{5}{2}}S_1(t)[(1-\chi_a)\wo](r\sqrt{t},z\sqrt{t})
+ t^{\frac{5}{2}}S_2(t)[\chi_a\wo](r\sqrt{t},z\sqrt{t})
+ (J_0^a-J_0)G_2(r,z)
\]
so
\begin{align*}
\underset{t\to+\infty}{\limsup}\ \|t^{\frac{5}{2}}S_2(t)\wo(\cdot\sqrt{t})\|_{\LL{p}}
& \leqslant \underset{t\to+\infty}{\limsup}\ \|t^{\frac{5}{2}}S_1(t)[(1-\chi_a)\wo](\cdot\sqrt{t})\|_{\LL{p}}
\\ &\qquad
+\ \underset{t\to+\infty}{\limsup}\ \|t^{\frac{5}{2}}S_2(t)[\chi_a\wo](\cdot\sqrt{t})\|_{\LL{p}}
\\ &\qquad
+\ \underset{t\to+\infty}{\limsup}\ \|(J_0^a-J_0)G_2\|_{\LL{p}}
\\ \rule[1.3em]{0pt}{0pt}&\hskip-0.6em \underset{\eqref{propS1r}}{\leqslant} 
 C\,\|r^2(r+|z|)\wo(1-\chi_a)\|_{\LL{1}}
+\,0 + \|(J_0^a-J_0)G_2\|_{\LL{p}}
\end{align*}
which tends to zero as $a\to+\infty$, concluding the proof.
\end{proof}

\section{Time-asymptotic expansion of the vorticity}\label{SectionAsymptoticExpansion}

In \cite{Gallay1}, Gallay and \v Sver\'ak show important results about the axisymmetric vorticity when the swirl is zero: for any nontrivial initial data $\wo\in\LL{1}$, the $L^1$-norm of the vorticity $t\mapsto\wwt{}$ decreases to zero and for every $p$ in $]1,+\infty]$
\begin{equation}\label{73}
\underset{t\to0}{\lim}\ t^{1-\frac{1}{p}} \|\w(t)\|_{\LL{p}} = \underset{t\to+\infty}{\lim}\ t^{1-\frac{1}{p}} \|\w(t)\|_{\LL{p}} = 0.
\end{equation}
As we said in the introduction, they also show that when the initial data $\wo$ is nonnegative and has a finite impulse $I_0 = \int_\Omega r^2\wo(r,z)drdz <\infty$, the vorticity has the same first-order asymptotic expansion as the linear semigroup $S$.

The goal of this section is to refine these results by reinforcing gradually the hypotheses on the localisation of the initial data, proving in particular that the nonnegativity of $\wo$ is never required. Let us start by introducing the Biot-Savart law for axisymmetric fluids, which gives crucial estimates between $u$ and $\w$.

\subsection{The Axisymmetric Biot-Savart law}
\label{sectionBiotSavart}

As stated in the introductory section, working with the vorticity or directly with the velocity is equivalent given that to any solution $\omega$ of the vorticity equation \eqref{voraxi} corresponds only one solution $u$ to the Navier-Stokes equations \eqref{NSdimensionless} whose curl is $\omega$. In the context of axisymmetry without swirl, the Biot-Savart law can be written as
\begin{equation}\label{BSaxi}
\begin{array}{l}
u_r = -\frac{1}{r}\partial_z\psi
\\ \el{1.3em}
u_\theta = 0
\\ \el{1.3em}
u_z\ = \frac{1}{r}\partial_r\psi
\end{array}
\quad \text{ where }\quad
\psi(r,z) = \frac{1}{4\pi}\int_\Omega\int_0^{2\pi} \frac{r\rho\cos\theta\,\w}{\sqrt{(z-\zeta)^2 + r^2 + \rho^2 - 2r\rho\cos\theta}}d\theta d\rho d\zeta,
\end{equation}
see for example \cite{FukMoff00}. The reader will then find the following proposition in \cite{Gallay1}.

\begin{proposition}\label{propBS}
Consider $u$ defined from $\w$ by the axisymmetric Biot-Savart law \eqref{BSaxi} and choose $1\leqslant p<q \leqslant \infty$.
\begin{itemize}
\item[(i)] If $1<p<q<\infty$ such that $\frac{1}{p} = \frac{1}{q} + \frac{1}{2}$ and $\w\in\LL{p}$ then there exists a positive constant $C$ such that for all time $t\geqslant0$
\begin{equation}\label{25}
\|u(t)\|_{\LL{q}} \leqslant C \wwtp{}{p}.
\end{equation}
More generally, if $1<p<q<\infty$ and $0\leqslant\alpha\leqslant\beta\leqslant2$ such that $0\leqslant\beta-\alpha<1$ and $\frac{1}{p} = \frac{1}{q} + \frac{1+\alpha-\beta}{2}$ and if $(r,z) \mapsto r^\beta\w(r,z)\in\LL{p}$ then there exists a positive constant $C$ such that for all time $t\geqslant0$
\begin{equation}\label{28}
\|r^\alpha u(t)\|_{\LL{q}} \leqslant C \wwtp{r^\beta}{p}.
\end{equation}
\item[(ii)] If $1\leqslant p<2<q \leqslant \infty$ and $\w\in\LL{p}\cap\LL{q}$ then there exists a positive constant $C$ such that for all time $t\geqslant0$
\begin{equation}\label{26}
\uut \leqslant C \wwtp{}{p}^\sigma\wwtp{}{q}^{1-\sigma} \qquad\text{where } \sigma = \frac{p}{2}\frac{q-2}{q-p} \in\ ]0,1[\,.
\end{equation}
\end{itemize}
\end{proposition}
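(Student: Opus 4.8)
The plan is to realise $u$ as a linear integral operator acting on $\w$ and to reduce the three estimates to pointwise bounds on its kernel, which should mimic the two-dimensional Biot--Savart kernel. First I would differentiate the stream function of \eqref{BSaxi}: writing $u_r=-\frac1r\partial_z\psi$, $u_z=\frac1r\partial_r\psi$ and carrying the $r$-derivatives inside the $\theta\rho\zeta$-integral produces
\[
u(r,z)=\int_\Omega H(r,z,\rho,\zeta)\,\w(\rho,\zeta)\,d\rho d\zeta,
\]
where $H=H_re_r+H_ze_z$ is an explicit kernel built from the angular integral $\int_{-\pi}^{\pi}\cos\theta\,[(z-\zeta)^2+r^2+\rho^2-2r\rho\cos\theta]^{-1/2}\,d\theta$ and its first $r$- and $z$-derivatives (equivalently, from complete elliptic integrals). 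All of \eqref{25}, \eqref{28} and \eqref{26} then follow from good bounds on $H$.

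The core step, and the main obstacle, is the pointwise kernel estimate. Setting $d=\sqrt{(r-\rho)^2+(z-\zeta)^2}$, I would prove that for $0\le\alpha\le\beta\le2$ with $\beta-\alpha<1$,
\[
r^\alpha\,|H(r,z,\rho,\zeta)|\ \le\ \frac{C\,\rho^\beta}{d^{\,1+\beta-\alpha}},
\]
which reduces to $|H|\le C/d$ in the unweighted case $\alpha=\beta=0$, exactly the homogeneity of the planar Biot--Savart kernel. To establish it one splits into the near-diagonal region $d\le\tfrac12\max(r,\rho)$, where $r\simeq\rho$ so the weight ratio is harmless and the logarithmic singularity of the elliptic integral yields $|H|\simeq 1/d$ after differentiation, and the complementary region, where the degenerate regimes $\rho\ll r$ or $r\ll\rho$ near the axis occur: there the oddness of $\cos\theta$ forces an extra cancellation that makes $H$ decay like $r\rho^2/d^4$, far more than enough to absorb $r^\alpha/\rho^\beta$ using $\max(r,\rho)\lesssim d$. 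The delicate bookkeeping of the elliptic integrals on the diagonal and near the axis is where the real work sits; this is essentially the kernel analysis carried out in \cite{Gallay1}.

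Granting the kernel bound, parts \eqref{25} and \eqref{28} are immediate. Indeed $r^\alpha u=\int (r^\alpha\rho^{-\beta}H)\,(\rho^\beta\w)\,d\rho d\zeta$ and $r^\alpha\rho^{-\beta}|H|\le C\,d^{-(1+\beta-\alpha)}$, so the Hardy--Littlewood--Sobolev inequality on $\RR^2$ with exponent $\lambda=1+\beta-\alpha\in[1,2)$ gives $\|r^\alpha u(t)\|_{\LL{q}}\le C\,\wwtp{r^\beta}{}$ precisely under $\frac1p=\frac1q+\frac{2-\lambda}{2}=\frac1q+\frac{1+\alpha-\beta}{2}$, the open-endpoint restriction of HLS forcing $1<p<q<\infty$. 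The relation $\frac1p=\frac1q+\frac12$ of \eqref{25} is the case $\lambda=1$.

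For the $L^\infty$ bound \eqref{26} I would replace HLS by a truncation-and-optimisation argument, which tolerates the endpoints $p=1$, $q=\infty$. From $|u(r,z)|\le C\int_\Omega d^{-1}|\w(\rho,\zeta)|\,d\rho d\zeta$, split the integral at $d=R$ and apply Hölder on each piece: since $d^{-1}\in L^{q'}(\{d<R\})$ for $q>2$ and $d^{-1}\in L^{p'}(\{d>R\})$ for $p<2$, one obtains $|u|\le C\big(R^{1-2/q}\,\wwtp{}{q}+R^{1-2/p}\,\wwtp{}{p}\big)$. Optimising over $R>0$ (the two terms balance at $R^{2/p-2/q}\simeq\wwtp{}{p}/\wwtp{}{q}$) yields $\uut\le C\,\wwtp{}{p}^{\sigma}\wwtp{}{q}^{1-\sigma}$ with $\sigma=\frac{1-2/q}{2/p-2/q}=\frac{p}{2}\,\frac{q-2}{q-p}$, as claimed.
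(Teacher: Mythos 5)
The paper does not prove this proposition itself but imports it from \cite{Gallay1}, and your sketch reconstructs exactly the argument used there: pointwise bounds on the Biot--Savart kernel with the correct near-diagonal $1/d$ singularity and sufficient off-diagonal decay, followed by the Hardy--Littlewood--Sobolev inequality for \eqref{25} and \eqref{28} and a split-and-optimise interpolation for \eqref{26}. Your exponent bookkeeping is correct (the HLS relation $\frac{1}{p}=\frac{1}{q}+\frac{2-\lambda}{2}$ with $\lambda=1+\beta-\alpha$, and $\sigma=\frac{p}{2}\frac{q-2}{q-p}$ from balancing the two truncated pieces); the only step you do not carry out is the kernel estimate itself, which you rightly identify as the substantive content of \cite{Gallay1}.
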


\paragraph{Remark.}
The combination of \eqref{73} and \eqref{26} ensures the existence of two positive constants $C$  and $C_{\wo}$ such that
\begin{equation}\label{uLinftyw}
\uut \leqslant C \wwt{}^{\frac{1}{2}} \left(C_{\wo}\, t^{-1}\right)^{\frac{1}{2}} \qquad \forall t>0,
\end{equation}
and the fact that $t\mapsto\wwt{}$ is nonincreasing gives the existence of a positive constant $C_{\wo}$ such that
\begin{equation}\label{uLinfty}
\uut \leqslant C_{\wo}\,t^{-\frac{1}{2}} \qquad \forall t>0.
\end{equation}
This bound will be useful in what comes next, and already tells something about how $u$ vanishes at infinity.

\subsection{Decay rates for the vorticity}
\label{sectiondecayrates}

Let us show first that a finite initial impulse $\woo{r^2}<\infty$ implies that the second radial moment $\wwt{r^2}$ is bounded and that $\wwtp{}{p}$ decreases like $t\mapsto t^{\frac{1}{p}-2}$; this is lemma \ref{lemmamajorationsr2} below. 
This result requires the intermediate study of the vorticity's decay rates when only its first radial moment $\woo{r}$ is initially supposed finite.

\begin{lemma}\label{lemmamajorationsr}
Take $\w$ the solution of \eqref{vor} with initial condition $\wo\in\LL{1}$.\\
If $(r,z)\mapsto r\wo(r,z)$ is in $\LL{1}$ then there exists a positive constant $C_{\wo}$ such that for every $q\in[1,+\infty]$ and all time $t>0$
\begin{align}
\label{lemmamajorationsrw}
\wwtp{}{q} & \leqslant C_{\wo}\,(1+t)^{-\frac{1}{2}}t^{\frac{1}{q}-1}
\\ \label{lemmamajorationsrrw}
\wwtp{r}{q} & \leqslant C_{\wo}\,t^{\frac{1}{q}-1}. \el{1.3em}
\end{align}
\end{lemma}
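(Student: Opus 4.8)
The plan is to work with the Duhamel formula \eqref{duhamel} and control both the linear term $S(t)\wo$ and the nonlinear term $\int_0^t S(t-s)\dii(u\w)\,ds$ using the semigroup estimates from proposition \ref{prop4647}, exploiting the borderline case $\beta=1$ of \eqref{47} together with the velocity bound \eqref{uLinfty}. The key difficulty is that the two moments $\wwtp{}{q}$ and $\wwtp{r}{q}$ are coupled through the nonlinearity: bounding one requires knowing the other, so I expect to close the argument via a Grönwall-type bootstrap. I would first establish \eqref{lemmamajorationsrrw}, the bound on the first radial moment, since the factor $r$ in $\dii(u\w)$ interacts favorably with the $\beta=1$ estimate, and then deduce \eqref{lemmamajorationsrw}.

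For the linear part, proposition \ref{prop4647}(i) with $\alpha=1,\beta=1$ gives $\wwtp{r}{q}\lesssim \|r\wo\|_{\LL{1}}\,t^{\frac1q-1}$ directly, and with $\alpha=0,\beta=1$ gives $\|S(t)\wo\|_{\LL{q}}\lesssim \|r\wo\|_{\LL{1}}\,t^{\frac1q-\frac32}$, which is already consistent with the desired $(1+t)^{-1/2}t^{1/q-1}$ for large $t$. For the nonlinear part I would apply \eqref{47} to $S(t-s)\dii(u(s)\w(s))$ with $\beta=1$ (so $w=u\w$ carries an $r$-weight $r\,u\w$), using that $\|r\,u(s)\w(s)\|_{\LL{p}}\leqslant \uus\,\wwsp{r}{p}$, and then inserting $\uus\lesssim t^{-1/2}$ from \eqref{uLinfty}. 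The exponents must be chosen so that the time integral $\int_0^t (t-s)^{-\text{(something)}}\,s^{-1/2}\,\wwsp{r}{p}\,ds$ converges near both endpoints; a standard way is to take $p$ close to $1$ and $q$ arbitrary, and to feed in an a priori bound of the expected form $\wwsp{r}{p}\leqslant C_{\wo}s^{\frac1p-1}$ to make the integral self-consistent.

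Concretely, I would set up the bootstrap on $M(t)=\sup_{0<s\leqslant t} s^{1-\frac1q}\wwsp{r}{q}$ (and a companion quantity for \eqref{lemmamajorationsrw}), show from Duhamel that $M(t)$ is bounded by the linear contribution plus a term of the form $C\,M(t)\int_0^t(t-s)^{-a}s^{-b}\,ds$ with the Beta-function integral finite, and absorb the $M(t)$ on the right provided the constant is small or the integral is handled on a short interval; otherwise I would iterate using the uniqueness relation \eqref{duhameluni} from a time $t_0>0$. The main obstacle is precisely ensuring integrability of the convolution kernel at $s\to0$ and $s\to t$ simultaneously while keeping all exponents in the admissible ranges $-1\leqslant\alpha\leqslant\beta\leqslant1$ of \eqref{47}; the borderline $\beta=1$ is exactly the limiting case the text flags, so I would verify that the relevant powers of $s$ give a convergent (Beta) integral and that no logarithmic divergence appears. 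Once \eqref{lemmamajorationsrrw} is secured, \eqref{lemmamajorationsrw} follows by the same Duhamel estimate with $\alpha=0$, now using the already-established control of $\wwsp{r}{p}$ inside the nonlinear term, the extra half-power of decay coming from the $\beta=1,\alpha=0$ gap in the time exponent of \eqref{47}.
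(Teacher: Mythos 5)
Your overall strategy (Duhamel, the borderline cases of proposition \ref{prop4647}, the velocity bound, and a Grönwall-type bootstrap on weighted norms) is the same as the paper's, but the two key quantitative steps fail as you have set them up. The first gap is the absorption step for the uniform bound on $\wwt{r}$. With $\uus\leqslant C\,s^{-1/2}$ from \eqref{uLinfty} and the kernel $(t-s)^{-1/2}$ from \eqref{47} with $\alpha=\beta=1$, the convolution integral is $\int_0^t(t-s)^{-1/2}s^{-1/2}\,ds=B(1/2,1/2)=\pi$ for \emph{every} $t$: it is scale-invariant, so neither ``small constant'' nor ``short interval'' nor restarting from $t_0$ makes the coefficient of your $M(t)$ less than one. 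The paper's mechanism is different and essential: it replaces \eqref{uLinfty} by the sharper $\uus\leqslant C\wws{}^{1/2}s^{-1/2}$ from \eqref{uLinftyw}, invokes the known fact that $\wwt{}$ decreases to zero, and chooses $t_0$ so large that $\wwtt{}{t_0}\leqslant(2C_2\pi)^{-2}$; only then can $\sup_{[t_0,t_0+t]}\|r\w\|_{\LL{1}}$ be absorbed with a factor $\tfrac12$. (This also presupposes a preliminary fixed-point argument in $L^1_1(\Omega)$ and a singular Grönwall lemma to know that $\wwt{r}$ is finite up to time $t_0$ at all --- a point your plan does not address.)

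The second gap is your route to \eqref{lemmamajorationsrw}: applying \eqref{47} with $\alpha=0$, $\beta=1$, $p=q=1$ produces the kernel $(t-s)^{-1}$, and $\int_0^t(t-s)^{-1}s^{-1/2}\,ds$ diverges logarithmically at $s=t$; the ``extra half-power from the $\beta=1$, $\alpha=0$ gap'' is precisely what destroys integrability of the convolution. The paper circumvents this with a two-step iteration through intermediate weights: first $\beta=\tfrac12$ (kernel $(t-s)^{-3/4}$, yielding $\wwt{}\leqslant Ct^{-1/4}$), then $\beta=\tfrac34$ combined with $\uus\leqslant C\wws{}^{1/2}s^{-1/2}\leqslant Cs^{-5/8}$, which gives the convergent integral $B(1/8,3/8)$ and the decay $t^{-1/2}$. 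Both repairs rest on the same idea missing from your proposal: re-injecting the partial decay of $\wwt{}$ into the velocity estimate via \eqref{uLinftyw} instead of using the crude bound \eqref{uLinfty}.
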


\begin{proof}
Suppose that $\woo{r}<\infty$. We show first the local existence of $\wwt{r}$, then the fact that it does not blow up in finite time and eventually that it is bounded on $[0,+\infty[$. We shall come afterwards to inequalities \eqref{lemmamajorationsrw} and \eqref{lemmamajorationsrrw}.

First of all, a fixed point argument shows that there exists a time $T>0$ such that $\w$ is continuous from $[0,T]$ into the space
\[
L^1_1(\Omega) = \{w:\Omega\to\RR \mid \|w\|_{\LL{1}} + \|rw\|_{\LL{1}} <\infty\}.
\]
This standard argument will be used several times in the next lemmas, we thus provide some details in appendix \ref{annexeprop4.1} for the reader's convenience. 

We then expose why $\wwt{r}$ does not blow up in finite time, using a Grönwall argument. Let us write for $t\geqslant0$ and $t_0>0$
\begin{align}
&\wwtt{r}{t_0+t}
\notag\\ \rule{0pt}{0pt}& \hskip5em\displaystyle \underset{\eqref{duhameluni}}{\leqslant} 
\|rS(t)\w(t_0)\|_{\LL{1}} + \int_{t_0}^{t_0+t}\|rS(t_0+t-s)\dii(u(s)\w(s))\|_{\LL{1}}\,ds
\notag\\ \rule[2.2em]{0pt}{0pt}& \hskip4em\displaystyle \underset{\eqref{46}\eqref{47}}{\leqslant} 
C\wwtt{r}{t_0} + C\int_{t_0}^{t_0+t}(t_0+t-s)^{-\frac{1}{2}}\uus\wws{r}\,ds
\label{local4}\\ \el{2.2em}& \hskip5em\displaystyle \underset{\eqref{uLinfty}}{\leqslant} 
C\wwtt{r}{t_0} + C\int_0^t(t-s)^{-\frac{1}{2}}(t_0+s)^{-\frac{1}{2}}\wwtt{r}{t_0+s}\,ds \notag 
\end{align}
and conclude using some Grönwall-type lemma that $\wwt{r}$ is finite and well defined for every time $t$ in $[0,+\infty[$. For example, lemma 7.1.1 from \cite{DanHenry} gives exactly the desired conclusion if we observe that $(t_0+s)^{-\frac{1}{2}}$ is bounded by the constant $t_0^{-\frac{1}{2}}$.

Let us now come back to line \eqref{local4} and give a bound to $\uus$ by using inequality \eqref{uLinftyw} instead of \eqref{uLinfty}, then $\uus\leqslant C\wws{}^{\frac{1}{2}}s^{-\frac{1}{2}}$ and inequality \eqref{local4} becomes
\[
\wwtt{r}{t_0+t} \leqslant C\wwtt{r}{t_0} + C_2\int_{t_0}^{t_0+t}(t_0+t-s)^{-\frac{1}{2}}\wws{}^{\frac{1}{2}}s^{-\frac{1}{2}}\wws{r}\,ds
\]
for any $t\geqslant0$, $C_2$ being some positive constant depending on $\woo{}$. Let us remember here that $t\mapsto\wwt{}$ decreases to zero and consider $t_0$ large enough so that $\wwtt{}{t_0} \leqslant (2C_2\pi)^{-2}$; the presence of $\pi$ being due to the particular value of the Euler Beta function $B(\frac{1}{2},\frac{1}{2}) = \int_0^1(1-s)^{-\frac{1}{2}}s^{-\frac{1}{2}}ds = \pi$ appearing right below. Indeed, for every $t>0$ we can now write 
\begin{align*}
&\wwtt{r}{t_0+t}
\\& \hskip3em \leqslant 
C\wwtt{r}{t_0} + C_2\,\wwtt{}{t_0}^{\frac{1}{2}} \int_0^{t_0+t}(t_0+t-s)^{-\frac{1}{2}}s^{-\frac{1}{2}}\,ds \left(\underset{[t_0,t_0+t]}{\sup}\|r\w\|_{\LL{1}}\right)
\\& \hskip3em 
\leqslant C\wwtt{r}{t_0} + \frac{1}{2}\ \underset{[t_0,t_0+t]}{\sup}\|r\w\|_{\LL{1}}
\end{align*}
and so $\sup_{[t_0,t_0+t]}\|r\w\|_{\LL{1}} \leqslant C + \frac{1}{2}\sup_{[t_0,t_0+t]}\|r\w\|_{\LL{1}}$. This proves that there exists a positive constant $C_{\wo}$ such that $\wwt{r} \leqslant C_{\wo}$ for any $t$ in $[0,+\infty[$.

We can now prove inequalities \eqref{lemmamajorationsrw} and \eqref{lemmamajorationsrrw}, which are consequences of this uniform bound on $\wwt{r}$ as shown below. To establish \eqref{lemmamajorationsrw}, let us recall first that $\wwtp{}{q} \leqslant C_{\wo}\,t^{\frac{1}{q}-1}$ as a consequence of \eqref{73}. Referring then to \eqref{46} and \eqref{47} with $\alpha=0$ and $\beta=\frac{1}{2}$, let us write for $t>0$
\[
\begin{array}{ll}
\wwt{}
&\hskip-0.6em\displaystyle \underset{\eqref{duhamel}}{\leqslant} 
\|S(t)\wo\|_{\LL{1}} + \int_0^t\|S(t-s)\dii(u(s)\w(s))\|_{\LL{1}}\,ds
\\ \el{2.2em} &\hskip-1.6em\displaystyle \underset{\eqref{46}\eqref{47}}{\leqslant} 
C\woo{r^{\frac{1}{2}}}t^{-\frac{1}{4}} + C\int_0^t(t-s)^{-\frac{3}{4}}\uus\wws{r^{\frac{1}{2}}}\,ds
\\ \el{2.2em} &\hskip-0.6em\displaystyle \underset{\eqref{uLinfty}}{\leqslant} 
Ct^{-\frac{1}{4}} + C\int_0^t(t-s)^{-\frac{3}{4}}s^{-\frac{1}{2}}\,ds
\quad =\ C(1 + B(1/4,1/2))\,t^{-\frac{1}{4}}.
\end{array}
\]
Use straightaway the decay rate obtained this way on $\wwt{}$ in the same calculation (referring this time to \eqref{46} with $\alpha=0$, $\beta=1$ and to \eqref{47} with $\alpha=0$, $\beta=\frac{3}{4}$),
\begin{align*}
\wwt{}
&\underset{\eqref{duhamel}}{\leqslant} 
\|S(t)\wo\|_{\LL{1}} + \int_0^t\|S(t-s)\dii(u(s)\w(s))\|_{\LL{1}}\,ds
\\ \el{2.2em} &\hskip-1em \underset{\eqref{46}\eqref{47}}{\leqslant} 
C\woo{r}t^{-\frac{1}{2}} + C\int_0^t(t-s)^{-\frac{7}{8}}\uus\wws{r^{\frac{3}{4}}}\,ds
\\ \el{2.2em} &\underset{\eqref{uLinftyw}}{\leqslant} 
Ct^{-\frac{1}{2}} + C\int_0^t(t-s)^{-\frac{7}{8}}\wws{}^{\frac{1}{2}}s^{-\frac{1}{2}}\wws{r^{\frac{3}{4}}}\,ds
\\ \el{2.2em} &\hskip0.4em 
\leqslant Ct^{-\frac{1}{2}} + C\int_0^t(t-s)^{-\frac{7}{8}}s^{-\frac{1}{8}}s^{-\frac{1}{2}}\,ds
\quad =\ C(1 + B(1/8,3/8))\,t^{-\frac{1}{2}},
\end{align*}
to get the case $q=1$ of \eqref{lemmamajorationsrw}. Finally, another iterative argument gives all of the remaining cases in three steps. Let us proceed to the following calculations three times, successively for $q\in[1,2[$, $p=1$, then $q\in[\frac{4}{3},4[$, $p=\frac{4}{3}$ and at last $q\in[3,+\infty]$, $p=3$:
\begin{align*}
\wwtp{}{q}
& \underset{\eqref{duhameluni}}{\leqslant} 
\|S(t/2)\w(t/2)\|_{\LL{q}} + \int_{t/2}^t\|S(t-s)\dii(u(s)\w(s))\|_{\LL{q}}\,ds
\\ \el{2.2em} &\hskip-1em \underset{\eqref{46}\eqref{47}}{\leqslant} 
C\wwtt{r}{t/2}t^{\frac{1}{q}-\frac{3}{2}} + C\int_{t/2}^t(t-s)^{\frac{1}{q}-\frac{1}{p}-\frac{1}{2}}\uus\wwsp{}{p}\,ds
\\ \el{2.2em} & \underset{\eqref{uLinfty}}{\leqslant} 
Ct^{\frac{1}{q}-\frac{3}{2}} + C\int_{t/2}^t(t-s)^{\frac{1}{q}-\frac{1}{p}-\frac{1}{2}}s^{-\frac{1}{2}}\wwsp{}{p}\,ds
\\ \el{2.2em} &\hskip0.4em 
\leqslant Ct^{\frac{1}{q}-\frac{3}{2}} + C\int_{t/2}^t(t-s)^{\frac{1}{q}-\frac{1}{p}-\frac{1}{2}}s^{-\frac{1}{2}}s^{\frac{1}{p}-\frac{3}{2}}\,ds
\\ \el{2.2em} &\hskip0.4em 
\leqslant C\left(1 + \int_{1/2}^{1}(1-s)^{\frac{1}{q}-\frac{1}{p}-\frac{1}{2}}s^{\frac{1}{p}-2}ds\right)t^{\frac{1}{q}-\frac{3}{2}}
\end{align*}
and
\begin{align*}
\wwtp{r}{q}
& \underset{\eqref{duhameluni}}{\leqslant} 
\|rS(t/2)\w(t/2)\|_{\LL{q}} + \int_{t/2}^t\|rS(t-s)\dii(u(s)\w(s))\|_{\LL{q}}\,ds
\\ \el{2.2em} &\hskip-1em \underset{\eqref{46}\eqref{47}}{\leqslant} 
C\wwtt{r}{t/2}t^{\frac{1}{q}-1} + C\int_{t/2}^t(t-s)^{\frac{1}{q}-\frac{1}{p}-\frac{1}{2}}\uus\wwsp{r}{p}\,ds
\\ \el{2.2em} & \underset{\eqref{uLinfty}}{\leqslant} 
Ct^{\frac{1}{q}-1} + C\int_{t/2}^t(t-s)^{\frac{1}{q}-\frac{1}{p}-\frac{1}{2}}s^{-\frac{1}{2}}\wwsp{r}{p}\,ds
\\ \el{2.2em} &\hskip0.4em 
\leqslant Ct^{\frac{1}{q}-1} + C\int_{t/2}^t(t-s)^{\frac{1}{q}-\frac{1}{p}-\frac{1}{2}}s^{-\frac{1}{2}}s^{\frac{1}{p}-1}\,ds
\\ \el{2.2em} &\hskip0.4em 
\leqslant C\left(1 + \int_{1/2}^{1}(1-s)^{\frac{1}{q}-\frac{1}{p}-\frac{1}{2}}s^{\frac{1}{p}-\frac{3}{2}}ds\right)t^{\frac{1}{q}-1}
\end{align*}
for $t>0$. Note that the constants in the computations only depend on the initial data $\wo$.
\end{proof}

\paragraph{Remark.}
Under the hypothesis of $\woo{r}<\infty$, \eqref{lemmamajorationsrw} is an refinement of \eqref{73}. It allows us to replace \eqref{uLinfty} by the following estimate
\begin{equation}\label{uLinftyr}
\uut \leqslant C(1+t)^{-1/2}t^{-1/2} \qquad \forall t>0,
\end{equation}
obtained directly thanks to \eqref{lemmamajorationsrw} combined with \eqref{26}. This bound will be used exclusively to prove the next lemma, before being superseded by a better one. 

\begin{lemma}\label{lemmamajorationsr2}
Take $\w$ the solution of \eqref{vor} with initial condition $\wo\in\LL{1}$.\\
If $(r,z)\mapsto r^2\wo(r,z)$ is in $\LL{1}$ then there exists a positive constant $C_{\wo}$ such that for every $q\in[1,+\infty]$ and all time $t>0$
\begin{align}
\label{lemmamajorationsr2w}
\wwtp{}{q} & \leqslant C_{\wo}\,(1+t)^{-1}t^{\frac{1}{q}-1}
\\ \label{lemmamajorationsr2rw}
\wwtp{r}{q} & \leqslant C_{\wo}\,(1+t)^{-\frac{1}{2}}t^{\frac{1}{q}-1} \el{1.3em}
\\ \label{lemmamajorationsr2r2w}
\wwtp{r^2}{q} & \leqslant C_{\wo}\,t^{\frac{1}{q}-1}. \el{1.3em}
\end{align}
\end{lemma}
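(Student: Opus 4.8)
The plan is to follow the architecture of the proof of lemma \ref{lemmamajorationsr}, upgraded by one order in the radial weight. First I observe that $\woo{r^2}<\infty$ forces $\woo{r}<\infty$ (split $\Omega$ into $\{r\leqslant1\}$ and $\{r>1\}$ and bound $r$ by $1$ or by $r^2$ respectively), so lemma \ref{lemmamajorationsr} is available throughout: $\wws{r}\leqslant C_{\wo}$, $\wws{}\leqslant C_{\wo}(1+s)^{-1/2}$, and the improved velocity bound \eqref{uLinftyr}, $\uus\leqslant C_{\wo}(1+s)^{-1/2}s^{-1/2}$. A fixed-point argument in $L^1_2(\Omega)=\{w\mid\|w\|_{\LL{1}}+\|r^2w\|_{\LL{1}}<\infty\}$, identical in spirit to the one of appendix \ref{annexeprop4.1}, gives local-in-time continuity of $t\mapsto\wwt{r^2}$; a Grönwall argument based on \eqref{duhameluni}, on \eqref{46} with $\alpha=\beta=2$, and on \eqref{propSdivr2} then shows that $\wwt{r^2}$ does not blow up in finite time, since on a bounded interval the factor $\sqrt{t_0+t-s}\,\wws{r}$ produced by \eqref{propSdivr2} is harmless and only the $\wws{r^2}$ contribution feeds the Grönwall loop.

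The genuinely new ingredient, and the step I expect to be the crux, is that the uniform boundedness of $\wwt{r^2}$ cannot be obtained from the mere boundedness of $\wws{r}$: estimating the nonlinear term with \eqref{propSdivr2} produces the contribution $\int_{t_0}^{t_0+t}\uus\wws{r}\,ds$ (the prefactor $t^{-1/2}$ of \eqref{propSdivr2} cancelling the $\sqrt{t_0+t-s}$), and with only $\wws{r}\leqslant C_{\wo}$ and $\uus\sim s^{-1}$ this integral diverges logarithmically. I therefore first upgrade the radial moment to a genuine decay $\wws{r}\leqslant C_{\wo}(1+s)^{-1/2}$, which is self-contained. From \eqref{duhamel} the linear part $\|rS(t)\wo\|_{\LL{1}}$ is controlled by \eqref{46} with $\alpha=1$, $\beta=2$, yielding the source $C\woo{r^2}t^{-1/2}$, while the nonlinear part is estimated through \eqref{47} with $\alpha=\beta=1$ as $C\int_0^t(t-s)^{-1/2}\uus\wws{r}\,ds$; injecting \eqref{uLinftyr} and the boundedness of $\wws{r}$, the specific Grönwall lemma of the appendix propagates the rate $t^{-1/2}$, the self-consistency being that the same exponent reappears on the right-hand side. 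Note that this step needs neither the finiteness nor the boundedness of $\wwt{r^2}$.

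With this decay in hand I close the boundedness of $\wwt{r^2}$. Starting from \eqref{duhameluni} at a large time $t_0$, the linear term is bounded by $C\wwtt{r^2}{t_0}$ via \eqref{46} with $\alpha=\beta=2$. In the nonlinear term, \eqref{propSdivr2} splits the estimate into $\int_{t_0}^{t_0+t}\uus\wws{r}\,ds$, which now converges because $\uus\wws{r}\leqslant C_{\wo}(1+s)^{-1}s^{-1/2}$, and $\int_{t_0}^{t_0+t}(t_0+t-s)^{-1/2}\uus\wws{r^2}\,ds$, which I absorb exactly as in lemma \ref{lemmamajorationsr}: estimating $\uus\leqslant C\wws{}^{1/2}s^{-1/2}$ through \eqref{uLinftyw}, bounding $\wws{}^{1/2}\leqslant\wwtt{}{t_0}^{1/2}$ by monotonicity, and using the scale-invariant identity $\int_0^{T}(T-s)^{-1/2}s^{-1/2}\,ds=B(1/2,1/2)=\pi$. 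Choosing $t_0$ so large that $C\pi\wwtt{}{t_0}^{1/2}\leqslant\frac12$ absorbs $\sup\wws{r^2}$ into the left-hand side and gives $\sup_{[0,+\infty[}\wwt{r^2}\leqslant C_{\wo}$, i.e. the case $q=1$ of \eqref{lemmamajorationsr2r2w}.

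Finally, the full family of estimates for $q\in[1,+\infty]$ follows from the same three-fold iteration as in lemma \ref{lemmamajorationsr}, now fed by the improved bounds. Splitting \eqref{duhameluni} at $t/2$, I estimate each linear head term with \eqref{46}, taking $(\alpha,\beta)=(0,2)$, $(1,1)$ and $(2,2)$ respectively for $\wwtp{}{q}$, $\wwtp{r}{q}$ and $\wwtp{r^2}{q}$, which reproduce exactly the target rates $t^{\frac1q-2}$, $(1+t)^{-1/2}t^{\frac1q-1}$ and $t^{\frac1q-1}$ (the middle one using the decay of $\wwtt{r}{t/2}$ from the second paragraph), and each nonlinear tail with \eqref{47} or \eqref{propSdivr2}, recognising Beta integrals of the type $\int_{1/2}^1(1-s)^{a}s^{b}\,ds$ after rescaling. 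The main obstacle is precisely the ordering highlighted above: the weight-two estimate depends on the decay, and not merely the boundedness, of the weight-one moment, so the decay of $\wws{r}$ must be established before, and independently of, the boundedness of $\wwt{r^2}$.
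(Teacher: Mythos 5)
Your proof reaches all three estimates, but the central step --- the uniform bound on $\wwt{r^2}$ --- is organised differently from the paper's. The paper does \emph{not} first prove an unconditional decay of the first radial moment: it establishes the conditional bound \eqref{local6}, $\wwt{r}\leqslant C(\wwtt{r^2}{t/2}+1)\,t^{-1/2}$, by applying \eqref{duhameluni} on $[t/2,t]$ and \eqref{46} with $\beta=2$ to the linear head, and it controls the second piece coming from \eqref{propSdivr2} by H\"older and the Biot--Savart estimate \eqref{28}, as $\|ru(s)\|_{\LL{4}}\wwsp{r}{4/3}\leqslant C\wwsp{r}{4/3}^2\leqslant Cs^{-1/2}$, so that $\wws{r^2}$ never enters that integral at all. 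Feeding \eqref{local6} into the remaining piece produces a Gr\"onwall inequality for $\sup_{[t_0,t]}\|r^2\w\|_{\LL{1}}$ with the integrable kernel $(1+s)^{-3/2}$, which closes with no smallness argument. Your route --- prove $\wwt{r}\leqslant C(1+t)^{-1/2}$ unconditionally first, then absorb the $\wws{r^2}$ contribution by choosing $t_0$ with $\wwtt{}{t_0}$ small, exactly as in lemma \ref{lemmamajorationsr} --- is a legitimate alternative, and it makes explicit why the decay (not just the boundedness) of the lower moment must come first; the paper's version buys a shorter argument by letting the unknown $\wwtt{r^2}{s/2}$ ride inside the Gr\"onwall loop.

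One step of yours is stated too loosely to pass as written. To get $\wwt{r}\leqslant C(1+t)^{-1/2}$ you propose to inject \eqref{uLinftyr} \emph{and the boundedness of} $\wws{r}$ into $\int_0^t(t-s)^{-1/2}\uus\wws{r}\,ds$; that literally gives $\int_0^t(t-s)^{-1/2}(1+s)^{-1/2}s^{-1/2}\,ds$, which is of order $t^{-1/2}\ln t$ (the range $1\leqslant s\leqslant t/2$ contributes $\asymp t^{-1/2}\ln t$), not $t^{-1/2}$. Nor does the ``self-consistency'' close naively: the ansatz $\wws{r}\leqslant M(1+s)^{-1/2}$ yields $M\leqslant C+C'M$ with $C'$ not small. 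What does work --- and is presumably what you intend by invoking lemma \ref{lemmaGronwallbis-gamma} --- is to run the Gr\"onwall on $f(t)=(1+t)^{1/2}\wwt{r}$: writing $(1+t)^{1/2}\leqslant(1+s)^{1/2}+(t-s)^{1/2}$ splits the kernel into $(t-s)^{-1/2}(1+s)^{-1}$ and $(1+s)^{-3/2}$ (after absorbing the range $s\leqslant1$ into the additive constant using the boundedness of $\wws{r}$), both of the admissible form $(t-s)^{\beta-1}(1+s)^{-\gamma}$ with $\gamma>\beta$, so the appendix lemma, extended to a sum of two such kernels, gives $f\in L^\infty$ without any smallness. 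Make that computation explicit; the rest of your plan (local existence, no blow-up on bounded intervals, the absorption with $B(1/2,1/2)=\pi$, and the three-fold iteration over $q$) matches the paper's and is sound.
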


\begin{proof}
Suppose that $\woo{r^2}<\infty$. As for lemma \ref{lemmamajorationsr}, we shall show first the local existence of the quantity $\wwt{r^2}$, then the fact that it does not blow up in finite time and eventually that it is bounded on $[0,+\infty[$. We shall afterwards concentrate on inequalities \eqref{lemmamajorationsr2w} to \eqref{lemmamajorationsr2r2w}. Note that, in particular, $\woo{r}<\infty$ so the conclusions of lemma \ref{lemmamajorationsr} hold.

First of all, a fixed point argument shows that there exists a time $T>0$ such that $\w$ is continuous from $[0,T]$ into
\[
\{w:\Omega\to\RR \mid \|w\|_{\LL{1}} + \|r^2 w\|_{\LL{1}} <\infty\},
\]
read for example the development in appendix \ref{annexeprop4.1} but adapted with the norm
\[
\|\w\|_{X_T} = \underset{0<t\leqslant T}{\sup}\left(t^{1/4}\wwtp{}{4/3} + t^{1/4}\wwtp{r^2}{4/3}\right)
\]
on $X_T$ and using \eqref{propSdivr2} together with an interpolation inequality when adapting equations \eqref{54} and \eqref{55}.

Let us now show that $\wwt{r^2}$ does not blow up in finite time. Let us write for $t\geqslant0$ that
\begin{equation}\label{local6}
\begin{split}
\wwt{r}
& \underset{\eqref{duhameluni}}{\leqslant} 
\|rS(t/2)\w(t/2)\|_{\LL{1}} + \int_{t/2}^t\|rS(t-s)\dii(u(s)\w(s))\|_{\LL{1}}\,ds
\\ &\hskip-1em \underset{\eqref{46}\eqref{47}}{\leqslant} 
C\wwtt{r^2}{t/2}t^{-\frac{1}{2}} + C\int_{t/2}^t(t-s)^{-\frac{1}{2}}\uus\wws{r}\,ds
\\ &\hskip-1em \underset{\eqref{uLinftyr}\eqref{lemmamajorationsrrw}}{\leqslant} 
C\wwtt{r^2}{t/2}t^{-\frac{1}{2}} + C\int_{t/2}^t(t-s)^{-\frac{1}{2}}(1+s)^{-\frac{1}{2}}s^{-\frac{1}{2}}\,ds
\\ &\hskip0.4em  
\leqslant C\left(\wwtt{r^2}{t/2} + \int_{1/2}^1(1-s)^{-\frac{1}{2}}s^{-1}\,ds\right) t^{-\frac{1}{2}},
\end{split}
\end{equation}
which leads for $t\geqslant0$ and $t_0>0$ to
\begin{align*}
&\wwtt{r^2}{t_0+t}
\\& \hskip5em \underset{\eqref{duhameluni}}{\leqslant} 
\|r^2S(t)\w(t_0)\|_{\LL{1}} + \int_{t_0}^{t_0+t}\|r^2S(t_0+t-s)\dii(u(s)\w(s))\|_{\LL{1}}\,ds
\\& \hskip4em \underset{\eqref{46}\eqref{propSdivr2}}{\leqslant} 
C\wwtt{r^2}{t_0} + C\int_{t_0}^{t_0+t}\uus\wws{r},ds
\\& \hskip5em \qquad
+ C\int_{t_0}^{t_0+t}(t_0+t-s)^{-\frac{1}{2}}\|ru(s)\|_{\LL{4}}\wwsp{r}{4/3}\,ds
\\\el{1.7em}& \hskip3.2em \underset{\eqref{local6}\eqref{28}\eqref{lemmamajorationsrrw}}{\leqslant} 
C\wwtt{r^2}{t_0} + C\int_{t_0}^{t_0+t}(1+s)^{-\frac{1}{2}}s^{-\frac{1}{2}}(\wwtt{r^2}{s/2}+1)s^{-\frac{1}{2}}\,ds
\\& \hskip5em \qquad
+ C\int_{t_0}^{t_0+t}(t_0+t-s)^{-\frac{1}{2}}s^{-\frac{1}{2}}\,ds
\end{align*}
where in the last line \eqref{28} is used with $\alpha=\beta=1$ and $p=4$, $q=\frac{4}{3}$. Translating the integration variable $s$ by $t_0$, using that $(t_0+s)^{-1/2}\leqslant\min(1,t_0)^{-1/2}(1+s)^{-1/2}$ and in the first integrand that $\wwtt{r^2}{s/2} \leqslant \sup_{[0,t_0]}\|r^2\w\|_{\LL{1}} + \sup_{[t_0,s]}\|r^2\w\|_{\LL{1}}$, one gets for every $t\geqslant t_0>0$ that
\[
\underset{[t_0,t]}{\sup}\|r^2\w\|_{\LL{1}} \leqslant C_{\wo,t_0} + C_{t_0}\int_0^t (1+s)^{-\frac{3}{2}} \underset{[t_0,s]}{\sup}\|r^2\w\|_{\LL{1}}\,ds.
\]
Grönwall's inequality applied to the map $t\mapsto\sup_{s\in[t_0,t]}\wws{r^2}$ therefore ensures that $t\mapsto\wwt{r^2}$ is well-defined and bounded on $[0,+\infty[$. This sets estimate \eqref{lemmamajorationsr2r2w} when $q=1$.

We can now thanks to this uniform bound prove inequalities \eqref{lemmamajorationsr2w} to \eqref{lemmamajorationsr2r2w}. To establish \eqref{lemmamajorationsr2w} and \eqref{lemmamajorationsr2rw}, one has to recall first that $\wwtp{}{q} \leqslant C_{\wo}\,t^{\frac{1}{q}-1}$ due to \eqref{73} and $\wwtp{r}{q} \leqslant C_{\wo}\,t^{\frac{1}{q}-1}$ due to \eqref{lemmamajorationsrrw}. Then, let us write for $t>0$
\begin{align*}
\wwt{}
& \underset{\eqref{duhameluni}}{\leqslant} 
\|S(t/2)\w(t/2)\|_{\LL{1}} + \int_{t/2}^t\|S(t-s)\dii(u(s)\w(s))\|_{\LL{1}}\,ds
\\ \el{2.2em} &\hskip-1em \underset{\eqref{46}\eqref{47}}{\leqslant} 
C\wwtt{r^2}{t/2}t^{-1} + C\int_{t/2}^t(t-s)^{-\frac{1}{2}}\uus\wws{}\,ds
\\ \el{2.2em} &\hskip-1em \underset{\eqref{uLinftyr}\eqref{lemmamajorationsrw}}{\leqslant} 
Ct^{-1} + C\int_{t/2}^t(t-s)^{-\frac{1}{2}}(1+s)^{-\frac{1}{2}}s^{-\frac{1}{2}}(1+s)^{-\frac{1}{2}}\,ds
\\ \el{2.2em} &\hskip0.4em  
\leqslant C\left(1 + \int_{1/2}^1(1-s)^{-\frac{1}{2}}s^{-\frac{3}{2}}\,ds\right)t^{-1}
\end{align*}
to get the case $q=1$ of \eqref{lemmamajorationsr2w}, and let us remember calculation \eqref{local6} which gives the case $q=1$ of \eqref{lemmamajorationsr2rw} once we know that $\|r^2\w\|_{\LL{1}}$ is bounded on $[0,+\infty[$. Finally, a three-steps iterative argument leads to all of the remaining cases. Let us proceed to the following computations three times, successively for $q\in[1,2[$, $p=1$ then $q\in[\frac{4}{3},4[$, $p=\frac{4}{3}$ and at last $q\in[3,+\infty]$, $p=3$:
\begin{align*}
\wwtp{}{q}
& \underset{\eqref{duhameluni}}{\leqslant} 
\|S(t/2)\w(t/2)\|_{\LL{q}} + \int_{t/2}^t\|S(t-s)\dii(u(s)\w(s))\|_{\LL{q}}\,ds
\\ \el{1.7em} &\hskip-1em \underset{\eqref{46}\eqref{47}}{\leqslant} 
C\wwtt{r^2}{t/2}t^{\frac{1}{q}-2} + C\int_{t/2}^t(t-s)^{\frac{1}{q}-\frac{1}{p}-\frac{1}{2}}\uus\wwsp{}{p}\,ds
\\ \el{2.2em} & \underset{\eqref{uLinfty}}{\leqslant} 
Ct^{\frac{1}{q}-2} + C\int_{t/2}^t(t-s)^{\frac{1}{q}-\frac{1}{p}-\frac{1}{2}}s^{-\frac{1}{2}}\wwsp{}{p}\,ds
\\ \el{2.2em} &\hskip0.4em  
\leqslant Ct^{\frac{1}{q}-2} + C\int_{t/2}^t(t-s)^{\frac{1}{q}-\frac{1}{p}-\frac{1}{2}}s^{-\frac{1}{2}}s^{\frac{1}{p}-2}\,ds
\\ \el{2.2em} &\hskip0.4em  
\leqslant C\left(1 + \int_{1/2}^1(1-s)^{\frac{1}{q}-\frac{1}{p}-\frac{1}{2}}s^{\frac{1}{p}-\frac{5}{2}}\,ds\right)t^{\frac{1}{q}-2}
\end{align*}
and
\begin{align*}
\wwtp{r}{q}
& \underset{\eqref{duhameluni}}{\leqslant} 
\|rS(t/2)\w(t/2)\|_{\LL{q}} + \int_{t/2}^t\|rS(t-s)\dii(u(s)\w(s))\|_{\LL{q}}\,ds
\\ \el{1.7em} &\hskip-1em \underset{\eqref{46}\eqref{47}}{\leqslant} 
C\wwtt{r^2}{t/2}t^{\frac{1}{q}-\frac{3}{2}} + C\int_{t/2}^t(t-s)^{\frac{1}{q}-\frac{1}{p}-\frac{1}{2}}\uus\wwsp{r}{p}\,ds
\\ \el{2.2em} &\underset{\eqref{uLinfty}}{\leqslant} 
Ct^{\frac{1}{q}-\frac{3}{2}} + C\int_{t/2}^t(t-s)^{\frac{1}{q}-\frac{1}{p}-\frac{1}{2}}s^{-\frac{1}{2}}\wwsp{r}{p}\,ds
\\ \el{2.2em} &\hskip0.4em 
\leqslant Ct^{\frac{1}{q}-\frac{3}{2}} + C\int_{t/2}^t(t-s)^{\frac{1}{q}-\frac{1}{p}-\frac{1}{2}}s^{-\frac{1}{2}}s^{\frac{1}{p}-\frac{3}{2}}\,ds
\end{align*}
and
\begin{align*}
\wwtp{r^2}{q}
& \underset{\eqref{duhameluni}}{\leqslant} 
\|r^2S(t/2)\w(t/2)\|_{\LL{q}} + \int_{t/2}^t\|r^2S(t-s)\dii(u(s)\w(s))\|_{\LL{q}}\,ds
\\ \el{1.7em} &\hskip-1em \underset{\eqref{46}\eqref{propSdivr2}}{\leqslant} 
C\wwtt{r^2}{t/2}t^{\frac{1}{q}-1} + C\int_{t/2}^t(t-s)^{\frac{1}{q}-\frac{1}{p}}\uus\wwsp{r}{p}\,ds
\\ &\qquad
+ C\int_{t/2}^t(t-s)^{\frac{1}{q}-\frac{1}{p}-\frac{1}{2}}\uus\wwsp{r^2}{p}\,ds
\\ \el{1.7em} & \underset{\eqref{uLinfty}}{\leqslant} 
Ct^{\frac{1}{q}-1} + C\int_{t/2}^t(t-s)^{\frac{1}{q}-\frac{1}{p}}s^{-\frac{1}{2}}\wwsp{r}{p}\,ds
\\ &\qquad
+ C\int_{t/2}^t(t-s)^{\frac{1}{q}-\frac{1}{p}-\frac{1}{2}}s^{-\frac{1}{2}}\wwsp{r^2}{p}\,ds
\\ \el{1.7em} &\hskip0.4em  
\leqslant Ct^{\frac{1}{q}-1} + C\int_{t/2}^t(t-s)^{\frac{1}{q}-\frac{1}{p}}s^{-\frac{1}{2}}s^{\frac{1}{p}-\frac{3}{2}}\,ds + C\int_{t/2}^t(t-s)^{\frac{1}{q}-\frac{1}{p}-\frac{1}{2}}s^{-\frac{1}{2}}s^{\frac{1}{p}-1}\,ds,
\end{align*}
where the constants only depend on the initial data $\wo$.
\end{proof}

\paragraph{Remark.}
In particular, we deduce from the Biot-Savart law that if $\woorz{r^2} < \infty$ then there is some positive constant $C$ such that for all time $t>0$
\begin{equation}\label{uLinftyr2}
\|u(t)\|_{\LL{\infty}} \leqslant C(1+t)^{-1}t^{-1/2}.
\end{equation}
This estimate is given by inequalities \eqref{26} and \eqref{lemmamajorationsr2w} put together, and it is the best one can get concerning the supremum of $u$ as long as the impulse $I_0$ is not zero. 

\paragraph{}
Lemmas \ref{lemmamajorationsr} and \ref{lemmamajorationsr2} are the most direct way to reach the case where $(r,z)\mapsto(1+r^2)\wo(r,z)$ is in $\LL{1}$, but one could consider all of the intermediate cases:

\begin{proposition}\label{propmajorationsralpha}
Take $\w$ the solution of \eqref{vor} with initial condition $\wo\in\LL{1}$.\\
Take $\alpha$ in $[0,2]$. If $(r,z)\mapsto r^\alpha\wo(r,z)$ is in $\LL{1}$ then there exists a positive constant $C_{\wo,\alpha}$ such that for every $q\in[1,+\infty]$ and all time $t>0$
\begin{align}
\label{propmajorationsralphaw}
\wwtp{}{q} & \leqslant C_{\wo,\alpha}\,(1+t)^{-\frac{\alpha}{2}}t^{\frac{1}{q}-1}
\\ \label{propmajorationsralphawralpha}
\wwtp{r^\alpha}{q} & \leqslant C_{\wo,\alpha}\,t^{\frac{1}{q}-1}. \el{1.3em}
\end{align}
\end{proposition}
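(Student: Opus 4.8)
The statement interpolates between Lemma~\ref{lemmamajorationsr} (the case $\alpha=1$) and Lemma~\ref{lemmamajorationsr2} (the case $\alpha=2$), and reduces to \eqref{73} at the endpoint $\alpha=0$, so my plan is to reproduce the three-stage template used there. I would first observe that, since $\wo\in\LL{1}$ and $\woo{r^\alpha}<\infty$, every intermediate moment is finite: $\woo{r^\beta}<\infty$ for $0\leqslant\beta\leqslant\alpha$, because $r^\beta\leqslant1+r^\alpha$. In particular, as soon as $\alpha\geqslant1$ the hypotheses of Lemma~\ref{lemmamajorationsr} are satisfied, so the bound $\wwtp{r}{q}\leqslant C_{\wo}\,t^{\frac1q-1}$ and the sharpened velocity estimate \eqref{uLinftyr} are already available; for $\alpha<1$ only the baseline facts \eqref{73}, \eqref{uLinfty} and \eqref{uLinftyw} may be used. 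The heart of the matter is the uniform-in-time bound $\wwt{r^\alpha}\leqslant C_{\wo,\alpha}$; the two decay estimates \eqref{propmajorationsralphaw}--\eqref{propmajorationsralphawralpha} will then follow by the Duhamel bootstrap of the two lemmas.

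For that uniform bound I would argue exactly as in Lemma~\ref{lemmamajorationsr}: a fixed-point argument of the type detailed in appendix~\ref{annexeprop4.1}, adapted to the weight $r^\alpha$ through the relevant instance of Proposition~\ref{propS}, yields local existence of $t\mapsto\wwt{r^\alpha}$; a Grönwall estimate applied to \eqref{duhameluni} prevents finite-time blow-up; and an absorption argument, choosing $t_0$ so large that $\wwtt{}{t_0}$ is small (legitimate because $\wwt{}\to0$), upgrades this to a bound uniform on $[0,+\infty[$. The only new point is the control of the $r^\alpha$-weighted $\LL{1}$-norm of $S(t_0+t-s)\dii(u(s)\w(s))$. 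For $\alpha\leqslant1$ one simply invokes \eqref{47} with $\beta=\alpha$ and bounds $\|r^\alpha u\w\|_{\LL{1}}\leqslant\uus\,\wws{r^\alpha}$. For $1<\alpha\leqslant2$ the weight $r^\alpha$ is out of reach of \eqref{47}, and I would use instead the instance of Proposition~\ref{propS}(ii) with $\gamma=0$, base exponent $1$ and $\alpha'=\alpha-1$, namely
\[
\|r^\alpha S(\tau)\dii w\|_{\LL{q}}\leqslant C\,\big\|(\tau^{\frac{\alpha-1}{2}}r+r^\alpha)w\big\|_{\LL{p}}\ \tau^{-\frac12+\frac1q-\frac1p}.
\]
The troublesome factor $\tau^{\frac{\alpha-1}{2}}$ multiplies only the lower moment $\|r\,u\w\|_{\LL{1}}\leqslant\uus\,\wws{r}$, which is bounded through Lemma~\ref{lemmamajorationsr}; together with the decay $\uus\leqslant C(1+s)^{-1/2}s^{-1/2}$ from \eqref{uLinftyr}, the resulting forcing integral stays bounded for every $\alpha\leqslant2$, just as in Lemma~\ref{lemmamajorationsr2}.

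Granting $\wwt{r^\alpha}\leqslant C_{\wo,\alpha}$, I would obtain \eqref{propmajorationsralphaw} for $q=1$ by iterating on the decay exponent of $\wwt{}$, starting from the positive rate produced as in the opening steps of Lemma~\ref{lemmamajorationsr}. Writing \eqref{duhameluni} at $t/2$, the linear term is $\|S(t/2)\w(t/2)\|_{\LL{1}}\leqslant C\,\wwtt{r^\alpha}{t/2}\,(t/2)^{-\alpha/2}\leqslant C\,t^{-\alpha/2}$ by \eqref{46}, while the nonlinear term is handled with \eqref{47} and the refined bound \eqref{uLinftyw}, $\uus\leqslant C\,\wws{}^{1/2}s^{-1/2}$: if $\wws{}\leqslant C s^{-\delta}$, the loop returns the rate $\min(\alpha/2,\tfrac32\delta)$, so finitely many steps reach $\wwt{}\leqslant C(1+t)^{-\alpha/2}$, and \eqref{propmajorationsralphawralpha} for $q=1$ comes out along the way. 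The remaining range $q\in[1,+\infty]$ is recovered verbatim by the three-step iterative Duhamel argument ($q\in[1,2[$, then $[\tfrac43,4[$, then $[3,+\infty]$) that closes both lemmas.

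The principal obstacle is the intermediate range $1<\alpha<2$, which falls between the two families of semigroup estimates: the weight $r^\alpha$ lies beyond \eqref{47} (which stops at moment $1$) yet below the moment-$[2,3]$ estimate \eqref{propSdivr2alpha}. One must therefore descend to the master Proposition~\ref{propS} and, above all, neutralise the growing prefactor $\tau^{\frac{\alpha-1}{2}}$ it produces. That neutralisation relies on two facts acting in concert---that $\woo{r^\alpha}<\infty$ with $\alpha\geqslant1$ forces $\woo{r}<\infty$, so Lemma~\ref{lemmamajorationsr} supplies both the decay of $\wwt{r}$ and the improved velocity bound \eqref{uLinftyr}---and checking that their combination keeps every forcing integral bounded uniformly in $t$ is the step demanding the most care.
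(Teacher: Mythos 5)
Your proposal is correct and matches the paper's proof, which is itself just the one-line instruction to rerun the arguments of Lemmas \ref{lemmamajorationsr} and \ref{lemmamajorationsr2} (the former for $\alpha\in[0,1]$, the latter for $\alpha\in\,]1,2]$); your identification of the intermediate semigroup estimate $\|r^\alpha S(\tau)\dii w\|_{\LL{q}}\leqslant C\|(\tau^{\frac{\alpha-1}{2}}r+r^\alpha)w\|_{\LL{p}}\,\tau^{-\frac12+\frac1q-\frac1p}$ as the relevant instance of Proposition \ref{propS}(ii) is exactly the natural generalisation of \eqref{propSdivr2} that the paper relies on. The only imprecision is at the endpoint $\alpha=2$, where the crude bound $\wws{r}\leqslant C$ leaves a logarithm in the forcing integral and one must use the refined estimate \eqref{local6} as in Lemma \ref{lemmamajorationsr2} --- but that endpoint is Lemma \ref{lemmamajorationsr2} itself, so nothing is lost.
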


\begin{proof}
The arguments presented in the proofs of lemmas \ref{lemmamajorationsr} and \ref{lemmamajorationsr2} are applicable in the same order, following the proof of lemma \ref{lemmamajorationsr} when $\alpha\in[0,1]$ and the proof of lemma \ref{lemmamajorationsr2} when $\alpha\in\ ]1,2]$. 
\end{proof}

\subsection{First-Order asymptotics}
\label{sectionfirstorder}

In this subsection, we show first \eqref{theoremintror2} then we study how close $\w(t)$ and its first asymptotic term are the one to the other.

Due to the symmetry of the problem, when both $\wo$ and $(r,z)\mapsto r^2\wo(r,z)$ are in $\LL{1}$ the vorticity has a constant impulse: 
\begin{equation}\label{ItisI0}
I(t) = \int_\Omega r^2\w(t,r,z)\,drdz = \int_\Omega r^2\wo(r,z)\,drdz = I_0 \qquad\forall t\geqslant0.
\end{equation}
Indeed, remembering that $\Delta = \partial_r^2 + \frac{1}{r}\partial_r + \partial_z^2$ in \eqref{vor} implies that
\[
\dot{I}(t) = \int_\Omega r^2\partial_t\w(t) = \int_\Omega r^2\left( \partial_r^2 + \partial_z^2 + \frac{1}{r}\partial_r - \frac{1}{r^2}\right)\w(t) -\int_\Omega r^2\dii(u(t)\w(t))
\]
where after several integrations by parts, the first integral vanishes and the second is equal to $\int_\Omega 2r\,u_r(t)\w(t)$.
Writing therefore $\dot{I}(t) = \int_\Omega 2r\,u_r(t)\w(t)$,
then substituting $u_r$ with its expression by the Biot-Savart law \eqref{BSaxi} shows that $\dot{I}(t)$ is zero, because the integrand under the double integral obtained that way is odd with respect to permuting the variables $(r,z)$ and $(\rho,\zeta)$ in $\Omega\times\Omega$. 

\paragraph{Remark.}
The existence of $\dot{I}(t) = \frac{d}{dt}I(t)$ is ensured by the fact that $\w\in\CC^1(]0,+\infty[,L^1_2(\Omega))$, where $L^1_2(\Omega) = L^1(\Omega,(1+r^2)drdz)$. The reader can refer to the end of appendix \ref{annexeprop4.1} for more details.

\begin{proposition}\label{propwr2}
Take $\w$ the solution of \eqref{vor} with initial data $\wo\in\LL{1}$.\\
If $(r,z)\mapsto r^2\wo(r,z)$ is in $\LL{1}$ then
\begin{equation}\label{propwr2convergence}
\w(t,r,z) \ = \ \frac{I_0r}{16\sqrt{\pi}}e^{\mbox{$-\frac{r^2+z^2}{4t}$}}\ t^{-5/2} + \underset{t\to+\infty}{o}\left(t^{\frac{1}{p}-2}\right)
\end{equation}
in $\LL{p}$ for every $p\in[1,+\infty]$.
\end{proposition}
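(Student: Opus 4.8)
The plan is to show that $\w(t)$ has the same leading behaviour as the linear evolution, for which Lemma \ref{lemmaconvergenceS} already produces exactly the claimed first term. Writing $\w(t)=S(t)\wo-\int_0^tS(t-s)\dii(u(s)\w(s))\,ds$ through \eqref{duhamel}, it would suffice to prove that the nonlinear integral is $o(t^{1/p-2})$ in $\LL{p}$. The difficulty is that the crude estimates available here only give that this integral is $O(t^{1/p-2})$, i.e. precisely the size of the leading term: the required gain can only come from a cancellation, namely that $\dii(u\w)$ carries zero impulse (this is the content of the computation behind \eqref{ItisI0}, giving $\int_\Omega r^2\dii(u\w(s))=-\dot I(s)=0$). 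The natural tool to exploit it, the first-order estimate \eqref{propS1divr}, is unusable because its right-hand side demands $z$-moments of $u\w$ that are not controlled under the sole hypothesis $r^2\wo\in\LL{1}$.

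To circumvent this, I would \emph{not} isolate $S(t)\wo$, but instead start Duhamel from a large fixed time $t_0>0$ via \eqref{duhameluni}:
\[
\w(t)=S(t-t_0)\w(t_0)-\int_{t_0}^tS(t-s)\dii(u(s)\w(s))\,ds,\qquad t\geq t_0.
\]
The whole early nonlinear contribution $\int_0^{t_0}$ is thereby absorbed into $S(t-t_0)\w(t_0)$, and the cancellation is then handled automatically. Indeed $\w(t_0)\in\LL{1}$ with $r^2\w(t_0)\in\LL{1}$ by \eqref{lemmamajorationsr2r2w}, and $\int_\Omega r^2\w(t_0)=I_0$ by \eqref{ItisI0}; hence Lemma \ref{lemmaconvergenceS} applied to the \emph{fixed} datum $\w(t_0)$ yields $S(t-t_0)\w(t_0)=\frac{I_0r}{16\sqrt\pi}e^{-(r^2+z^2)/(4(t-t_0))}(t-t_0)^{-5/2}+o(t^{1/p-2})$. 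Since the explicit profile $\tau\mapsto\frac{I_0r}{16\sqrt\pi}e^{-(r^2+z^2)/(4\tau)}\tau^{-5/2}$ has $\LL{p}$-norm of order $\tau^{1/p-2}$ and time-derivative of order $\tau^{1/p-3}$, replacing $\tau=t-t_0$ by $\tau=t$ costs only $O(t_0\,t^{1/p-3})=o(t^{1/p-2})$, so this term already equals the announced first term up to $o(t^{1/p-2})$.

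It remains to bound the late remainder $R(t)=\int_{t_0}^tS(t-s)\dii(u\w(s))\,ds$, split at $t/2$. On $[t/2,t]$ I would use \eqref{47} with $q=p$, $\alpha=0$ and some $\beta\in(0,1)$ (to keep the time singularity integrable), together with $\|u(s)\|_{\LL{\infty}}\leq C(1+s)^{-1}s^{-1/2}$ from \eqref{uLinftyr2} and the interpolation $\|r^\beta\w(s)\|_{\LL{p}}\leq\|\w(s)\|_{\LL{p}}^{1-\beta}\|r\w(s)\|_{\LL{p}}^{\beta}$ controlled through \eqref{lemmamajorationsr2w}--\eqref{lemmamajorationsr2rw}; a routine computation gives a bound of order $t^{1/p-3}=o(t^{1/p-2})$. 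On $[t_0,t/2]$, where $t-s\geq t/2$, estimate \eqref{47} with $\alpha=0$, $\beta=1$ and source exponent $1$ gives $\|S(t-s)\dii(u\w)\|_{\LL{p}}\leq C(t-s)^{1/p-2}\|r\,u(s)\w(s)\|_{\LL{1}}\leq C\,t^{1/p-2}\|r\,u\w(s)\|_{\LL{1}}$, whence
\[
\Big\|\int_{t_0}^{t/2}S(t-s)\dii(u\w(s))\,ds\Big\|_{\LL{p}}\leq C\,t^{1/p-2}\int_{t_0}^{\infty}\|r\,u(s)\w(s)\|_{\LL{1}}\,ds.
\]
Because $\|r\,u(s)\w(s)\|_{\LL{1}}\leq\|u(s)\|_{\LL{\infty}}\|r\w(s)\|_{\LL{1}}\leq C(1+s)^{-3/2}s^{-1/2}$ is integrable on $[0,\infty)$, the tail $\varepsilon(t_0):=\int_{t_0}^\infty\|r\,u\w\|_{\LL{1}}\,ds\to0$ as $t_0\to\infty$. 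Collecting the three contributions yields $\limsup_{t\to\infty}t^{2-1/p}\|\w(t)-\frac{I_0r}{16\sqrt\pi}e^{-(r^2+z^2)/(4t)}t^{-5/2}\|_{\LL{p}}\leq C\varepsilon(t_0)$, and letting $t_0\to\infty$ closes the argument for every $p\in[1,+\infty]$.

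The main obstacle is thus conceptual rather than computational: the nonlinear term is genuinely of the same order $t^{1/p-2}$ as the leading term, and the $o$-improvement cannot be extracted term-by-term under these weak hypotheses. The decisive idea is to let the intermediate time $t_0$ swallow the non-small early part into $S(t-t_0)\w(t_0)$, so that Lemma \ref{lemmaconvergenceS} delivers the cancellation through the conserved impulse $I_0$, leaving behind only a genuinely small time-tail $\varepsilon(t_0)$.
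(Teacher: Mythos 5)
Your argument is correct and follows essentially the same route as the paper's proof: start the Duhamel formula at a fixed large time $t_0$, apply lemma \ref{lemmaconvergenceS} to the datum $\w(t_0)$ (whose impulse is still $I_0$ by \eqref{ItisI0} and whose second radial moment is finite by \eqref{lemmamajorationsr2r2w}), split the remaining integral at $t/2$, and let $t_0\to+\infty$ to kill the tail. The only differences (using a $\beta\in(0,1)$ with interpolation on $[t/2,t]$ where the paper takes $\beta=0$, and handling the shift $t-t_0\mapsto t$ via a derivative bound rather than a direct comparison of the two Gaussian profiles) are cosmetic.
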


\begin{proof}
Let $\woo{r^2}$ be finite, $\wo\in\LL{1}$. The goal is to show that, in the self-similar coordinates $(r\sqrt{t},z\sqrt{t})$, the quantity $t^2\w(t,\cdot\sqrt{t},\cdot\sqrt{t}) - I_0G_1$ tends to zero in any $\LL{p}$ when $t$ tends to infinity (remembering that $G_1$ is given by \eqref{defG1}). 

Let us fix $t_0>0$, then for any $p$ in $[1,+\infty]$ and any $t> 2t_0$ we can write 
\begin{align*}
\|t^2\w(t,\cdot\sqrt{t},\cdot\sqrt{t}) - I_0G_1\|_{\LL{p}}
\underset{\eqref{duhameluni}}{\leqslant} &
\left\|t^2S(t-t_0)[\w(t_0)](\cdot\sqrt{t}) - \left(\frac{t}{t-t_0}\right)^2\!\!I_0G_1\!\left(\!\cdot\,\frac{\sqrt{t}}{\sqrt{t-t_0}}\right)\right\|_{\LL{p}}
\\ \rule[2.2em]{0pt}{0pt} &
+ \left\|I_0\left(1-\frac{t_0}{t}\right)^{-2}G_1\left(\cdot\left(1-\frac{t_0}{t}\right)^{-\frac{1}{2}}\right) - I_0G_1\right\|_{\LL{p}}
\\ \rule[2.2em]{0pt}{0pt} &
+ \int_{t_0}^{t/2}t^2\left\|S(t-s)\dii(u(s)\,\w(s))\left(\cdot\sqrt{t}\right)\right\|_{\LL{p}}ds
\\ \rule[2.2em]{0pt}{0pt} &
+ \int_{t/2}^tt^2\left\|S(t-s)\dii(u(s)\,\w(s))\left(\cdot\sqrt{t}\right)\right\|_{\LL{p}}ds
\end{align*}
where three of those four terms tend to zero as $t$ tends to infinity and one will have to be dominated by a decreasing function of $t_0$. 

The convergence of the second term is trivial. The first one can be written with a change of variables as $(\frac{t}{t-t_0})^{2-\frac{1}{p}}\|(t-t_0)^2S(t-t_0)[\w(t_0)](\cdot\sqrt{t-t_0}) - I_0G_1\|_{\LL{p}}$, which tends to zero according to lemma \ref{lemmaconvergenceS} and the fact that $I(t_0)=I_0$ (recall \eqref{ItisI0}).  The fourth one satisfies
\begin{align*}
&\int_{t/2}^tt^2\left\|S(t-s)\dii(u(s)\,\w(s))\left(\cdot\sqrt{t}\right)\right\|_{\LL{p}}ds
\\& \hskip9.8em \underset{\eqref{47}}{\leqslant} 
C\int_{t/2}^t t^{2-\frac{1}{p}}(t-s)^{-\frac{1}{2}}\uus{}\wwsp{}{p}\,ds
\\& \hskip8.8em \underset{\eqref{uLinftyr2}\eqref{lemmamajorationsr2w}}{\leqslant} 
C\int_{t/2}^t t^{2-\frac{1}{p}}(t-s)^{-\frac{1}{2}}s^{-\frac{3}{2}}s^{\frac{1}{p}-2}\,ds
\\& \hskip10.4em  
\leqslant C\left(\int_{1/2}^{1}(1-s)^{-\frac{1}{2}}s^{\frac{1}{p}-\frac{7}{2}}ds\right) t^{-1}
\end{align*}
so converges to zero as well, and the third term can be bounded as
\begin{align*}
&\int_{t_0}^{t/2}t^2\left\|S(t-s)\dii(u(s)\,\w(s))\left(\cdot\sqrt{t}\right)\right\|_{\LL{p}}ds
\\& \hskip9.8em \underset{\eqref{47}}{\leqslant} 
C\int_{t_0}^{t/2} t^{2-\frac{1}{p}}(t-s)^{\frac{1}{p}-2}\uus{}\wws{r}\,ds
\\& \hskip8.8em \underset{\eqref{uLinftyr2}\eqref{lemmamajorationsr2rw}}{\leqslant} 
C\int_{t_0}^{t/2}2^{2-\frac{1}{p}} (1+s)^{-1}s^{-\frac{1}{2}}(1+s)^{-\frac{1}{2}}\,ds
\\& \hskip10.4em 
\leqslant C\left(\int_0^{+\infty}(1+s)^{-\frac{3}{2}}ds\right) t_0^{-\frac{1}{2}}
\end{align*}
where the constants do not depend on $t_0$. Finally, 
\[
\underset{t\to+\infty}{\limsup}\ \|t^2\w(t,\cdot\sqrt{t},\cdot\sqrt{t}) - I_0G_1\|_{\LL{p}} \leqslant 0 + 0 + C\,t_0^{-\frac{1}{2}} + 0
\]
for any $t_0>0$, which ends the proof.
\end{proof}

Let us now specify the $o(t^{\frac{1}{p}-2})$ in proposition \ref{propwr2} by reinforcing the hypotheses on $\wo$. For this, let us remark that another consequence of 
\begin{equation}\label{I't=0}
\dot{I}(t) = -\int_\Omega r^2\dii(u(t,r,z)\w(t,r,z))\,drdz= 0
\end{equation}
is that for every $t\geqslant s\geqslant0$
\begin{equation}\label{S1divuwisSdivuw}
S_1(t-s)\dii(u(s)\w(s)) = S(t-s)\dii(u(s)\w(s)),
\end{equation}
where $S_1$ is defined by \eqref{defS1}. 
This means that the difference $\w(t) - I_0G_1(\frac{\cdot}{\sqrt{t}})t^{-2}$ between the vorticity and its first asymptotic term satisfies 
\begin{equation}\label{duhameldifference}
\w(t) - I_0G_1\left(\frac{\cdot}{\sqrt{t}}\right)t^{-2} \underset{\eqref{duhamel}\eqref{S1divuwisSdivuw}}{=} S_1(t)\wo - \int_0^tS_1(t-s)\dii(u(s)\w(s))\,ds.
\end{equation}

\begin{proposition}\label{propwr2+alpha}
Take $\w$ the solution of \eqref{vor} with initial condition $\wo\in\LL{1}$.\\
Take $\alpha$ in $]0,1]$. If $(r,z)\mapsto r^\alpha(r^2+r|z|+|z|)\wo(r,z)$ is in $\LL{1}$ then there exists a positive constant $C_{\wo,\alpha}$ such that for every $q\in[1,+\infty]$ and all $t>0$
\begin{equation}
\left\|\,\w(t,r,z) - \frac{I_0r}{16\sqrt{\pi}}e^{\mbox{$-\frac{r^2+z^2}{4t}$}}\ t^{-\frac{5}{2}}\right\|_{\LL{q}} 
\leqslant C_{\wo,\alpha}\,t^{\frac{1}{q}-2-\frac{\alpha}{2}}.
\end{equation}
\end{proposition}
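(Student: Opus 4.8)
The plan is to work directly from the fixed-point identity \eqref{duhameldifference}, which expresses the difference $\w(t)-\frac{I_0r}{16\sqrt\pi}e^{-\frac{r^2+z^2}{4t}}t^{-5/2}=\w(t)-I_0G_1(\tfrac{\cdot}{\sqrt t})t^{-2}$ as $S_1(t)\wo$ minus the nonlinear integral $\int_0^tS_1(t-s)\dii(u(s)\w(s))\,ds$, and to bound the $\LL q$ norm of each piece. The linear term is immediate: applying \eqref{propS1r} in the notation of proposition \ref{propS1} with vanishing output weight, with $p=1$ and with input exponent $\beta=1+\alpha$ (admissible since $3-\tfrac2p=1<1+\alpha\le2$ for $\alpha\in\,]0,1]$) gives $\|S_1(t)\wo\|_{\LL q}\le C\,\woo{r^{1+\alpha}(r+|z|)}\,t^{\frac1q-2-\frac\alpha2}$, and the weight $r^{1+\alpha}(r+|z|)=r^{2+\alpha}+r^{1+\alpha}|z|$ is exactly controlled by the hypothesis $\woo{r^\alpha(r^2+r|z|+|z|)}<\infty$. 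This already produces the announced rate, so everything reduces to showing that the nonlinear integral decays at least as fast.

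The key preliminary, and what I expect to be the main obstacle, is a uniform bound on the mixed moment $\wws{r^\alpha|z|}\le C_{\wo,\alpha}$ for all $s\ge0$. Such $z$-moments tend to grow like $\sqrt s$ under the diffusion, and the whole point is that the radial weight $r^\alpha$ compensates this spreading: the semigroup estimate \eqref{propSralphaz} carries the time exponent $\tfrac1q-\tfrac1p=0$ in $\LL1$. I would establish the bound by the three-step scheme of lemmas \ref{lemmamajorationsr} and \ref{lemmamajorationsr2}: local existence of the moment by a fixed-point argument as in appendix \ref{annexeprop4.1}, absence of finite-time blow-up, and a global bound. Feeding $w=u(s)\w(s)$ into \eqref{propSdivralphaz} and using the velocity decay \eqref{uLinftyr2} together with the already established radial bounds $\wws{r}\le C(1+s)^{-1/2}$ and $\wws{r^{1+\alpha}}\le C$ (lemma \ref{lemmamajorationsr2} and proposition \ref{propmajorationsralpha}, valid since $1+\alpha\le2$), the non-self-referential contributions are seen to stay bounded, while the self-referential term is closed by a Grönwall argument after taking the base time $t_0$ large enough that $\wwtt{}{t_0}$ is small. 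Combined with $\wws{r^{1+\alpha}}\le C$ this yields $\|r^\alpha(r+|z|)\w(s)\|_{\LL1}\le C_{\wo,\alpha}$ uniformly in $s$.

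With this bound in hand I would split the nonlinear integral at $s=t/2$. On $[0,t/2]$ the argument $t-s$ is comparable to $t$, so I would apply the sharpened estimate \eqref{propS1divr} with vanishing output weight, $p=1$ and input exponent equal to the present $\alpha$, whose time power is $(t-s)^{\frac1q-2-\frac\alpha2}\le Ct^{\frac1q-2-\frac\alpha2}$, and bound what remains by $\int_0^{+\infty}\uus\,\|r^\alpha(r+|z|)\w(s)\|_{\LL1}\,ds$; by \eqref{uLinftyr2} this integrand is $O((1+s)^{-1}s^{-1/2})$, hence integrable, giving the contribution $C_{\wo,\alpha}t^{\frac1q-2-\frac\alpha2}$. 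On $[t/2,t]$ the kernel $t-s$ is small, and here I would use the identity \eqref{S1divuwisSdivuw} to replace $S_1$ by $S$ and then invoke the plain estimate \eqref{47} with a small power $\beta\le1$ of $r$, together with the radial decay of $\wwsp{r^\beta}{p}$ from lemma \ref{lemmamajorationsr2} and $\uus=O(s^{-3/2})$; since $s\sim t$ controls the integral, this contributes a term of order $t^{\frac1q-3}$, which decays strictly faster than $t^{\frac1q-2-\frac\alpha2}$ because $\alpha\le1$.

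Choosing $p$ as a function of $q$, or iterating over $q$ as in the earlier lemmas, lets the last step reach every $q\in[1,+\infty]$, and summing the three contributions gives the claimed estimate. The genuinely delicate point, on which the sharpness of the exponent $t^{\frac1q-2-\frac\alpha2}$ hinges, is the uniform control of the $z$-moment $\wws{r^\alpha|z|}$ in the second paragraph: the Grönwall bookkeeping must be arranged so that the spreading in $z$ is exactly absorbed by the radial localisation, with no logarithmic loss, which is what forces the particular combination of weights appearing in the hypothesis.
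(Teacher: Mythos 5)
Your argument is correct and follows essentially the same route as the paper: the decomposition \eqref{duhameldifference}, the splitting of the Duhamel integral at $s=t/2$, the estimates \eqref{propS1r}, \eqref{propS1divr} and \eqref{47} with the exponents you indicate, and above all the uniform bound on $\wwt{r^\alpha(r+|z|)\,}$, which is exactly the paper's lemma \ref{lemmamajorationr1alpha+ralphaz}. The only (harmless) divergence is in how that moment bound is closed: the paper feeds the decay $\uut\leqslant C(1+t)^{-1}t^{-1/2}$ into the appendix Grönwall lemma \ref{lemmaGronwallbis-gamma}, rather than using the smallness of $\wwtt{}{t_0}$ for $t_0$ large as in lemma \ref{lemmamajorationsr}; both closures work here.
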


Before showing proposition \ref{propwr2+alpha}, we need a lemma certifying that some of the finite moments asked to $\wo$ stay finite through the evolution of $\w$.

\begin{lemma}\label{lemmamajorationr1alpha+ralphaz}
Take $\w$ the solution of \eqref{vor} with initial condition $\wo\in\LL{1}$.\\
Take $\alpha$ in $[0,1]$. If $(r,z)\mapsto r^\alpha(r+|z|)\wo(r,z)$ is in $\LL{1}$ then $t\mapsto\wwtrz{r^\alpha(r+|z|)\,}$ is bounded on $[0,+\infty[$.
\end{lemma}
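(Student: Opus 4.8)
The plan is to follow the three-step scheme already used in lemmas \ref{lemmamajorationsr} and \ref{lemmamajorationsr2}: local existence of the moment, absence of finite-time blow-up, then a uniform bound on $[0,+\infty[$. The first thing I would observe is that the hypothesis $r^\alpha(r+|z|)\wo\in\LL{1}$ splits into $r^{1+\alpha}\wo\in\LL{1}$ and $r^\alpha|z|\wo\in\LL{1}$, with $1+\alpha\in[1,2]$, and in particular $\wo$ and $r\wo$ are in $\LL{1}$. The radial part is then \emph{free}: proposition \ref{propmajorationsralpha}, whose exponent we take equal to $1+\alpha$, already gives that $\wwt{r^{1+\alpha}}$ is bounded, that $\wwt{}$ decays like $(1+t)^{-(1+\alpha)/2}$, and, after a Hölder interpolation between the $\LL{1}$ and $r^{1+\alpha}$ moments, that $\wwt{r}\leqslant C(1+t)^{-\alpha/2}$. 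The velocity bounds \eqref{uLinfty}, \eqref{uLinftyr} and \eqref{uLinftyw} are therefore all available. It thus only remains to control the genuinely new vertical moment $\wwt{r^\alpha|z|}$.

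For this I would use the two $z$-weighted semigroup estimates \eqref{propSralphaz} and \eqref{propSdivralphaz} inside the Duhamel formula \eqref{duhameluni}. A fixed-point argument, as detailed in appendix \ref{annexeprop4.1} but adapted to the weight $r^\alpha(r+|z|)$, gives local existence of $\wwt{r^\alpha|z|}$ on a short interval. Applying $r^\alpha|z|$ and the $\LL{1}$ norm to \eqref{duhameluni}, the linear term is bounded through \eqref{propSralphaz} by $C\wwtt{r^\alpha(r+|z|)}{t_0}$, while the Duhamel integral, taking $w=u\w$ in \eqref{propSdivralphaz}, is bounded by
\[
C\int_{t_0}^{t}(t-s)^{-\frac{1}{2}}\uus\left[(t-s)^{\frac{\alpha}{2}}\wws{r} + \wws{r^\alpha|z|}\right]ds.
\]
With the crude bound \eqref{uLinfty} on $\uus$, a Grönwall argument (in the spirit of lemma \ref{lemmamajorationsr}) rules out finite-time blow-up. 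For the uniform bound I would restart from a large time $t_0$ and split the integral into a \emph{source} part, carrying $(t-s)^{\alpha/2}\wws{r}$, and an \emph{absorbing} part, carrying $\wws{r^\alpha|z|}$. Using the refined bound \eqref{uLinftyw}, namely $\uus\leqslant C\wws{}^{1/2}s^{-1/2}$, together with the fact that $\wwt{}\to0$, the absorbing part carries a coefficient of order $\wwtt{}{t_0}^{1/2}\,B(1/2,1/2)$ which can be made $\leqslant\tfrac12$ by choosing $t_0$ large, exactly as in lemma \ref{lemmamajorationsr}; the source part, controlled by \eqref{uLinftyr} and the decay of $\wws{r}$, contributes a finite quantity independent of $t$. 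Absorbing then yields $\sup_{[t_0,t]}\|r^\alpha|z|\,\w\|_{\LL{1}}<\infty$, hence the claim.

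The main obstacle is the coupling introduced by the factor $r\,(t-s)^{\alpha/2}$ in \eqref{propSdivralphaz}: the growth of the vertical moment is driven by the radial moment $\wws{r}$ with an extra $(t-s)^{\alpha/2}$ weight, so one must verify that the source integral is uniformly bounded in $t$. Integrability at $s\to t$ is fine since $-\tfrac12+\tfrac{\alpha}{2}>-1$ for $\alpha\in[0,1]$; at large $s$ the combined decay $\uus\wws{r}\sim s^{-1-\alpha/2}$, obtained from \eqref{uLinftyr} and the interpolated decay of $\wws{r}$, is what keeps the integral finite (this decay is genuinely needed at the endpoint $\alpha=1$, where a merely bounded $\wws{r}$ would produce a logarithmic divergence). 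Splitting the source integral at the midpoint $(t_0+t)/2$ and estimating each half by the usual Beta-function argument makes this rigorous, and the absorbing term requires no idea beyond the smallness of $\wwtt{}{t_0}$.
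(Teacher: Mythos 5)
Your proof is correct and follows the same three-step scheme as the paper's: local existence of the moment by a fixed point in a weighted space, absence of finite-time blow-up via a crude Grönwall argument with \eqref{uLinfty}, then a uniform bound obtained from the Duhamel formula \eqref{duhameluni} combined with the semigroup estimates \eqref{propSralphaz} and \eqref{propSdivralphaz}. The only divergences from the paper are in how the final integral inequality is closed, and they are harmless. First, the paper invokes the tailored Grönwall lemma \ref{lemmaGronwallbis-gamma} with kernel $(t-s)^{-1/2}(1+s)^{-1}$, whereas you absorb the $\wws{r^\alpha z}$ term by taking $t_0$ large enough that the coefficient $C\,\wwtt{}{t_0}^{1/2}B(1/2,1/2)$ is at most $\tfrac12$, exactly as in lemma \ref{lemmamajorationsr}; both are valid, and your route avoids the appendix lemma at the price of making the preliminary no-blow-up step indispensable (which you do include). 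Second, for the source term you pair the velocity bound \eqref{uLinftyr} with the interpolated decay $\wws{r}\leqslant C(1+s)^{-\alpha/2}$, while the paper pairs the sharper velocity bound $\uus\leqslant C(1+s)^{-\frac{1+\alpha}{2}}s^{-1/2}$ (from proposition \ref{propmajorationsralpha} together with \eqref{26}) with the mere boundedness of $\wws{r}$ from \eqref{lemmamajorationsrrw}; the resulting product $s^{-1/2}(1+s)^{-\frac{1+\alpha}{2}}$ is identical, and your observation that at $\alpha=1$ a bounded $\wws{r}$ combined only with \eqref{uLinftyr} would produce a logarithmic divergence correctly identifies why the extra $(1+s)^{-\alpha/2}$ of decay must be supplied by one factor or the other.
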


\begin{proof}
Let $\woo{r^\alpha(r+|z|)\,}$ be finite, where $0\leqslant\alpha\leqslant1$. The strategy of this proof will be similar to those of lemmas \ref{lemmamajorationsr} and \ref{lemmamajorationsr2}. $\woo{r^{1+\alpha}}$ being supposed finite, we already have access to the conclusions of proposition \ref{propmajorationsralpha}, especially the fact that by combining \eqref{26} and \eqref{propmajorationsralphaw} we know that $\uut \leqslant C(1+t)^{-\frac{1+\alpha}{2}}t^{-1/2}$ for all time $t>0$.

First of all, a fixed point argument shows that there exists a time $T>0$ such that $\w$ is continuous from $[0,T]$ into
\[
\{w:\Omega\to\RR \mid \|w\|_{\LL{1}} + \|rw\|_{\LL{1}} + \|r^\alpha zw\|_{\LL{1}} <\infty\},
\]
read for example the development in appendix \ref{annexeprop4.1} adapted with the norm
\[
\|\w\|_{X_T} = \underset{0<t\leqslant T}{\sup}\left(t^{1/4} \wwtp{}{4/3} + t^{1/4}\wwtp{r}{4/3} + t^{1/4}\wwtp{r^\alpha z}{4/3}\right)
\]
on $X_T$ and using \eqref{propSdivralphaz} together with interpolations inequalities when adapting equations \eqref{54} and \eqref{55}.

Let us then show that $\wwt{r^\alpha z}$ is well-defined and bounded on $[0,+\infty[$, by referring to lemma \ref{lemmaGronwallbis-gamma} in appendix and using the estimate $\uus \leqslant C(1+s)^{-\frac{1+\alpha}{2}}s^{-1/2}$ as follows. For every $t\geqslant0$ and $t_0>0$, 
\begin{align*}
\wwtt{r^\alpha z}{t_0+t}
&\underset{\eqref{duhameluni}}{\leqslant} 
\|r^\alpha zS(t)\w(t_0)\|_{\LL{1}} + \int_{t_0}^{t_0+t}\|r^\alpha zS(t_0+t-s)\dii(u(s)\w(s))\|_{\LL{1}}\,ds
\\ &\hskip-1.2em \underset{\eqref{propSralphaz}\eqref{propSdivralphaz}}{\leqslant} 
C\wwtt{r^\alpha(r+|z|)\,}{t_0}
\\ &\qquad
+ C\int_{t_0}^{t_0+t}(t_0+t-s)^{\frac{\alpha-1}{2}}\uus\wws{r}\,ds
\\ &\qquad
+ C\int_{t_0}^{t_0+t}(t_0+t-s)^{-\frac{1}{2}}\uus\wws{r^\alpha z}\,ds
\\ &\underset{\eqref{lemmamajorationsrrw}}{\leqslant} 
C + C\int_{t_0}^{t_0+t}(t_0+t-s)^{\frac{\alpha-1}{2}}(1+s)^{-\frac{1+\alpha}{2}}s^{-\frac{1}{2}}\,ds
\\ &\qquad 
+ C\int_{t_0}^{t_0+t}(t_0+t-s)^{-\frac{1}{2}}(1+s)^{-\frac{1+\alpha}{2}}s^{-\frac{1}{2}}\wws{r^\alpha z}\,ds
\end{align*}
where $\int_0^{t_0+t}(t_0+t-s)^{\frac{\alpha-1}{2}}(1+s)^{-\frac{1+\alpha}{2}}s^{-\frac{1}{2}}\,ds \leqslant 10$ and where, after the change of variable $s\mapsto t_0+s$ in the last integral, we can write $(1+t_0+s)^{-\frac{1+\alpha}{2}} \leqslant (1+s)^{-\frac{1}{2}}$ and $(t_0+s)^{-\frac{1}{2}} \leqslant \min(1,t_0)^{-\frac{1}{2}} (1+s)^{-\frac{1}{2}}$. 
Hence 
\[
\wwtt{r^\alpha z}{t_0+t} \leqslant C + C\int_0^{t}(t-s)^{-\frac{1}{2}}(1+s)^{-1}\wwtt{r^\alpha z}{t_0+s}\,ds,
\]
and we can apply lemma \ref{lemmaGronwallbis-gamma}. Let us note that the constants only depend on $\wo$ and the choice of $t_0$, but not on $\alpha$.
\end{proof}

\begin{proof}[Proof of proposition \ref{propwr2+alpha}]
Let us recall the definitions of $G_1$ and $S_1$, given by  \eqref{defG1} and \eqref{defS1}. Using lemma \ref{lemmamajorationr1alpha+ralphaz} and knowing \eqref{S1divuwisSdivuw}, let us carry out the following calculation, starting from \eqref{duhameldifference}:
\begin{align*}
&\left\|\w(t) - I_0G_1\left(\frac{\cdot}{\sqrt{t}}\right)\ t^{-2}\right\|_{\LL{q}} 
\\& \hskip11.8em \underset{\eqref{duhameldifference}}{\leqslant} 
\|S_1(t)\wo\|_{\LL{q}} + \int_0^{t/2}\|S_1(t-s)\dii(u(s)\w(s))\|_{\LL{q}}\,ds
\\& \hskip12em \qquad
+ \int_{t/2}^t\|S(t-s)\dii(u(s)\w(s))\|_{\LL{q}}\,ds
\\\el{2.2em}& \hskip9.8em \underset{\eqref{propS1r}\eqref{propS1divr}\eqref{47}}{\leqslant} 
\frac{2C}{\alpha} \woo{r^\alpha(r^2+r|z|)\,} t^{\frac{1}{q}-\frac{4+\alpha}{2}}
\\& \hskip12em \qquad
+ \int_0^{t/2}\frac{2C}{\alpha}(t-s)^{\frac{1}{q}-\frac{4+\alpha}{2}}\uus{}\wws{r^\alpha(r+|z|)\,}\,ds
\\& \hskip12em \qquad
+ \int_{t/2}^tC(t-s)^{-\frac{1}{2}}\uus{}\wwsp{}{q}\,ds 
\\& \hskip10.8em \underset{\eqref{uLinftyr2}\eqref{lemmamajorationsr2w}}{\leqslant} 
C\,t^{\frac{1}{q}-\frac{4+\alpha}{2}} + C \left(\frac{t}{2}\right)^{\frac{1}{q}-\frac{4+\alpha}{2}} \int_0^{+\infty}(1+s)^{-1}s^{-\frac{1}{2}}\,ds
\\& \hskip12em \qquad
+ C\int_{t/2}^t(t-s)^{-\frac{1}{2}}s^{-1}s^{\frac{1}{q}-\frac{3+\alpha}{2}}\,ds
\\\el{1.3em}& \hskip12.4em \leqslant 
C\,t^{\frac{1}{q}-2-\frac{\alpha}{2}}
\end{align*}
for any $t>0$. Let us note that the last constant depends on the initial data $\wo$ and on the quantity $\frac{1}{\alpha}$.
\end{proof}

Considering the limit case $\alpha=1$ in proposition \ref{propwr2+alpha}, in particular $\woorz{r^2z}$ and $\woorz{r(r+|z|)\,}$ are finite so the moment $J(t) = \int_\Omega r^2z\,\w(t,r,z)\,drdz$ of the vorticity is bounded.

\begin{corollary}
Take $\w$ the solution of \eqref{vor} with initial condition $\wo\in\LL{1}$.\\
If $(r,z)\mapsto (r^2|z|+r^2+r|z|)\wo(r,z)$ is in $\LL{1}$, then $J$ is bounded on $[0,+\infty[$. To be more precise, $J$ is differentiable and tends to the finite limit
\begin{equation}\label{defJinfty}
J_\infty = J_0+\int_0^{+\infty}\int_\Omega(2rz\,u_r(t,r,z) + r^2u_z(t,r,z))\,\w(t,r,z)\,drdzdt
\end{equation}
when time tends to infinity.
\end{corollary}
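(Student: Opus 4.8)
The plan is to differentiate $J$ and to show that its derivative is integrable over $[0,+\infty[$. First I would justify that $J$ is differentiable at all: exactly as for $I$ in the remark following \eqref{ItisI0}, this rests on $\w$ belonging to $\CC^1(]0,+\infty[,L^1(\Omega,(1+r^2)(1+|z|)\,drdz))$, which the local-existence-of-moments argument of appendix \ref{annexeprop4.1} provides once the relevant initial moments are finite — here the hypothesis $(r^2|z|+r^2+r|z|)\wo\in\LL{1}$ makes $J_0=J(0)$ finite and, once propagated, keeps $\wwtrz{r^2|z|}$ finite so that $J(t)$ is well defined for every $t>0$. Granting this regularity, all the integrations by parts below are legitimate, the required weighted moments of $\w$ being finite and the boundary terms at $r=0$ (where $\w=0$), at $r=+\infty$ and at $z=\pm\infty$ all vanishing.

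Using equation \eqref{vor}, I would write
\[
\dot J(t) = \int_\Omega r^2z\,\partial_t\w\,drdz = \int_\Omega r^2z\left(\Delta-\tfrac{1}{r^2}\right)\w\,drdz - \int_\Omega r^2z\,\dii(u\w)\,drdz.
\]
For the linear part I would use the splitting $\Delta-\frac{1}{r^2} = \partial_r\!\big(\frac1r\partial_r(r\,\cdot\,)\big) + \partial_z^2$: integrating by parts in $r$ against the weight $r^2$, then twice in $z$ against the weight $z$, both contributions vanish, just as the analogous integral vanished in the computation of $\dot I$. For the nonlinear part, integrating $\partial_r(u_r\w)$ by parts against $r^2z$ yields $\int_\Omega 2rz\,u_r\w$ and integrating $\partial_z(u_z\w)$ by parts against $r^2z$ yields $\int_\Omega r^2u_z\w$. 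Hence
\[
\dot J(t) = \int_\Omega \big(2rz\,u_r(t,r,z) + r^2u_z(t,r,z)\big)\,\w(t,r,z)\,drdz,
\]
which is precisely the integrand appearing in \eqref{defJinfty}.

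It then remains to bound this derivative. Estimating crudely with the Biot--Savart velocity,
\[
|\dot J(t)| \leqslant \uut \int_\Omega (2r|z| + r^2)\,|\w(t,r,z)|\,drdz = \uut\,\big(2\,\wwt{r|z|} + \wwt{r^2}\big),
\]
where $\wwt{r^2}$ is bounded by lemma \ref{lemmamajorationsr2} (estimate \eqref{lemmamajorationsr2r2w} with $q=1$), and $\wwt{r|z|}$ is bounded by lemma \ref{lemmamajorationr1alpha+ralphaz} applied with $\alpha=1$, whose hypothesis $(r^2+r|z|)\wo\in\LL{1}$ is contained in ours. Combined with $\uut\leqslant C(1+t)^{-1}t^{-1/2}$ from \eqref{uLinftyr2}, this gives $|\dot J(t)|\leqslant C(1+t)^{-1}t^{-1/2}$, whose singularity $t^{-1/2}$ at the origin and decay $t^{-3/2}$ at infinity are both integrable. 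Therefore $J(t)=J_0+\int_0^t\dot J(s)\,ds$ stays bounded on $[0,+\infty[$ and converges, as $t\to+\infty$, to the finite limit $J_\infty$ given by \eqref{defJinfty}.

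The main obstacle is the content of the first paragraph, namely securing the differentiability of $J$ together with the validity of the integrations by parts: this requires that $\wwtrz{r^2|z|}$ and the moments controlling the boundary terms remain finite for all $t>0$, which is exactly what the moment-propagation machinery of appendix \ref{annexeprop4.1} (and the analogues of lemmas \ref{lemmamajorationsr}--\ref{lemmamajorationr1alpha+ralphaz}) is designed to supply. Once this regularity is granted, the remaining computation and estimate are routine.
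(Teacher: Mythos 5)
Your proposal is correct and follows essentially the same route as the paper: differentiate $J$ using \eqref{vor} (the linear part vanishing by integration by parts, the nonlinear part giving $\int_\Omega(2rz\,u_r+r^2u_z)\w$), then bound $|\dot J(t)|$ by $\|u(t)\|_{\LL{\infty}}$ times the moments controlled by lemmas \ref{lemmamajorationsr2} and \ref{lemmamajorationr1alpha+ralphaz}, and conclude by integrability of $(1+t)^{-1}t^{-1/2}$. The regularity concern you flag in your first paragraph is exactly what the paper delegates to remark \ref{rkC1}, so nothing is missing.
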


\begin{proof}
It has to be said first that $\w$ is differentiable in time into $L^1(\Omega,(1+r^2+r^2|z|)drdz)$, for reasons given in remark \ref{rkC1}. Then $J$ is differentiable and, referring to the vorticity equation \eqref{vor} and using that $\dii(u\w) = \partial_r(u_r\w) + \partial_z(u_z\w)$, a direct calculation gives its evolution for all time $t\geqslant0$:
\begin{equation}\label{J't}
\dot{J}(t) = \int_\Omega r^2z\,\partial_t\w(t) = -\int_\Omega r^2z\,\dii(u(t)\w(t)) = \int_\Omega (2rz\,u_r + r^2u_z)\,\w.
\end{equation}
Hence $|\dot{J}(t)| \leqslant 3\|u(t)\|_{\LL{\infty}}\,\wwt{r(r+|z|)\,}$, and from lemma \ref{lemmamajorationr1alpha+ralphaz} and estimate \eqref{uLinftyr2} follows directly the existence of a positive constant $C_{\wo}$ such that
\begin{equation}\label{majorationJ'}
|\dot{J}(t)| \leqslant C_{\wo}(1+t)^{-1}t^{-1/2},
\end{equation}
so $J$ is bounded on $[0,+\infty[$ and the expression of $J_\infty$ \eqref{defJinfty} holds.
\end{proof}

\subsection{Second-Order asymptotics}

At that point, we want to express the next term in the asymptotic expansion of $\w$. The object of the next propositions is to write for all $(t,r,z)$ in $]0,+\infty[*\times\Omega$ 
\begin{equation}\label{defwtilde}
\w(t,r,z) = \frac{I_0r}{16\sqrt{\pi}}e^{\mbox{$-\frac{r^2+z^2}{4t}$}}\ t^{-5/2} + \frac{J(t)rz}{32\sqrt{\pi}}e^{\mbox{$-\frac{r^2+z^2}{4t}$}}\ t^{-7/2} + \widetilde{\w}(t,r,z)
\end{equation}
where the remainder $\widetilde{\w}(t,r,z)$ is a $o(t^{\frac{1}{p}-\frac{5}{2}})$ in every $\LL{p}$ when $t$ goes to infinity (this is proposition \ref{propwr3}) and even a $O(\ln(t)t^{\frac{1}{p}-3})$ if enough spatial localisation is asked on the initial data (this is proposition \ref{propwr4}). Before moving towards the demonstration of these results, there are a few things to say about $\widetilde{\w}$.

Given that $\widetilde{\w}$ defined by \eqref{defwtilde} is a difference between $\CC^\infty$ maps, it is itself in $\CC^\infty(]0,+\infty[\times\Omega)$. Moreover, given that $\w$ satisfies the vorticity equation \eqref{vor} and that $(t,r,z)\mapsto G_1(\frac{r}{\sqrt{t}},\frac{z}{\sqrt{t}}) t^{-2}$ and $(t,r,z)\mapsto G_2(\frac{r}{\sqrt{t}},\frac{z}{\sqrt{t}}) t^{-\frac{5}{2}}$ are solutions to the linearised equation \eqref{vorlin}, 
then for every $t>0$
\begin{equation}\label{voromegatilda}
\partial_t\widetilde{\w} = \Delta\widetilde{\w} - \frac{\widetilde{\w}}{r^2} - \dii(u\w) - \frac{\dot{J}(t)}{t^{5/2}}G_2\left(\frac{\cdot}{\sqrt{t}}\right)
\end{equation}
which is equivalent to the integral formulation
\begin{equation}\label{duhamelomegatilde}
\widetilde{\w}(t) = S_2(t)\wo - \int_0^tS(t-s)\left[ \dii(u(s)\w(s)) + \frac{\dot{J}(s)}{s^{5/2}}G_2\left(\frac{\cdot}{\sqrt{s}}\right) \right]ds
\end{equation}
where $S_2$ is the family of operators defined as \eqref{defS2} and $G_1$ and $G_2$ are the Gaussian-like functions \eqref{defG1} and \eqref{defG2}.

Let us pay special attention to the various possible ways to write the integrand in equation \eqref{duhamelomegatilde}, because this will be helpful in further calculation. First, let us remember that $(t,r,z)\mapsto G_2(\frac{r}{\sqrt{t}},\frac{z}{\sqrt{t}})t^{-\frac{5}{2}}$ can be taken out of the semigroup, as noted in \eqref{rkproptyG2}, and especially that $\int_{t_0}^{t_1}S(t-s)[\dot{J}(s)G_2(\frac{\cdot}{\sqrt{s}})s^{-\frac{5}{2}}]\,ds = (J(t_1)-J(t_0))G_2(\frac{\cdot}{\sqrt{t}})t^{-\frac{5}{2}}$, whatever $0\leqslant t_0\leqslant t_1\leqslant t$ are. Let us notice that it can also be put inside the $\dii$ by writing
\begin{equation}\label{G2isadiv}
\frac{\dot{J}(s)}{s^{5/2}}G_2\left(\frac{r}{\sqrt{s}},\frac{z}{\sqrt{s}}\right) = \dii\left(\frac{\dot{J}(s)}{32\sqrt{\pi}\,s^2}\begin{pmatrix}\frac{-z}{\sqrt{s}}\\\frac{-r}{\sqrt{s}}\end{pmatrix}e^{\mbox{$-\frac{1}{4s}(r^2+z^2)$}} \right).
\end{equation}
Finally, in view of the fact that $\int_\Omega r^2\dii(u(s)\w(s)) = 0$, that $\int_\Omega r^2z\dii(u(s)\w(s)) = -\dot{J}(s)$ and that $\int_\Omega r^2zG_2 =1$ (see respectively \eqref{I't=0}, \eqref{J't} and \eqref{rkG2moment}), the relation
\begin{equation}\label{S2divuw+J'G2}
S(t-s)\!\left[\dii(u(s)\w(s)) + \frac{\dot{J}(s)}{s^{5/2}}G_2\left(\frac{\cdot}{\sqrt{s}}\right) \right] = S_2(t-s)\!\left[\dii(u(s)\w(s)) + \frac{\dot{J}(s)}{s^{5/2}}G_2\left(\frac{\cdot}{\sqrt{s}}\right) \right]
\end{equation}
holds on $\Omega$ for any $t\geqslant s>0$. Obviously, \eqref{S2divuw+J'G2} remains true by substituting $S_1$ for $S_2$.

This being said, let us start the study of $\widetilde{\w}$ under the hypotheses of proposition \eqref{propwr2+alpha} when $\alpha=1$.

\begin{proposition}\label{propwr3}
Take $\w$ the solution of \eqref{vor} with initial data $\wo\in\LL{1}$.\\
If $(r,z)\mapsto (r^3 + r^2|z| + r|z|)\wo(r,z)$ is in $\LL{1}$ then
\begin{equation}\label{propwr3convergence}
\w(t,r,z) = \frac{rI_0}{16\sqrt{\pi}}e^{\mbox{$-\frac{r^2+z^2}{4t}$}}\ t^{-5/2} + \frac{rzJ(t)}{32\sqrt{\pi}}e^{\mbox{$-\frac{r^2+z^2}{4t}$}}\ t^{-7/2} + \underset{t\to+\infty}{o}\left(t^{\frac{1}{p}-\frac{5}{2}}\right)
\end{equation}
in $\LL{p}$ for every $p\in[1,+\infty]$.
\end{proposition}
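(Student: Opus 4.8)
The goal is to prove that the remainder $\widetilde{\w}$ introduced in \eqref{defwtilde} is $o(t^{1/q-5/2})$ in $\LL{q}$ as $t\to+\infty$ for every $q\in[1,+\infty]$, which is \eqref{propwr3convergence} after renaming $q$ into $p$. I would mimic the proof of proposition \ref{propwr2}, restarting the evolution at a fixed time $t_0>0$: using the uniqueness identity \eqref{duhameluni} to write $\w(t)=S(t-t_0)\w(t_0)-\int_{t_0}^{t}S(t-s)\dii(u(s)\w(s))\,ds$ and then subtracting the two self-similar profiles of \eqref{defwtilde}. The decisive input is lemma \ref{lemmaconvergenceS2} applied to the semigroup acting on $\w(t_0)$, whose weighted moments $(r^3+r^2|z|)\w(t_0)$ are finite for $t_0>0$ by the propagation of moments of appendix \ref{annexeprop4.1}. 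With $\tau=t-t_0$, recalling $I(t_0)=I_0$ from \eqref{ItisI0} and that $J(t_0)$ is finite by the corollary above, it gives
\[
S(t-t_0)\w(t_0)=I_0\,G_1\!\left(\tfrac{\cdot}{\sqrt{\tau}}\right)\tau^{-2}+J(t_0)\,G_2\!\left(\tfrac{\cdot}{\sqrt{\tau}}\right)\tau^{-5/2}+R(t),
\]
with $\|R(t)\|_{\LL{q}}=o(\tau^{1/q-5/2})=o(t^{1/q-5/2})$.

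Substituting this expansion, $\widetilde{\w}(t)$ splits into $R(t)$, the two profile differences $I_0[G_1(\tfrac{\cdot}{\sqrt{\tau}})\tau^{-2}-G_1(\tfrac{\cdot}{\sqrt{t}})t^{-2}]$ and $J(t_0)[G_2(\tfrac{\cdot}{\sqrt{\tau}})\tau^{-5/2}-G_2(\tfrac{\cdot}{\sqrt{t}})t^{-5/2}]$, the discrepancy $[J(t_0)-J(t)]G_2(\tfrac{\cdot}{\sqrt{t}})t^{-5/2}$, and the Duhamel integral. Since $(t,r,z)\mapsto G_1(\tfrac{\cdot}{\sqrt{t}})t^{-2}$ and $(t,r,z)\mapsto G_2(\tfrac{\cdot}{\sqrt{t}})t^{-5/2}$ solve the linear equation \eqref{vorlin}, differentiating them in the scale variable between $\tau$ and $t$ bounds the two profile differences by $Ct_0\,t^{1/q-3}$ and $Ct_0\,t^{1/q-7/2}$, which are $o(t^{1/q-5/2})$ for fixed $t_0$; and $\|R(t)\|_{\LL{q}}$ is $o(t^{1/q-5/2})$ as well. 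For the discrepancy I would use $J(t)\to J_\infty$ together with $|J_\infty-J(t_0)|\leq\int_{t_0}^{+\infty}|\dot{J}(s)|\,ds\leq Ct_0^{-1/2}$, obtained from \eqref{majorationJ'}, so that this term is bounded in $\LL{q}$ by $Ct_0^{-1/2}t^{1/q-5/2}$, a genuine tail in $t_0$.

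It remains to estimate $\int_{t_0}^{t}S(t-s)\dii(u(s)\w(s))\,ds$, which I would cut at $s=t/2$. On $[t/2,t]$, estimate \eqref{47} combined with the bound \eqref{uLinftyr2} on $\uus$ and the decay \eqref{lemmamajorationsr2w} of $\w$ (choosing the source exponent so that the kernel singularity stays integrable) yields a contribution of order $t^{1/q-3}=o(t^{1/q-5/2})$. On $[t_0,t/2]$ the key observation is that $\int_\Omega r^2\dii(u\w)=0$ by \eqref{I't=0}, so that $S(t-s)\dii(u\w)=S_1(t-s)\dii(u\w)$ and estimate \eqref{propS1divr} applies with $\beta=1$, $p=1$; the weighted norm it requires is $\wws{(r^2+r|z|)\,}\uus$, and $\wws{(r^2+r|z|)\,}$ stays bounded by lemma \ref{lemmamajorationr1alpha+ralphaz}. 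As $(t-s)\geq t/2$ on this range, this produces at most $Ct^{1/q-5/2}\int_{t_0}^{+\infty}(1+s)^{-1}s^{-1/2}\,ds\leq Ct^{1/q-5/2}t_0^{-1/2}$, once again a tail in $t_0$.

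Collecting all contributions gives $\limsup_{t\to+\infty}t^{5/2-1/q}\|\widetilde{\w}(t)\|_{\LL{q}}\leq Ct_0^{-1/2}$ for every $t_0>0$, and letting $t_0\to+\infty$ closes the proof. The main obstacle is to extract a little-$o$ rather than a big-$O$: a direct bound of the Duhamel integral only gives $O(t^{1/q-5/2})$, and the sharpening comes from the restart at $t_0$, which confines the order-one contribution to lemma \ref{lemmaconvergenceS2}, itself a little-$o$ statement. The second, structural, difficulty is that $S_2$ cannot be used on the nonlinear integral, since its estimate \eqref{propS2divr} would demand moments such as $r^3\w$ or $z^2\w$ that are not available under the hypothesis $(r^3+r^2|z|+r|z|)\wo\in\LL{1}$; the needed second-moment cancellation is therefore taken not from the integral but from the linear semigroup acting on $\w(t_0)$, which is exactly what the $t_0$-restart together with lemma \ref{lemmaconvergenceS2} provides.
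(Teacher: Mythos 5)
Your argument is correct and follows essentially the same route as the paper's proof: restart the Duhamel formula at a fixed $t_0$, apply lemma \ref{lemmaconvergenceS2} to $S(t-t_0)\w(t_0)$, split the nonlinear integral at $t/2$, use the cancellation $\int_\Omega r^2\dii(u\w)=0$ to replace $S$ by $S_1$ on $[t_0,t/2]$ via \eqref{propS1divr}, and conclude by letting $t_0\to+\infty$ in the resulting $\limsup\leqslant Ct_0^{-1/2}$ bound. The only difference is bookkeeping: you extract the discrepancy $[J(t_0)-J(t)]\,G_2(\cdot/\sqrt{t})\,t^{-5/2}$ as an explicit term controlled by \eqref{majorationJ'}, whereas the paper keeps $\dot{J}(s)s^{-5/2}G_2(\cdot/\sqrt{s})$ inside the integrand (via \eqref{duhamelomegatilde}, \eqref{G2isadiv} and \eqref{rkproptyG2}) and estimates it within its terms $T_4$ and $T_6$ --- the two formulations are identical by \eqref{rkproptyG2}.
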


\begin{proof}
In view of \eqref{defwtilde}, the goal is to show that $t^{\frac{5}{2}}\,\widetilde{\w}(t,\cdot\sqrt{t},\cdot\sqrt{t})$ tends to zero in $\LL{p}$ as $t$ goes to infinity. Let us recall for this the definitions of $G_1$, $S_1$, $G_2$ and $S_2$ from \eqref{defG1}, \eqref{defS1}, \eqref{defG2}, and \eqref{defS2}.

Let us fix any $t_0>0$ and see that, given property \eqref{rkproptyG2} of $G_2$, \eqref{duhamelomegatilde} can as well be written as
\begin{multline*}
\widetilde{\w}(t) = 
S(t-t_0)\w(t_0) - I_0\,G_1\!\left(\frac{\cdot}{\sqrt{t}}\right) t^{-2} - J(t_0)\,G_2\!\left(\frac{\cdot}{\sqrt{t}}\right) t^{-\frac{5}{2}}\\
- \int_{t_0}^tS(t-s)\left[\dii(u(s)\w(s)) + \frac{\dot{J}(s)}{s^{5/2}} G_2\!\left(\frac{\cdot}{\sqrt{s}}\right)\right] ds
\end{multline*}
for every time $t > t_0$. Therefore, for any $p$ in $[1,+\infty]$ and $t> 2t_0$, $\|t^{\frac{5}{2}}\widetilde{\w}(t,\cdot\sqrt{t},\cdot\sqrt{t})\|_{\LL{p}}$ is less than or equal to the sum of the six terms 
\begin{align*}
&
T_1(t,t_0,p) = \|t^{\frac{5}{2}}S_2(t-t_0)[\w(t_0)](\cdot\sqrt{t})\|_{\LL{p}},
\\&
T_2(t,t_0,p) = \left\|I_0\sqrt{t}\left(1-\frac{t_0}{t}\right)^{-2}G_1\left(\cdot\left(1-\frac{t_0}{t}\right)^{-\frac{1}{2}}\right) - I_0\sqrt{t}\,G_1\right\|_{\LL{p}},
\\& 
T_3(t,t_0,p) = \left\|J(t_0)\left(1-\frac{t_0}{t}\right)^{-\frac{5}{2}}G_2\left(\cdot\left(1-\frac{t_0}{t}\right)^{-\frac{1}{2}}\right) - J(t_0)\,G_2\right\|_{\LL{p}},
\\& 
T_4(t,t_0,p) = \int_{t_0}^{t/2}t^{\frac{5}{2}}\left\|S_1(t-s)\dii \left(u(s)\,\w(s) + \frac{\dot{J}(s)}{32\sqrt{\pi}\,s^2}\begin{pmatrix}\frac{-z}{\sqrt{s}}\\\frac{-r}{\sqrt{s}}\end{pmatrix}e^{\mbox{$-\frac{1}{4s}(r^2+z^2)$}}\right)\left(\cdot\sqrt{t}\right)\right\|_{\LL{p}}\!ds,
\\& 
T_5(t,p) = \int_{t/2}^tt^{\frac{5}{2}}\left\|S(t-s)\dii(u(s)\,\w(s))\left(\cdot\sqrt{t}\right)\right\|_{\LL{p}}ds,
\\& 
T_6(t,p) = \int_{t/2}^t|\dot{J}(s)|\times\|G_2\|_{\LL{p}}ds,
\end{align*}
among which five tend to zero when $t$ goes to infinity and one will have to be dominated by a decreasing function of $t_0$. Let us remember what has been discussed right above between equation \eqref{duhamelomegatilde} and relation \eqref{S2divuw+J'G2} to understand why the integrands of $T_4$ and $T_6$ can be written as they are.

The convergence of $T_2$ and $T_3$ to zero is obvious when $p=+\infty$, and is a straightforward application of the Lebesgue dominated convergence theorem when $p<\infty$. The first term $T_1$ equals $(\frac{t}{t-t_0})^{\frac{5}{2}-\frac{1}{p}}\|(t-t_0)^{\frac{5}{2}}S_2(t-t_0)[\w(t_0)](\cdot\sqrt{t-t_0}\,)\|_{\LL{p}}$ with a change of variables, and so converges to zero according to lemma \ref{lemmaconvergenceS2}. The fifth one satisfies
\begin{align*}
T_5
&\kern-0.2em \underset{\eqref{47}}{\leqslant} 
C\int_{t/2}^t t^{\frac{5}{2}-\frac{1}{p}}(t-s)^{-\frac{1}{2}}\uus{}\wwsp{}{p}\,ds
\\& \hskip-1.2em \underset{\eqref{uLinftyr2}\eqref{lemmamajorationsr2w}}{\leqslant} 
C\int_{t/2}^t t^{\frac{5}{2}-\frac{1}{p}}(t-s)^{-\frac{1}{2}}s^{-\frac{3}{2}}s^{\frac{1}{p}-2}\,ds
\\& \kern0.4em \leqslant 
C\left(\int_{1/2}^{1}(1-s)^{-\frac{1}{2}}s^{\frac{1}{p}-\frac{7}{2}}ds\right) t^{-\frac{1}{2}},
\end{align*}
and $T_6 \leqslant Ct^{-\frac{1}{2}}$ given \eqref{majorationJ'}, so both tend to zero. Finally, thanks to lemma \ref{lemmamajorationr1alpha+ralphaz} the fourth term can be bounded as
\begin{align*}
T_4
&\hskip-0.2em \underset{\eqref{propS1divr}}{\leqslant} 
2C\int_{t_0}^{t/2} t^{\frac{5}{2}-\frac{1}{p}}(t-s)^{\frac{1}{p}-\frac{5}{2}}\uus{}\wws{r(r+|z|)}\,ds
\\& \qquad
+ 2C\int_{t_0}^{t/2} t^{\frac{5}{2}-\frac{1}{p}}(t-s)^{\frac{1}{p}-\frac{5}{2}}\frac{|\dot{J}(s)|}{s^2}\left\|r(r+|z|)\begin{pmatrix}\frac{-z}{\sqrt{s}}\\\frac{-r}{\sqrt{s}}\end{pmatrix}e^{\mbox{$-\frac{r^2+z^2}{4s}$}}\right\|_{\LL{1}}ds
\\& \hskip-1.2em \underset{\eqref{uLinftyr2}\eqref{majorationJ'}}{\leqslant} 
4C\int_{t_0}^{t/2}t^{\frac{5}{2}-\frac{1}{p}}\left(\frac{t}{2}\right)^{\frac{1}{p}-\frac{5}{2}} (1+s)^{-1}s^{-\frac{1}{2}}ds
\\& \kern0.4em \leqslant 
C\left(\int_0^{+\infty}(1+s)^{-1}s^{-\frac{1}{4}}ds\right) t_0^{-\frac{1}{4}}
\end{align*}
where the constants do not depend on $t_0$. Thus
\[
\underset{t\to+\infty}{\limsup}\ \|t^{\frac{5}{2}}\widetilde{\w}(t,\cdot\sqrt{t},\cdot\sqrt{t})\|_{\LL{p}} \leqslant 0 + 0 + 0 + C\,t_0^{-\frac{1}{4}} + 0 + 0
\]
for any $t_0>0$, which ends the proof. 
\end{proof}

\refstepcounter{theorem}
\paragraph{Remark \thetheorem.}\label{rkJinfty}
Within the scope of this article, each time the quantity $J(t)$ appears in the expansion of $\w$ it can be replaced by its limit $J_\infty$. This is because, in view of \eqref{majorationJ'}, for all $t>0$
\[
|J(t) - J_\infty| \leqslant \int_t^{+\infty}|\dot{J}(s)|ds  \leqslant C\int_t^{+\infty}s^{-\frac{3}{2}}ds = 2Ct^{-\frac{1}{2}}
\]
so the difference between $J(t)G_2(\frac{\cdot}{\sqrt{t}})t^{-\frac{5}{2}}$ and $J_\infty G_2(\frac{\cdot}{\sqrt{t}})t^{-\frac{5}{2}}$ is a $O(t^{\frac{1}{p}-3})$ in every $\LL{p}$; and it appears that in propositions \ref{propwr3}, \ref{propwr3+alpha} and \ref{propwr4} the precision of the expansions never exceeds a $O(t^{\frac{1}{p}-3})$. This remark, together with the previous proposition, conclude the proof of theorem \ref{theoremintro}.

\paragraph{}
The same way we did in proposition \ref{propwr2+alpha} concerning the difference 
$\w-I_0G_1(\frac{\cdot}{\sqrt{t}})t^{-2}$, the remainder $\widetilde{\w}$ can be shown 
less than a $o(t^{\frac{1}{q}-\frac{5}{2}})$ by assuming more localisation on the initial vorticity $\wo$.

\begin{proposition}\label{propwr3+alpha}
Take $\w$ the solution of \eqref{vor} with initial condition $\wo\in\LL{1}$.\\
Take $\alpha$ in $]0,1[$. If $(r,z)\mapsto r^\alpha(r^3+rz^2+z^2)\wo(r,z)$ is in $\LL{1}$ then there exists a positive constant $C_{\wo,\alpha}$ such that for every $q\in[1,+\infty]$ and all $t>0$ 
\begin{equation}
\|\widetilde{\w}(t)\|_{\LL{q}} \leqslant C_{\wo,\alpha}\,t^{\frac{1}{q}-\frac{5}{2}-\frac{\alpha}{2}}
\end{equation}
where
$\widetilde{\w}$ is given by \eqref{defwtilde}.
\end{proposition}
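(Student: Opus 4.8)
The plan is to follow the architecture of the proof of proposition \ref{propwr2+alpha}, replacing the first-order objects ($S_1$, \eqref{propS1r}, \eqref{propS1divr}) by their second-order counterparts: the integral identity \eqref{duhamelomegatilde} for $\widetilde{\w}$, the operator $S_2$, and the estimates \eqref{propS2r}--\eqref{propS2divr}. The one genuinely new ingredient, which I expect to be the main obstacle, is a bound on the weight-two moment
\[
\wwt{r^\alpha(r^2+z^2)} = \wwt{r^{2+\alpha}} + \wwt{r^\alpha z^2} \leqslant C_{\wo,\alpha}\,(1+t)^{\alpha/2}, \qquad t\geqslant 0.
\]
The growth $(1+t)^{\alpha/2}$ cannot be improved to boundedness, since the leading profile $G_1(\tfrac{\cdot}{\sqrt t})t^{-2}$ already has an $r^{2+\alpha}$-moment of size $t^{\alpha/2}$ (a rescaling $r=\sqrt t\,\xi,\ z=\sqrt t\,\eta$ shows this). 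I would establish this exactly as in lemmas \ref{lemmamajorationsr} and \ref{lemmamajorationr1alpha+ralphaz}: local existence of the moment by a fixed-point argument, a non-blow-up step, and a Grönwall estimate, now driven by the higher-moment semigroup bounds \eqref{propSr2alpha}, \eqref{propSralphaz2} for the linear part and \eqref{propSdivr2alpha}, \eqref{propSdivralphaz2} for $S(t)\dii(\cdot)$. The loop closes at weight two because the factor $u\w$ contributes only $\uus\,\wwt{r^\alpha(r^2+z^2)}$, the remaining pieces involving the already controlled $\wwt{r}$ (lemma \ref{lemmamajorationsr}) times an explicit power of the elapsed time; the initial moments $\woo{r^{2+\alpha}}$, $\woo{r^\alpha z^2}$, $\woo{r^2}$ all follow from $r^\alpha(r^3+rz^2+z^2)\wo\in\LL1$ and $\wo\in\LL1$.

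With this in hand, I start from \eqref{duhamelomegatilde} and split the time integral at $t/2$. On $[0,t/2]$ I use \eqref{S2divuw+J'G2} to turn $S(t-s)$ into $S_2(t-s)$, and \eqref{G2isadiv} to absorb the $G_2$-correction into the pseudo-divergence, so that the integrand becomes $S_2(t-s)\dii W(s)$ with $W(s) = u(s)\w(s) + \frac{\dot J(s)}{32\sqrt{\pi}\,s^2}\begin{pmatrix}-z/\sqrt s\\-r/\sqrt s\end{pmatrix}e^{-(r^2+z^2)/4s}$. This produces
\begin{multline*}
\|\widetilde{\w}(t)\|_{\LL q} \leqslant \|S_2(t)\wo\|_{\LL q} + \int_0^{t/2}\|S_2(t-s)\dii W(s)\|_{\LL q}\,ds \\
+ \int_{t/2}^t\left\|S(t-s)\Big[\dii(u(s)\w(s)) + \frac{\dot J(s)}{s^{5/2}}G_2\Big(\frac{\cdot}{\sqrt s}\Big)\Big]\right\|_{\LL q}\,ds.
\end{multline*}

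For the first term I apply \eqref{propS2r} with outer power $0$, $p=1$ and $\beta=1+\alpha$ (admissible since $1 < 1+\alpha \leqslant 2$), giving $C\,\woo{r^\alpha(r^3+rz^2)}\,t^{\frac1q-\frac52-\frac\alpha2}$, the weight being precisely a part of the hypothesis. For the $[0,t/2]$ integral I apply \eqref{propS2divr} with outer power $0$, $p=1$ and $\beta=\alpha$ (admissible since $0<\alpha\leqslant1$), obtaining the factor $(t-s)^{\frac1q-\frac{5+\alpha}2}\|r^\alpha(r^2+z^2)W(s)\|_{\LL1}$. Here I bound the $u\w$ contribution by $\uus\,\wwt{r^\alpha(r^2+z^2)}$ via \eqref{uLinftyr2} and the moment bound above, and the Gaussian contribution by $C|\dot J(s)|\,s^{\alpha/2}\leqslant C(1+s)^{-1}s^{(\alpha-1)/2}$ via \eqref{majorationJ'} and a rescaling. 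Since the exponent $\frac1q-\frac{5+\alpha}2$ is negative I replace $(t-s)$ by $t/2$ and pull the factor out; the remaining $s$-integral over $]0,+\infty[$ converges because both contributions decay like $s^{(\alpha-3)/2}$ at infinity, which is integrable \emph{exactly} when $\alpha<1$. This is where the strict inequality $\alpha<1$ enters, and the bound is $C\,t^{\frac1q-\frac52-\frac\alpha2}$.

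Finally, on $[t/2,t]$ I treat the two pieces separately. For $S(t-s)\dii(u\w)$ I use \eqref{47} with power zero, together with \eqref{uLinftyr2} for $\uus$ and \eqref{lemmamajorationsr2w} for $\wwsp{}{q}$; the resulting $s$-integral is $O(t^{\frac1q-3})$. For the $G_2$-piece I pull it out of the semigroup via \eqref{rkproptyG2}, so its contribution is $t^{\frac1q-\frac52}\|G_2\|_{\LL q}\int_{t/2}^t|\dot J(s)|\,ds = O(t^{\frac1q-3})$ by \eqref{majorationJ'}. Since $\alpha\leqslant1$ we have $t^{\frac1q-3}\leqslant t^{\frac1q-\frac52-\frac\alpha2}$ for large $t$, so summing the three contributions gives the claim for $t$ large. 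For $t$ in a bounded interval the right-hand side diverges as $t\to0$ while $\widetilde{\w}(t)$ blows up in $\LL q$ no faster than $t^{\frac1q-\frac52}$ (the rate of $\w$ and of its two Gaussian terms), so the inequality holds automatically there, completing the proof.
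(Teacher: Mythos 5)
Your proposal is correct and follows essentially the same route as the paper: the moment bound $\wwt{r^\alpha(r^2+z^2)\,}\leqslant C(1+t)^{\alpha/2}$ is exactly the paper's lemma \ref{lemmamajorationr2alpha+ralphaz2} (proved by the same fixed-point/Grönwall scheme with \eqref{propSr2alpha}--\eqref{propSdivralphaz2}), and the main estimate uses the same decomposition of \eqref{duhamelomegatilde} at $t/2$ with \eqref{S2divuw+J'G2}, \eqref{G2isadiv}, \eqref{propS2r}, \eqref{propS2divr}, \eqref{47} and \eqref{majorationJ'}, with the integrability threshold $\alpha<1$ appearing in the same place. The only cosmetic difference is that you recover the claimed rate for small $t$ by a separate direct bound on $\widetilde{\w}$, whereas the paper slightly weakens the tail estimates so that the exponent $t^{\frac{1}{q}-\frac{5}{2}-\frac{\alpha}{2}}$ comes out uniformly in $t$; both are fine.
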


\begin{lemma}\label{lemmamajorationr2alpha+ralphaz2}
Take $\w$ the solution of \eqref{vor} with initial condition $\wo\in\LL{1}$.\\
Take $\alpha$ in $[0,1]$. If $(r,z)\mapsto r^\alpha(r^2+z^2)\wo(r,z)$ is in $\LL{1}$ then there exists a positive constant $C_{\wo}$ such that for all $t\geqslant0$ 
\begin{equation}\label{lemmamajorationr2alpha+ralphaz2wr2alpha+ralphaz2}
\wwt{r^\alpha(r^2+z^2)\,} \leqslant C_{\wo}(1+t)^{\alpha/2}.
\end{equation}
\end{lemma}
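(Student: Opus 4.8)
The plan is to reproduce, at one moment order higher, the three-step scheme of lemmas \ref{lemmamajorationsr}, \ref{lemmamajorationsr2} and \ref{lemmamajorationr1alpha+ralphaz}: local existence of the weighted moment by a fixed-point argument, then a Grönwall-type integral inequality closing the global bound. Set $m(t)=\|r^\alpha(r^2+z^2)\w(t)\|_{\LL{1}}$ and write $r^\alpha(r^2+z^2)=r^{2+\alpha}+r^\alpha z^2$. Since $\alpha\in[0,1]$, the hypothesis forces in particular $r^2\wo\in\LL{1}$ (split according to $r<1$ and $r\geqslant1$), so I may freely use lemma \ref{lemmamajorationsr2}: the bounds $\|r^2\w(t)\|_{\LL{1}}\leqslant C_{\wo}$ and $\|r\w(t)\|_{\LL{1}}\leqslant C_{\wo}(1+t)^{-1/2}$ (the $q=1$ case of \eqref{lemmamajorationsr2rw}), together with the velocity estimate $\uut\leqslant C(1+t)^{-1}t^{-1/2}$ of \eqref{uLinftyr2}. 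Local existence is routine and I would delegate it to appendix \ref{annexeprop4.1}, adapting the norm on $X_T$ so as to control the self-similar $\LL{4/3}$-norms of $\w$ and $r^\alpha(r^2+z^2)\w$ and using \eqref{propSdivr2alpha}, \eqref{propSdivralphaz2} with an interpolation when reproducing the contraction estimates.

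Fixing $t_0>0$, I would then start from \eqref{duhameluni} and bound the two moments separately. For the linear term, estimates \eqref{propSr2alpha} and \eqref{propSralphaz2} taken with $p=q=1$ give
\begin{equation*}
\|r^\alpha(r^2+z^2)S(t)\w(t_0)\|_{\LL{1}}\leqslant C\,t^{\alpha/2}\|r^2\w(t_0)\|_{\LL{1}}+C\,m(t_0)\leqslant C_{\wo,t_0}\,(1+t)^{\alpha/2},
\end{equation*}
the factor $t^{\alpha/2}$ being exactly the origin of the expected growth. For the nonlinear term, applying \eqref{propSdivr2alpha} and \eqref{propSdivralphaz2} with $\tau=t_0+t-s$ and $w=u(s)\w(s)$, and using $\|r^\beta u\w\|_{\LL{1}}\leqslant\uus\,\|r^\beta\w(s)\|_{\LL{1}}$ together with $\|r^{2+\alpha}\w\|_{\LL{1}}+\|r^\alpha z^2\w\|_{\LL{1}}=m$, the integrand is controlled by $C\tau^{-1/2}\uus\big(\tau^{(1+\alpha)/2}\|r\w(s)\|_{\LL{1}}+m(s)\big)$.

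Inserting $\uus\leqslant C(1+s)^{-1}s^{-1/2}$ and $\|r\w(s)\|_{\LL{1}}\leqslant C(1+s)^{-1/2}$, then substituting $s\mapsto t_0+s$ and using $(1+t_0+s)^{-1}(t_0+s)^{-1/2}\leqslant C_{t_0}(1+s)^{-3/2}$, the $\tau^{(1+\alpha)/2}$-piece integrates to $C\int_0^t(t-s)^{\alpha/2}(1+s)^{-2}\,ds\leqslant C(1+t)^{\alpha/2}$, while the remaining self-referential piece is $C\int_0^t(t-s)^{-1/2}(1+s)^{-3/2}m(t_0+s)\,ds$. Collecting everything,
\begin{equation*}
m(t_0+t)\leqslant C(1+t)^{\alpha/2}+C\int_0^t(t-s)^{-1/2}(1+s)^{-3/2}m(t_0+s)\,ds.
\end{equation*}
Dividing by $(1+t)^{\alpha/2}$ and using $(1+s)^{\alpha/2}(1+t)^{-\alpha/2}\leqslant1$ for $s\leqslant t$, the function $\phi(t)=(1+t)^{-\alpha/2}m(t_0+t)$ satisfies $\phi(t)\leqslant C+C\int_0^t(t-s)^{-1/2}(1+s)^{-3/2}\phi(s)\,ds$, to which lemma \ref{lemmaGronwallbis-gamma} applies and yields $\phi$ bounded, that is $m(t)\leqslant C_{\wo}(1+t)^{\alpha/2}$.

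The main obstacle is the bookkeeping of the nonlinear term: one must organise the splitting produced by \eqref{propSdivr2alpha}--\eqref{propSdivralphaz2} so that the unavoidable $\tau^{(1+\alpha)/2}$ factor is paired with the already-decaying lower moment $\|r\w\|_{\LL{1}}$ --- producing a clean $(1+t)^{\alpha/2}$ source --- while the top moment $m$ enters only through the kernel $(t-s)^{-1/2}(1+s)^{-3/2}$, which is simultaneously integrable and admissible for lemma \ref{lemmaGronwallbis-gamma}. The genuine novelty compared with the bounded-moment lemma \ref{lemmamajorationr1alpha+ralphaz} is the rescaling by $(1+t)^{-\alpha/2}$ forced before invoking Grönwall, which turns the growth bound into a boundedness statement for $\phi$.
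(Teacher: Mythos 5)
Your proposal is correct and follows essentially the same route as the paper: local existence via the adapted fixed-point norm, the linear term handled by \eqref{propSr2alpha}--\eqref{propSralphaz2} producing the $t^{\alpha/2}$ source from the bounded moment $\|r^2\w(t_0)\|_{\LL{1}}$, the nonlinear term split by \eqref{propSdivr2alpha}--\eqref{propSdivralphaz2} so that the $\tau^{(1+\alpha)/2}$ factor pairs with $\|r\w(s)\|_{\LL{1}}$ and the top moment only enters through the kernel $(t-s)^{-1/2}(1+s)^{-3/2}$, and finally the rescaling by $(1+t)^{-\alpha/2}$ before applying lemma \ref{lemmaGronwallbis-gamma}. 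The paper normalises by $(t_0+t)^{-\alpha/2}$ rather than $(1+t)^{-\alpha/2}$, which is an immaterial difference once $t_0$ is fixed.
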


\begin{proof}
Let $\woo{r^\alpha(r^2+z^2)\,}$ be finite, where $0\leqslant\alpha\leqslant1$. Let us show first that the quantity $\wwt{r^\alpha(r^2+z^2)\,}$ exists locally, then that $t\mapsto (1+t)^{-\alpha/2}\wwt{r^\alpha(r^2+z^2)\,}$ is bounded on $[0,+\infty[$ and thus does not blow up in finite time. Given that $\woo{r^2}<\infty$, we have access to the conclusions of lemma \ref{lemmamajorationsr2}.

First of all, a fixed point argument shows that there exists a time $T>0$ such that $\w$ is continuous from $[0,T]$ into
\[
\{w:\Omega\to\RR \mid \|w\|_{\LL{1}} + \|r^{2+\alpha}w\|_{\LL{1}} + \|r^\alpha z^2w\|_{\LL{1}} <\infty\}.
\]
read for example the development in appendix \ref{annexeprop4.1} adapted with the norm
\[
\|\w\|_{X_T} = \underset{0<t\leqslant T}{\sup}\left(t^{1/4} \wwtp{}{4/3} + t^{1/4}\wwtp{r^{2+\alpha}}{4/3} + t^{1/4}\wwtp{r^\alpha z^2}{4/3}\right)
\]
on $X_T$ and using \eqref{propSdivr2alpha} and \eqref{propSdivralphaz2} together with an interpolation inequality when adapting equations \eqref{54} and \eqref{55}.

Let us now show \eqref{lemmamajorationr2alpha+ralphaz2wr2alpha+ralphaz2} with the use of lemma \ref{lemmaGronwallbis-gamma}. Let us write for every $t\geqslant0$ and $t_0>0$ 
\begin{align*} 
& \wwtt{r^\alpha(r^2+z^2)\,}{t_0+t}
\\& \hskip11em \underset{\eqref{duhameluni}}{\leqslant} 
\|r^\alpha(r^2+z^2)S(t)\w(t_0)\|_{\LL{1}} 
\\& \hskip12em \qquad
+ \int_{t_0}^{t_0+t}\|r^\alpha(r^2+z^2)S(t_0+t-s)\dii(u(s)\w(s))\|_{\LL{1}}\,ds
\\& \hskip7.7em \underset{\eqref{propSr2alpha}\eqref{propSralphaz2}\eqref{propSdivr2alpha}\eqref{propSdivralphaz2}}{\leqslant} 
C\wwtt{r^2}{t_0}t^{\frac{\alpha}{2}} + C\wwtt{r^\alpha(r^2+z^2)\,}{t_0}
\\& \hskip12em \qquad
 + C\int_{t_0}^{t_0+t}(t_0+t-s)^{\frac{\alpha}{2}}\uus\wws{r}\,ds
\\& \hskip12em \qquad
+ C\int_{t_0}^{t_0+t}(t_0+t-s)^{-\frac{1}{2}}\uus\wws{r^\alpha(r^2+z^2)\,}\,ds
\\& \hskip9.8em \underset{\eqref{uLinftyr2}\eqref{lemmamajorationsr2rw}}{\leqslant} 
C(1+t^{\frac{\alpha}{2}}) + C\int_{t_0}^{t_0+t}(t_0+t-s)^{\frac{\alpha}{2}}(1+s)^{-1}s^{-\frac{1}{2}}(1+s)^{-\frac{1}{2}}\,ds
\\& \hskip12em \qquad
+ C\int_{t_0}^{t_0+t}(t_0+t-s)^{-\frac{1}{2}}(1+s)^{-1}s^{-\frac{1}{2}}\wws{r^\alpha(r^2+z^2)\,}\,ds
\end{align*}
where $t^{-\frac{\alpha}{2}}\times\int_0^t(t-s)^{\frac{\alpha}{2}}(1+s)^{-\frac{3}{2}}s^{-\frac{1}{2}}\,ds \leqslant 4$ and where, after the change of variable $s\mapsto t_0+s$ in the last integral, $(1+t_0+s)^{-1} \leqslant (1+s)^{-1}$ and $(t_0+s)^{-\frac{1}{2}} \leqslant \min(1,t_0)^{-\frac{1}{2}} (1+s)^{-\frac{1}{2}}$. Thus 
\begin{multline*}
\left\|\frac{r^\alpha(r^2+z^2)}{(t_0+t)^{\frac{\alpha}{2}}}\,\w(t_0+t)\right\|_{\LL{1}} \leqslant C\left(t_0^{-\frac{\alpha}{2}}+1\right) + 4C
\\
+ C \int_0^{t}(t-s)^{-\frac{1}{2}}(1+s)^{-\frac{3}{2}}\left\|\frac{r^\alpha(r^2+z^2)}{(t_0+s)^{\frac{\alpha}{2}}}\,\w(t_0+s)\right\|_{\LL{1}}ds
\end{multline*}
which is a case covered by lemma \ref{lemmaGronwallbis-gamma}. Let us note that the constants only depend on $\wo$ and on the choice of $t_0$. 
\end{proof}

\begin{proof}[Proof of proposition \ref{propwr3+alpha}]
Be sure to have in mind the definitions of $G_1$, $G_2$ and $S_2$ given by \eqref{defG1}, \eqref{defG2} and \eqref{defS2} respectively. Starting from \eqref{duhamelomegatilde} while remembering \eqref{S2divuw+J'G2} and \eqref{G2isadiv}, let us write for all $t>0$
\begin{align*}
\|\widetilde{\w}(t)\|_{\LL{q}} 
& \underset{\eqref{duhamelomegatilde}}{\leqslant}
\|S_2(t)\wo\|_{\LL{q}} 
\\ &\qquad
+ \int_0^{t/2}\left\|S_2(t-s)\dii \left( u(s)\w(s) + \frac{\dot{J}(s)}{32\sqrt{\pi}\,s^2} \begin{pmatrix}\frac{-z}{\sqrt{s}}\\\frac{-r}{\sqrt{s}}\end{pmatrix} e^{\mbox{$-\frac{1}{4s}(r^2+z^2)$}} \right)\right\|_{\LL{q}}\!ds
\\ &\qquad 
+ \int_{t/2}^t\|S(t-s)\dii(u(s)\w(s))\|_{\LL{q}}\,ds
\\ &\qquad 
+ \int_{t/2}^t|\dot{J}(s)|\left\|S(t-s)\left[G_2\left(\frac{\cdot}{\sqrt{s}}\right)s^{-\frac{5}{2}}\right]\right\|_{\LL{q}} ds
\end{align*}
so using respectively \eqref{propS2r}, \eqref{propS2divr}, \eqref{47} and \eqref{majorationJ'} in the four terms
\begin{align*}
\|\widetilde{\w}(t)\|_{\LL{q}} 
&
\leqslant \frac{2C}{\alpha} \woo{r^\alpha(r^3+rz^2)\,}\,t^{\frac{1}{q}-\frac{5+\alpha}{2}}
\\& \qquad 
+ \int_0^{t/2}\frac{2C}{\alpha}(t-s)^{\frac{1}{q}-\frac{5+\alpha}{2}} \uus{}\wws{r^\alpha(r^2+z^2)\,}\,ds 
\\& \qquad 
+ \int_0^{t/2}2C(t-s)^{\frac{1}{q}-\frac{5+\alpha}{2}} \frac{|\dot{J}(s)|}{s^2}\left\| r^\alpha(r^2+z^2)\begin{pmatrix}\frac{z}{\sqrt{s}}\\\frac{r}{\sqrt{s}}\end{pmatrix}e^{\mbox{$-\frac{1}{4s}(r^2+z^2)$}}\right\|_{\LL{1}} ds
\\& \qquad 
+ \int_{t/2}^tC(t-s)^{-\frac{1}{2}}\uus{}\wwsp{}{q}\,ds 
\\& \qquad 
+ \int_{t/2}^t(1+s)^{-1}s^{-\frac{1}{2}}\,ds \left\|G_2\left(\frac{\cdot}{\sqrt{t}}\right)\right\|_{\LL{q}} t^{-\frac{5}{2}}.
\end{align*}
Using again estimate \eqref{majorationJ'} and then finally \eqref{uLinftyr2}, \eqref{lemmamajorationsr2w}, lemma \ref{lemmamajorationr1alpha+ralphaz} and lemma \ref{lemmamajorationr2alpha+ralphaz2}, one can state that
\begin{align*}
\|\widetilde{\w}(t)\|_{\LL{q}} 
& 
\leqslant C t^{\frac{1}{q}-\frac{5+\alpha}{2}} + C\left(\frac{t}{2}\right)^{\frac{1}{q}-\frac{5+\alpha}{2}} \int_0^{t/2}(1+s)^{\frac{\alpha}{2}-1}s^{-\frac{1}{2}}\,ds
\\& \qquad 
+ C\left(\frac{t}{2}\right)^{\frac{1}{q}-\frac{5+\alpha}{2}} \int_0^{t/2} \frac{(1+s)^{-1}s^{-\frac{1}{2}}}{s^2}s^{\frac{4+\alpha}{2}}\,ds
\\& \qquad 
+ C\int_{t/2}^t(t-s)^{-\frac{1}{2}}s^{-\frac{2+\alpha}{2}}s^{\frac{1}{q}-2}\,ds 
\\& \qquad 
+ C\int_{t/2}^ts^{-\frac{2+\alpha}{2}}\,ds\ t^{\frac{1}{q}}t^{-\frac{5}{2}}
\\&
\leqslant C\,t^{\frac{1}{q}-\frac{5}{2}-\frac{\alpha}{2}}
\end{align*}
for any positive time $t$, where the last constant depends on the initial data $\wo$ and on the quantity $\frac{1}{\alpha}$.
\end{proof}

\section{Additional Results and concluding remarks}

\subsection{The First Resonant term}

In the previous section we have shown that the first terms in the vorticity's asymptotic expansion decrease in $L^p$ as negative powers of $t$. This had already been highlighted in \cite{Carpio94,GalWayR3}, or in the case of the velocity by the works of Wiegner \cite{Wiegner87}, Carpio \cite{Carpio96}, Miyakawa and Schonbek \cite{Schonbek85,Schonbek86,
FujigakiMiyakawa2001,MiyakawaSchonbek2001} and more recently Brandolese \cite{Brandolese2004,Brandolese2003} for instance. However, when addressing the higher order asymptotics, Gallay and Wayne observe in \cite{GalWayR2,GalWayR3} what they call resonances: interactions between the eigenvalues of the linear operator $\Lambda = \Delta + \frac{1}{2}x\cdot\nabla+1$, which is the one that appears when they consider the vorticity equation \eqref{vor} in the self-similar variables $(\ln(1+t),\frac{x}{\sqrt{1+t}})$. In particular, in the two-dimensional case, they show explicitly the presence of a $\ln(t)$ factor in the third-order asymptotic term. This is also something we observe. When $(r,z)\mapsto (r^4+r^2z^2+rz^2)\wo(r,z)$ is in $\LL{1}$ appears the first tangible manifestation of the nonlinearity, namely a bound that does not decreases merely as a negative power of $t$. This is the reason why, in general, proposition \ref{propwr3+alpha} does not hold for $\alpha=1$ and has to be replaced by the following result.

\begin{proposition}\label{propwr4}
Take $\w$ the solution of \eqref{vor} with initial condition $\wo\in\LL{1}$.\\
If $(r,z)\mapsto (r^4+r^2z^2+rz^2)\wo(r,z)$ is in $\LL{1}$ then there exists a positive constant $C_{\wo}$ such that for every $q\in[1,+\infty]$ and all $t>0$ 
\begin{equation}
\|\widetilde{\w}(t)\|_{\LL{q}} \leqslant C_{\wo}(1+\ln(1+t))\,t^{\frac{1}{q}-3}
\end{equation}
where
$\widetilde{\w}$ is given by \eqref{defwtilde}.
\end{proposition}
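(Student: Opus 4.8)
The plan is to treat proposition \ref{propwr4} as the endpoint $\alpha=1$ of proposition \ref{propwr3+alpha}, reusing its architecture and isolating precisely the two pieces that cease to converge at that endpoint. First I would record what the hypothesis supplies. Since $\woo{(r^4+r^2z^2+rz^2)}<\infty$ forces both $\woo{(r^3+rz^2)}<\infty$ and $\woo{r^2(1+|z|)}<\infty$ (splitting each integration at $1$), lemma \ref{lemmamajorationr2alpha+ralphaz2} may be invoked at its admissible endpoint $\alpha=1$ to give $\wwt{r(r^2+z^2)}\leqslant C_{\wo}(1+t)^{1/2}$, and lemma \ref{lemmamajorationsr2}, the velocity bound \eqref{uLinftyr2}, the vorticity decay \eqref{lemmamajorationsr2w} and the derivative estimate \eqref{majorationJ'} remain available exactly as in the proof of proposition \ref{propwr3+alpha}.

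Starting from the integral identity \eqref{duhamelomegatilde}, I would split the time integral at $t/2$. On the far part $[0,t/2]$, relation \eqref{S2divuw+J'G2} lets me replace $S$ by $S_2$ on the combined integrand $u(s)\w(s)+\tfrac{\dot J(s)}{32\sqrt{\pi}\,s^2}(\cdots)$, its $G_2$-piece rewritten as a divergence through \eqref{G2isadiv}; on the near part $[t/2,t]$ I keep $S$ and pull $G_2$ out of the semigroup through \eqref{rkproptyG2}. This produces the same five contributions as for proposition \ref{propwr3+alpha}. The term $\|S_2(t)\wo\|_{\LL{q}}$ is handled by \eqref{propS2r} with $\beta=2$, $p=1$ (its constant $(\tfrac1p+\tfrac{\beta+1}{2}-2)^{-1}$ stays finite there), yielding $C_{\wo}\,t^{\frac1q-3}$. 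The two near-diagonal terms over $[t/2,t]$ --- the one carrying $\dii(u(s)\w(s))$, bounded by \eqref{47}, and the one equal to $\|G_2\|_{\LL{q}}\,t^{\frac1q-5/2}\int_{t/2}^t|\dot J(s)|\,ds$ after \eqref{rkproptyG2} --- both give a clean $C_{\wo}\,t^{\frac1q-3}$ with no logarithm, using $\uus\leqslant C(1+s)^{-1}s^{-1/2}$, \eqref{lemmamajorationsr2w} and \eqref{majorationJ'}. The logarithm is produced on $[0,t/2]$ by \eqref{propS2divr} with $\beta=1$, $p=1$: after pulling out $(t-s)^{\frac1q-3}\leqslant(t/2)^{\frac1q-3}$ and splitting the moment norm by the triangle inequality, the $u\w$ piece is dominated by $\int_0^{t/2}\uus\,\wws{r(r^2+z^2)}\,ds\leqslant C\int_0^{t/2}(1+s)^{-1/2}s^{-1/2}\,ds$, and the $G_2$ piece by $\int_0^{t/2}|\dot J(s)|\,s^{1/2}\,ds\leqslant C\int_0^{t/2}(1+s)^{-1}\,ds$ once the scaling identity showing that the relevant $\LL{1}$-moment of the Gaussian vector field in \eqref{G2isadiv} equals $C\,s^{5/2}$ is used; each of these is $\leqslant C(1+\ln(1+t))$, exactly the divergence that is absent when $\alpha<1$.

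Summing the five bounds yields $\|\widetilde{\w}(t)\|_{\LL{q}}\leqslant C_{\wo}(1+\ln(1+t))\,t^{\frac1q-3}$. The main difficulty is not any isolated estimate but the endpoint bookkeeping: one must check that exactly the two far-field integrands degenerate logarithmically while every other contribution keeps the pure power rate, that $\alpha=1$ remains admissible both in lemma \ref{lemmamajorationr2alpha+ralphaz2} and in the denominators of proposition \ref{propS2} (which stay strictly positive at $\beta=2$ and $\beta=1$ when $p=1$), and that the two borderline integrands, each behaving like $s^{-1/2}$ near the origin, are integrable at $s=0$ so that starting the integration at $0$ is legitimate for every $t>0$.
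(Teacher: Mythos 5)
Your proposal is correct and follows exactly the paper's route: the paper proves this proposition by rerunning the proof of proposition \ref{propwr3+alpha} at the endpoint $\alpha=1$ and observing that the two far-field integrals $\int_0^{t/2}(1+s)^{-1/2}s^{-1/2}\,ds$ and $\int_0^{t/2}(1+s)^{-1}\,ds$ are the only contributions that degenerate, growing like $\ln(1+t)$, while all other terms keep the rate $t^{\frac{1}{q}-3}$. Your endpoint checks (admissibility of $\alpha=1$ in lemma \ref{lemmamajorationr2alpha+ralphaz2}, finiteness of the constants in proposition \ref{propS2} at $\beta=2$ and $\beta=1$ with $p=1$, and integrability at $s=0$) are all consistent with what the paper implicitly relies on.
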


\begin{proof}
The proof is identical to the one of proposition \ref{propwr3+alpha}, with the only difference that having $\alpha=1$ in the last calculation leads to consider the integrals $\int_0^{t/2}(1+s)^{-\frac{1}{2}}s^{-\frac{1}{2}}ds$ and $\int_0^{t/2}(1+s)^{-1}ds$ which grow towards infinity like $\ln(1+t)$.
\end{proof}

This article ends before the study of the third-order asymptotic term, which is the first resonant term in the long-time expansion of the three-dimensional fluid.

\subsection{Decay rates for the velocity}

To deduce theorem \ref{theoremintrou} from what has been seen in the previous section, as well as some decay rates for the velocity, one has to invoke estimate \eqref{28} and the fact that the velocity fields corresponding to the vorticities $(t,r,z)\mapsto I_0G_1(\frac{r}{\sqrt{t}},\frac{z}{\sqrt{t}})t^{-2}$ and $(t,r,z)\mapsto J(t)G_2(\frac{r}{\sqrt{t}},\frac{z}{\sqrt{t}})t^{-5/2}$ are respectively $(t,r,z)\mapsto I_0u^{G_1}(\frac{r}{\sqrt{t}},\frac{z}{\sqrt{t}})t^{-3/2}$ and $(t,r,z)\mapsto J(t)u^{G_2}(\frac{r}{\sqrt{t}},\frac{z}{\sqrt{t}})t^{-2}$, where $u^{G_1}$ and $u^{G_2}$ are defined in \eqref{defuG1}.
Propositions \ref{propmajorationsralpha}, \ref{propwr2+alpha}, \ref{propwr3+alpha} and \ref{propwr4} then lead to the following result, where $I_0$ and $J(t)$ denote respectively $\int_{\Omega}r^2\rot(u_0(r,z))drdz$ and $\int_{\Omega}r^2z\,\rot(u(t,r,z))drdz$.

\begin{corollary}\label{corollaryuq}
Take $u$ the solution of \eqref{NSaxy} with initial data $u_0$ such that $\rot(u_0)\cdot e_\theta = \wo\in\LL{1}$. Take $\beta\geqslant0$.
\begin{itemize}
\item[(i)] If $\beta\in\ ]\frac{1}{2},2]$ and $(r,z)\mapsto r^\beta\wo(r,z)$ is in $\LL{1}$ then for every $q\in[2,+\infty]$ there exists a positive constant $C$ such that for all time $t>0$
\begin{equation}
\left\|u(t)\right\|_{L^q(\RR^3)} \leqslant C\,t^{\frac{3}{2q} - \frac{1}{2} - \frac{\beta}{2}}.
\end{equation}
\item[(ii)] If $\beta\in\ ]2,3]$ and $(r,z)\mapsto (r^\beta+r^{\beta-1}|z|+r^{\beta-2}|z|)\,\wo(r,z)$ is in $\LL{1}$ then for every $q\in[2,+\infty]$ there exists a positive constant $C$ such that for all time $t>0$
\begin{equation}
\left\|u(t) - I_0u^{G_1}\left(\frac{\cdot}{\sqrt{t}}\right)t^{-\frac{3}{2}}\right\|_{L^q(\RR^3)} \leqslant C\,t^{\frac{3}{2q} - \frac{1}{2} - \frac{\beta}{2}}.
\end{equation}
\item[(iii)] If $\beta\in\ ]3,4[$ and $(r,z)\mapsto (r^\beta+r^{\beta-2}z^2+r^{\beta-3}z^2)\,\wo(r,z)$ is in $\LL{1}$ then for every $q\in[2,+\infty]$ there exists a positive constant $C$ such that for all time $t>0$
\begin{equation}
\left\|u(t) - I_0u^{G_1}\left(\frac{\cdot}{\sqrt{t}}\right)t^{-\frac{3}{2}} - J(t)u^{G_2}\left(\frac{\cdot}{\sqrt{t}}\right)t^{-2}\right\|_{L^q(\RR^3)} \leqslant C\,t^{\frac{3}{2q} - \frac{1}{2} - \frac{\beta}{2}}.
\end{equation}
\item[(iv)] If $(r,z)\mapsto (r^4+r^2z^2+rz^2)\,\wo(r,z)$ is in $\LL{1}$ then for every $q\in[2,+\infty]$ there exists a positive constant $C$ such that for all time $t>0$
\begin{equation}
\left\|u(t) - I_0u^{G_1}\left(\frac{\cdot}{\sqrt{t}}\right)t^{-\frac{3}{2}} - J(t)u^{G_2}\left(\frac{\cdot}{\sqrt{t}}\right)t^{-2}\right\|_{L^q(\RR^3)} \leqslant C\ln(1+t)\,t^{\frac{3}{2q} - \frac{5}{2}}.
\end{equation}
\end{itemize}
\end{corollary}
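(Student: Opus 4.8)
The plan is to transfer each vorticity estimate of the previous section to the velocity through the axisymmetric Biot--Savart law, using that for an axisymmetric field $f$ one has $\|f\|_{L^q(\RR^3)} = (2\pi)^{1/q}\|r^{1/q}f\|_{\LL{q}}$, so that an $L^q(\RR^3)$ bound on $u$ is nothing but the weighted bound $\|r^{1/q}u\|_{\LL{q}}$. The second ingredient is the observation recorded just before the statement: the velocity attached to $I_0 G_1(\cdot/\sqrt t)t^{-2}$ is $I_0 u^{G_1}(\cdot/\sqrt t)t^{-3/2}$ and the one attached to $J(t)G_2(\cdot/\sqrt t)t^{-5/2}$ is $J(t)u^{G_2}(\cdot/\sqrt t)t^{-2}$. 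Consequently, in each item the left-hand side is exactly the velocity produced by the vorticity \emph{remainder} already controlled in propositions \ref{propmajorationsralpha}, \ref{propwr2+alpha}, \ref{propwr3+alpha} and \ref{propwr4}. Writing $v$ for that remainder (respectively $\w$, then $\w-I_0G_1(\cdot/\sqrt t)t^{-2}$, then $\widetilde\w$) and $w$ for its velocity, the task reduces to estimating $\|r^{1/q}w\|_{\LL{q}}$ from the decay of $v$.

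For $q\in[2,+\infty[$ I would apply estimate \eqref{28} with output weight $1/q$ and an input weight $b$ chosen in $[1/q,2]$, subject to the index relation $\frac1p = \frac1q + \frac{1+1/q-b}{2}$. The admissibility constraints $p>1$ and $p<q$ amount to $\frac3q-1<b<1+\frac1q$, an interval that is nonempty for every $q\geqslant 2$ and that, at the endpoint $q=2$, forces $b>\tfrac12$ --- which is precisely why item (i) demands $\beta>\tfrac12$ and why one cannot take the zero-gap choice $b=1/q$ there. Since $\frac{3}{2q}=\frac1p+\frac{b-1}{2}$, the target exponent satisfies $\frac{3}{2q}-\frac12-\frac\beta2=\frac1p+\frac b2-1-\frac\beta2$, so \eqref{28} reduces each item to the \emph{weighted} remainder estimate
\[
\|r^{b}\,v(t)\|_{\LL{p}} \leqslant C\,t^{\frac1p+\frac b2-1-\frac\beta2}
\]
(with an additional $\ln(1+t)$ factor in item (iv)).

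The main obstacle is that, for finite $q$, the output weight $1/q>0$ forces $b\geqslant 1/q>0$, whereas propositions \ref{propwr2+alpha}, \ref{propwr3+alpha} and \ref{propwr4} only provide the \emph{unweighted} ($b=0$) version of the displayed bound; and interpolating the sharp unweighted bound against a non-sharp weighted one does not recover the sharp weighted rate. For item (i) this is harmless, because proposition \ref{propmajorationsralpha} supplies \emph{both} the sharp unweighted bound and the sharp $r^\beta$-weighted bound, and interpolating these two directly yields $\|r^bv\|_{\LL{p}}\leqslant C\,t^{1/p-1-\beta/2+b/2}$ for every $b\in[0,\beta]$. For items (ii)--(iv) the clean route is to re-run the proofs of propositions \ref{propwr2+alpha}, \ref{propwr3+alpha} and \ref{propwr4} while carrying an output weight $r^{b}$: the semigroup estimates they invoke --- \eqref{propS1r}, \eqref{propS1divr}, \eqref{propS2r}, \eqref{propS2divr}, \eqref{47} and the weighted bounds \eqref{propSr2alpha} to \eqref{propSdivralphaz2} --- are already written with a free output-weight exponent, and raising it from $0$ to $b$ merely adds $b/2$ to each time exponent while leaving the $t_0$-splitting arguments untouched. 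This produces exactly the weighted remainder estimate above, so the difficulty is purely one of bookkeeping rather than of a new mechanism.

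Finally, the endpoint $q=+\infty$ is treated separately via the interpolation inequality \eqref{26} applied to $v$: here $\|w\|_{\LL{\infty}}$ carries the trivial weight, so only the \emph{unweighted} remainder bounds stated in the propositions are needed. With $\sigma=\frac p2\frac{q_2-2}{q_2-p}$ one checks the identity $\frac\sigma p+\frac{1-\sigma}{q_2}=\frac12$, so interpolating $\|v\|_{\LL{s}}\leqslant C\,t^{1/s-1-\beta/2}$ across two exponents reproduces the rate $t^{-1/2-\beta/2}$ (respectively $\ln(1+t)\,t^{-5/2}$ in item (iv)), matching $t^{\frac{3}{2q}-\frac12-\frac\beta2}$ at $q=\infty$. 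Assembling the four items then only requires matching the parameter $\beta$ to the moment order of each proposition: $\beta\in\,]\tfrac12,2]$ with proposition \ref{propmajorationsralpha}, $\beta-2\in\,]0,1]$ with proposition \ref{propwr2+alpha}, $\beta-3\in\,]0,1[$ with proposition \ref{propwr3+alpha}, and proposition \ref{propwr4} for the logarithmic case, and checking in each case that the prescribed initial moments $(r^\beta+r^{\beta-1}|z|+r^{\beta-2}|z|)\wo$, etc., are exactly those hypothesised in the corresponding vorticity proposition.
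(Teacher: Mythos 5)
Your proposal is correct and follows essentially the same route as the paper: both pass through the identity $\|f\|_{L^q(\RR^3)}=(2\pi)^{1/q}\|r^{1/q}f\|_{\LL{q}}$ and estimate \eqref{28} with an input weight $r^{b}$ (the paper's $r^\alpha$) satisfying $\frac{3}{q}-1<b<1+\frac1q$, both obtain the needed weighted remainder bounds by interpolation from proposition \ref{propmajorationsralpha} in case (i) and by re-running the proofs of propositions \ref{propwr2+alpha}--\ref{propwr4} with a radial weight in cases (ii)--(iv), and both treat $q=+\infty$ separately via \eqref{26}. Your explicit derivation of the admissibility window for $b$ and of why item (i) requires $\beta>\frac12$ at $q=2$ is a welcome elaboration of what the paper leaves implicit, but it is not a different argument.
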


\begin{proof}
When $q=+\infty$, one only has to apply inequality \eqref{26} together with the propositions named right above, the $L^\infty$-norms being rigorously the same for $u(t)$ on $(\RR^3,rdrd\theta dz)$ and $(\Omega,drdz)$. 

When $2\leqslant q<\infty$, a straightforward generalisation of those propositions is required. Let us consider $\alpha\in[0,1]$ such that $\alpha\leqslant\beta$ and $\frac{1}{q}\leqslant\alpha<\frac{1}{q}+1$. Estimate \eqref{28} with $\frac{1}{p} = \frac{1}{q} + \frac{1+\frac{1}{q} -\alpha}{2}$ gives that
\begin{equation}\label{28R3LOmega}
\|f(t)\|_{L^q(\RR^3)} = (2\pi)^{\frac{1}{q}}\ \|r^{\frac{1}{q}}f(t)\|_{\LL{q}} \leqslant C \|r^\alpha \rot f(t)\|_{\LL{p}}
\end{equation}
where $f(t)$ is $u(t)$, $u(t) - I_0u^{G_1}(\frac{\cdot}{\sqrt{t}})t^{-3/2}$ or $u(t) - I_0u^{G_1}(\frac{\cdot}{\sqrt{t}})t^{-3/2} - J(t)u^{G_2}(\frac{\cdot}{\sqrt{t}})t^{-2}$ depending on the case. Estimate \eqref{28} requires that $p$ is not chosen equal to 1, which is possible by taking $\alpha > \frac{3}{q}-1$ given that $\beta > \frac{3}{q}-1$ and that $\frac{1}{q}+1 > \frac{3}{q}-1$. 
Finally, the proof is achieved by using that
\begin{itemize}[label=$\circ$] 
\item $\wwtp{r^\alpha}{p} \leqslant C\,t^{\frac{1}{p}-1+\frac{\alpha-\beta}{2}}$ when $\beta\leqslant2$ (by interpolation from proposition \ref{propmajorationsralpha}),
\item $\|r^\alpha( \w(t) - I_0G_1(\frac{\cdot}{\sqrt{t}})t^{-2})\|_{\LL{p}} \leqslant C\,t^{\frac{1}{p}-1+\frac{\alpha-\beta}{2}}$ when $2<\beta\leqslant3$,
\item $\|r^\alpha\widetilde{\w}(t)\|_{\LL{p}} \leqslant C\,t^{\frac{1}{p}-1+\frac{\alpha-\beta}{2}}$ when $3<\beta<4$,
\item and $\|r^\alpha\widetilde{\w}(t)\|_{\LL{p}} \leqslant C\ln(1+t)\,t^{\frac{1}{p}-3+\frac{\alpha}{2}}$ when $\|(1+r^4+r^2z^2+rz^2)\wo\|_{\LL{1}} < \infty$;
\end{itemize}
the demonstrations of the last three inequalities being the same calculations as for propositions \ref{propwr2+alpha} and \ref{propwr3+alpha} as long as $\alpha\in[0,1]$.
\end{proof}

Eventually, let us prove theorem \ref{theoremintrou}.

\begin{proof}[Proof of theorem \ref{theoremintrou}]
Let us keep the same notations as in the previous proof, considering here only the cases when $\beta=2$ and $f(t) = u(t) - I_0u^{G_1}(\frac{\cdot}{\sqrt{t}})t^{-3/2}$ or $\beta=3$ and $f(t) = u(t) - I_0u^{G_1}(\frac{\cdot}{\sqrt{t}})t^{-3/2} - J_\infty u^{G_2}(\frac{\cdot}{\sqrt{t}})t^{-2}$. Propositions \ref{propwr2} and \ref{propwr3} are still true with a radial weight $r^\alpha$ when $0\leqslant\alpha\leqslant2$, meaning that 
\begin{equation}\label{local9}
t^{2-\frac{1}{p}-\frac{\alpha}{2}}\left\|r^\alpha\left( \w(t,r,z) - I_0G_1\left(\frac{\cdot}{\sqrt{t}}\right)t^{-2}\right)\right\|_{\LL{p}} \xrightarrow[t\to+\infty]{} 0
\end{equation}
for all $p\in[1,+\infty]$ if $(1+r^2)\wo$ is in $\LL{1}$ and
\begin{equation}\label{local10}
t^{\frac{5}{2}-\frac{1}{p}-\frac{\alpha}{2}}\left\|r^\alpha\left( \w(t,r,z) - I_0G_1\left(\frac{\cdot}{\sqrt{t}}\right)t^{-2} - J(t)G_2\left(\frac{\cdot}{\sqrt{t}}\right)t^{-\frac{5}{2}}\right)\right\|_{\LL{p}} \xrightarrow[t\to+\infty]{} 0
\end{equation}
for all $p\in[1,+\infty]$ if $(1+r^3+r^2|z|+r|z|)\wo$ is in $\LL{1}$. The demonstrations of \eqref{local9} and \eqref{local10} are similar to the original ones (respectively \eqref{propwr2convergence} and \eqref{propwr3convergence}). Inequality \eqref{28R3LOmega} can thus be applied with $\frac{3}{q}-1<\alpha\leqslant2$, so $q$ can be taken in all $]1,+\infty[$\,. This concludes the proof, remembering that the case $q=+\infty$ is directly included in propositions \ref{propwr2} and \ref{propwr3} and remembering remark \ref{rkJinfty} about the presence of $J_\infty$ in \eqref{theoremintrour3}.
\end{proof}

\subsection{Axisymmetry with swirl}

When the swirl is nonzero, the information of the Navier-Stokes equations is contained in the pair $(u_\theta,\w)$ through the vorticity-stream formulation 
\[
\left\{\begin{array}{r<{\hskip-0.6em}l}
\partial_tu_\theta + u_r\partial_ru_\theta + u_z\partial_zu_\theta
&\displaystyle
= \Delta u_\theta - \frac{1}{r^2}u_\theta - \frac{u_r}{r}u_\theta\\
\partial_t\w + u_r\partial_r\w + u_z\partial_z\w
&\el{1.7em} \displaystyle
= \Delta\w - \frac{1}{r^2}\w + \frac{u_r}{r}\w + \frac{1}{r}\partial_z(u_\theta^2)
\\\displaystyle
-\left(\frac{1}{r}\Delta-\frac{2}{r^2}\partial_r \right) \psi
&\el{1.7em}
= \w 
\end{array}\right.
\]
where $\psi$ is the stream function, $u = -\frac{1}{r}\partial_z\psi\,e_r + u_\theta\,e_\theta + \frac{1}{r}\partial_r\psi\,e_z$ and $\omega = -\partial_zu_\theta\,e_r + \w\,e_\theta + \frac{1}{r}\partial_r(ru_\theta)e_z$, see for example. The tangential vorticity $\w$ is thus linked to the velocity in the axial plane $(u_r,u_z)$, and the swirl $u_\theta$ is linked to the vorticity $(\omega_r,\omega_z)$ in the axial plane.

The first and second order asymptotic terms can still be calculated, for example from the general three-dimensional formulae given by Gallay and Wayne in \cite{GalWayR3}, but the vorticity has to be assumed continuous in time into the weighted space $L^2(\RR^3,(1+|x|)^mdx)^3$ for some $m>\frac{7}{2}$. Let us suppose this is the case. The vorticity's expansion in \cite{GalWayR3} then shows that the first two asymptotics of the tangential vorticity $\w$ are the same as when the swirl is zero:
\[
\w(t,r,z) = \frac{rI_0}{16\sqrt{\pi}}e^{\mbox{$-\frac{r^2+z^2}{4t}$}}t^{-5/2} + \frac{rzJ(t)}{32\sqrt{\pi}} e^{\mbox{$-\frac{r^2+z^2}{4t}$}} t^{-7/2} + \underset{t\to+\infty}{o}\left(t^{-7/4}\right)
\]
in $L^2(\RR^3)$, where $I_0 = \int_\Omega r^2\w(0,r,z)drdz$ and $J(t) = \int_\Omega r^2z\,\w(t,r,z)drdz$. The radial and vertical vorticities, however, present now nontrivial terms which contribute to the second-order asymptotics as
\begin{align*}
&\omega_r(t,r,z) = \frac{rz(K_0+L_0)}{32\sqrt{\pi}} e^{\mbox{$-\frac{r^2+z^2}{4t}$}} t^{-7/2} + \underset{t\to+\infty}{o}\left(t^{-7/4}\right)
\\
&\omega_z(t,r,z) = \frac{(4t-r^2)(K_0+L_0)}{32\sqrt{\pi}} e^{\mbox{$-\frac{r^2+z^2}{4t}$}} t^{-7/2} + \underset{t\to+\infty}{o}\left(t^{-7/4}\right)
\end{align*}
in $L^2(\RR^3)$, where $K_0 = \int_\Omega r^2z\,\omega_r(0,r,z)drdz$ and $L_0 = \int_\Omega r(2-z^2)\,\omega_z(0,r,z)drdz$. 

A natural arising question is whether it would be easy to follow a similar approach to what has been done in the present paper when the swirl is not supposed zero, in order to weaken the hypotheses taken on $\omega$ by \cite{GalWayR3}.

\vspace{0.45cm} 

\section*{Acknowledgment}

Many thanks to Thierry Gallay for his support, his advice, his comments and suggestions.
The author wants to thank as well the reviewer for their detailed feedback and their helpful remarks.

\appendix

\section{Appendix: On The Hypotheses of theorem \ref{theoremintro}}\label{annexecomparaisonThGWay3D}

We come back in this appendix to the comparison between the hypotheses we made for the second point of our theorem \eqref{theoremintror3} and those needed for the second-order asymptotic development from \cite{GalWayR3}. Applying theorem 4.2 of \cite{GalWayR3} in the zero-swirl axisymmetric case requires that the initial vorticity $\omega(0) = \wo e_\theta$ satisfies $\woop{(r^{\frac{1}{2}}+r^{m+\frac{1}{2}}+r^{\frac{1}{2}}|z|^m)}{2} < \infty$  for some $m>\frac{7}{2}$. This implies in particular that $\wwtp{(r^{\frac{1}{2}}+r^{m+\frac{1}{2}}+r^{\frac{1}{2}}|z|^m)}{2} < \infty$ and $\wwtp{(1+r^{m}+|z|^m)}{\infty} < \infty$ for all positive time $t>0$ (see \cite[prop 2.5]{GalWayR3}); we can therefore consider any instant $t_0>0$ as the initial time and see that our assumption (considered at $t_0>0$) is a consequence of that.

What we assume in theorem \ref{theoremintro} is that $\wwtt{(1+r^3+r^2|z|+r|z|)}{t_0}$ is finite. Using Hölder's inequality to write
\[
\int_\Omega |r^2z\,\w(t_0,r,z)|\,drdz \leqslant \left(\int_\Omega \frac{r^3\,dr}{(1+r)^{\frac{8}{7}m}}\frac{z^2\,dz}{(1+|z|)^{\frac{6}{7}m}}\right)^\frac{1}{2} \wwttp{r^{\frac{1}{2}}(1+r)^{\frac{4}{7}m}(1+|z|)^{\frac{3}{7}m}}{t_0}{2},
\]
then Young's inequality to write $(1+r)^{\frac{4}{7}m}(1+|z|)^{\frac{3}{7}m}|\w(t_0)| \leqslant \frac{4}{7}(1+r)^m|\w(t_0)|+\frac{3}{7}(1+|z|)^m|\w(t_0)|$ and using finally \eqref{deltasum} with $\delta = m$, one concludes that $\wwtt{r^2z}{t_0} \leqslant C \wwttp{(r^{\frac{1}{2}}+r^{m+\frac{1}{2}}+r^{\frac{1}{2}}|z|^m)}{t_0}{2}$ where the constant $C$ only depends on $m > \frac{7}{2}$. Similar computations hold for $\wwtt{r^3}{t_0}$ and $\wwtt{rz}{t_0}$, however $\wwtt{}{t_0}$ has to be cut in two parts. Indeed, one has to write using Hölder's inequality that
\[
\int_{[1,+\infty[\times\RR} |\w(t_0,r,z)|\,drdz \leqslant \left(\int_{[1,+\infty[\times\RR} \frac{r^{-1}\,drdz}{r^m(1+|z|)^m}\right)^\frac{1}{2} \wwttp{r^{\frac{1}{2}m+\frac{1}{2}}(1+|z|)^{\frac{1}{2}m}}{t_0}{2}
\]
and
\[
\int_{]0,1]\times\RR} |\w(t_0,r,z)|\,drdz \leqslant \left(\int_{]0,1]\times\RR} \frac{drdz}{(1+|z|)^m}\right) \|(1+|z|)^m\w(t_0)\|_{L^\infty(\Omega)}
\]
to get that $\w(t_0)$ belongs to $\LL{1}$.

Our assumption is thus weaker than the one from \cite{GalWayR3} for any positive time. Let us notice finally that due to the symmetry of our study we do not need every moment of order three of $\wo$ to be finite, while $\woop{(r^{\frac{1}{2}}+r^{m+\frac{1}{2}}+r^{\frac{1}{2}}|z|^m)}{2}$ involves all the moments of order $m$ of $\wo$.

\section{Appendix: A Fixed Point argument}\label{annexeprop4.1}

The goal of this appendix is to show that under the hypothesis of $(r,z)\mapsto(1+r)\wo(r,z)$ belonging to $\LL{1}$, there exists a time $T>0$ such that the solution $t\mapsto\w(t)$ to the integral vorticity equation \eqref{duhamel} is continuous from $[0,T]$ into the space $L^1_1(\Omega) = \{w:\Omega\to\RR \mid \|w\|_{\LL{1}} + \|rw\|_{\LL{1}}<\infty\}$.

The strategy developed is similar to the two-dimensional case, see for example \cite{Benartzi94,Kato94}. We rewrite here, up to the appropriate adaptations, the fixed-point argument that can be found in \cite[prop. 4.1]{Gallay1}.
Let $\wo$ belong to $L^1_1(\Omega)$. Define for any $T>0$ the function space
\[
X_T = \left\{\w\in\CC^0\!\left(]0,T],\LL{4/3}\right) \,\middle|\ \|\w\|_{X_T}<\infty\right\}
\]
equipped with the norm
\[
\|\w\|_{X_T} = \underset{0<t\leqslant T}{\sup}\left(t^{1/4} \wwtp{}{4/3} + t^{1/4}\wwtp{r}{4/3}\right).
\]
Let us see that, given \eqref{46}, there exists some positive constant $C>0$ independent of $T$ such that $\|S(\cdot)\wo\|_{X_T} \leqslant C\woo{(1+r)}$. In particular, 
$t\mapsto S(t)\wo$ belongs to $X_T$ whatever $T$ is. 
Moreover, it also follows from \eqref{46} that $\|S(\cdot)\wo\|_{X_T}\longrightarrow0$ as $T$ tends to 0 for the following reasons. By density, for any $\varepsilon>0$ there exists $\wo^\varepsilon$ such that $\|(1+r)\wo^\varepsilon\|_{\LL{4/3}} <\infty$ and $\|(1+r)(\wo-\wo^\varepsilon)\|_{\LL{1}} <\varepsilon$. Hence, using \eqref{46} twice with $\beta=\alpha=0$ and twice with $\beta=\alpha=1$,
\[\begin{array}{ll}
\underset{T\to0}{\lim}\,\|S(\cdot)\wo\|_{X_T} 
&\kern-0.6em \leqslant \displaystyle 
\underset{T\to0}{\lim}\ \underset{0<t\leqslant T}{\sup} \left(t^{1/4} \|S(t)[\wo-\wo^\varepsilon]\|_{\LL{4/3}} + t^{1/4}\|rS(t)[\wo-\wo^\varepsilon]\|_{\LL{4/3}} 
\right.\\\rule[1.3em]{0pt}{0pt} &\hfill \displaystyle \left.
+ t^{1/4}\|S(t)\wo^\varepsilon\|_{\LL{4/3}} + t^{1/4}\|rS(t)\wo^\varepsilon\|_{\LL{4/3}}\right)
\\\rule[1.7em]{0pt}{0pt} &\kern-0.6em \leqslant \displaystyle 
\underset{T\to0}{\lim}\ \underset{0<t\leqslant T}{\sup} C \left(\|\wo-\wo^\varepsilon\|_{\LL{1}} + \|r(\wo-\wo^\varepsilon)\|_{\LL{1}} \rule[1.1em]{0pt}{0pt}
\right.\\\rule[1.3em]{0pt}{0pt} &\hfill \displaystyle \left. 
+ t^{1/4}\|\wo^\varepsilon\|_{\LL{4/3}} + t^{1/4}\|r\wo^\varepsilon\|_{\LL{4/3}}\right)
\\\rule[1.7em]{0pt}{0pt} &\kern-0.6em \leqslant \displaystyle 
2\varepsilon
\end{array}\]
and so $\lim_{T\to0}\|S(\cdot)\wo\|_{X_T} = 0$. This will be useful latter.

Given $\w$ in $X_T$ and $p\in[1,\frac{4}{3}]$, 
and taking $u$ the velocity field obtained from $\w$ via the axisymmetric Biot-Savart law \eqref{BSaxi}, define the map 
\[
A\w : t \longmapsto \int_0^tS(t-s)\dii(u(s)\w(s))\,ds
\]
which is continuous from $]0,T]$ into the space $\{w:\Omega\to\RR\mid\|w\|_{\LL{p}} + \|rw\|_{\LL{p}}<\infty\}$ 
. Using \eqref{47} with $\beta=\alpha=0$, Hölder's inequality and then \eqref{25}, see that
\begin{equation}\label{54}
\begin{array}{ll}
t^{1-\frac{1}{p}}\|A\w(t)\|_{\LL{p}}
\kern-0.6em & \displaystyle
\leqslant t^{1-\frac{1}{p}}\int_0^tC(t-s)^{\frac{1}{p}-\frac{3}{2}}\|u(s)\w(s)\|_{\LL{1}}\,ds
\\ \el{2.2em}&\displaystyle
\leqslant Ct^{1-\frac{1}{p}}\int_0^t(t-s)^{\frac{1}{p}-\frac{3}{2}}\|u(s)\|_{\LL{4}}\wwsp{}{4/3}\,ds
\\ \el{2.2em}&\displaystyle
\leqslant Ct^{1-\frac{1}{p}}\int_0^t(t-s)^{\frac{1}{p}-\frac{3}{2}}\wwsp{}{4/3}^2\,ds
\\ \el{2.2em}&\displaystyle
\leqslant Ct^{1-\frac{1}{p}}\int_0^t(t-s)^{\frac{1}{p}-\frac{3}{2}}s^{-\frac{1}{2}}\|\w\|_{X_T}^2\,ds
\\ \el{1.7em}&\displaystyle
\leqslant C\|\w\|_{X_T}^2
\end{array}
\end{equation}
for $t$ in $]0,T]$. The same computation gives that $t^{1-\frac{1}{p}}\|rA\w(t)\|_{\LL{p}} \leqslant C\|\w\|_{X_T}^2$, using \eqref{47} with $\beta=\alpha=1$. This entails, by choosing $p=\frac{4}{3}$, that $A\w$ belongs to $X_T$ and that $\|A\w\|_{X_T} \leqslant C_1\|\w\|_{X_T}^2$ for some positive constant $C_1$. More generally, the Lipschitz estimate
\begin{equation}\label{55}
\|A\w - A\w'\|_{X_T} \leqslant C_1 (\|\w\|_{X_T} + \|\w'\|_{X_T}) \|\w-\w'\|_{X_T}
\end{equation}
holds for every $\w,\w'$ in $X_T$, stemming again from computations similar to \eqref{54}.

Consider now the operator $\widetilde{A} : X_T\to X_T : \w \mapsto S(\cdot)\wo - A\w$. Fix $R>0$ such that $2C_1R <1$ and denote by $B_R$ the closed ball of radius $R$ centered at the origin in $X_T$. One can now chose $T>0$ small enough for $\|S(\cdot)\wo\|_{X_T} \leqslant \frac{1}{2}R$ to hold, so that $\widetilde{A}$ maps $B_R$ into $B_R$ and is a strict contraction there and thus has a unique fixed point in $B_R$ thanks to the Banach fixed point theorem. By construction, this unique fixed point $\w$ is the unique solution of \eqref{duhamel}, which we already know. 

To achieve the proof, it only remains to show that $t\mapsto A\w(t)$ is continuous at 0 for the norm $\|w\|_{L^1_1(\Omega)} = \|w\|_{\LL{1}} + \|rw\|_{\LL{1}}$. 
Indeed, $(S(t))_{t\geqslant0}$ is a strongly continuous semigroup on $L^1_1(\Omega)$ and we already said that $A\w$ is continuous from $]0,T]$ into $L^1_1(\Omega)$. Let us note that $\|\w\|_{X_T} \longrightarrow 0$ as $T$ goes to zero, since one can take $T>0$ as small as desired and, from the moment that $T$ is little enough, 
the Banach fixed point theorem applies in the ball $B_R$ of radius $R=2\|S(\cdot)\wo\|_{X_T}$ which tends to zero as seen above. Consequently, remembering that $t^{1-\frac{1}{p}}\|(1+r)A\w(t)\|_{\LL{p}} \leqslant C\|\w\|_{X_T}^2$ for all $t\in\ ]0,T]$ and taking $p=1$ implies that $\|A\w(t)\|_{L^1_1(\Omega)}\longrightarrow0$ when $t$ goes to zero. 
Therefore $\w = S(\cdot)\wo - A\w$ is continuous from $[0,T]$ into $L^1_1(\Omega)$.

\refstepcounter{theorem}
\paragraph{Remark \thetheorem.}\label{rkC1}
These arguments can be extended to show that $\w$ is also differentiable from $]0,T]$ into $L^1_1(\Omega)$. This results from the analyticity of the semigroup $(S(t))_{t\geqslant0}$ and the fact that $A$ defined above is a locally Lipschitz operator (see \eqref{55}), as developed for example in \cite[§3.3]{DanHenry}. The same way, when saying in the proof of lemma \ref{lemmamajorationsr2} that $\w\in\CC^0([0,+\infty[,L^1_2(\Omega))$ from an initial data $\wo\in L^1_2(\Omega) = L^1(\Omega,(1+r^2)drdz)$, we can show that $\w\in\CC^1(]0,+\infty[,L^1_2(\Omega))$. The same is true as well on $L^1(\Omega,(1+r^2+r^2|z|)drdz)$.

\section{Appendix: A Grönwall lemma}\label{annexeGronwallameliore}

This appendix tackles the boundedness of a nonnegative function $f$ satisfying 
\begin{equation}\label{gronwallameliorehypothesis}
\forall t\in[0,T[ \qquad f(t) \leqslant C + C\int_0^t(t-s)^{\beta-1}(1+s)^{-\gamma}f(s)\,ds
\end{equation}
where $0<\beta<\gamma$ and $0<T\leqslant+\infty$. In particular, satisfying this inequality for all $t\in[0,T=+\infty[$ ensures that $f$ belongs to $L^\infty([0,+\infty[,[0,+\infty[)$.

The hypothesis made here \eqref{gronwallameliorehypothesis} differs from the usual sufficient condition asked for Grönwall's inequality by the presence of the factor $(t-s)^{\beta-1}$ in the integrand. This hypothesis is partially covered in the Dan Henry's book \cite{DanHenry}, but lemmas 7.1.1 and 7.1.2 from \cite{DanHenry} do not not rigorously include the cases we need here.

\begin{lemma}\label{lemmaGronwallbis-gamma}
Consider $f$ in $L^\infty_{loc}([0,T[,[0,+\infty[)$ where $0<T\leqslant+\infty$. If there are $a\geqslant0$, $b\geqslant0$ and $\gamma>\beta>0$ such that
\begin{equation}
\forall t \in[0,T[\qquad f(t) \leqslant a + b\int_0^t (t-s)^{\beta-1}(1+s)^{-\gamma}f(s)\,ds
\end{equation}
with $\beta\leqslant1$, then there exists a positive constant $C_{b,\beta,\gamma}$ such that $\|f\|_{L^\infty([0,T[)} \leqslant a\,C_{b,\beta,\gamma}$.
\end{lemma}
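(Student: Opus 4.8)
The plan is to combine a classical weakly-singular Grönwall estimate on a fixed finite initial interval with a smallness argument on the remaining infinite interval, the latter exploiting the hypothesis $\gamma>\beta$ to control the tail of the kernel. Throughout I write $\mathcal{K}$ for the monotone operator $(\mathcal{K}g)(t)=b\int_0^t(t-s)^{\beta-1}(1+s)^{-\gamma}g(s)\,ds$, so that the hypothesis reads $f\le a+\mathcal{K}f$ pointwise, and I abbreviate $\mathcal{L}$ for the same operator with the weight $(1+s)^{-\gamma}$ replaced by $1$; since $0\le(1+s)^{-\gamma}\le1$ one has $\mathcal{K}g\le\mathcal{L}g$ for every nonnegative $g$, and both operators are order-preserving.

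First I would dispose of a finite interval. Fix $t^*>0$ (to be chosen below) and work on $[0,2t^*]\cap[0,T[$. There $f\le a+\mathcal{L}f$, and iterating this monotone inequality $n$ times gives $f\le a\sum_{k=0}^{n-1}\mathcal{L}^k\mathbf{1}+\mathcal{L}^n f$. The iterated fractional integrals are explicit, $\mathcal{L}^k\mathbf{1}(t)=(b\Gamma(\beta))^k t^{k\beta}/\Gamma(k\beta+1)$, so the series converges to the Mittag–Leffler value $E_\beta(b\Gamma(\beta)t^\beta)$, while $\mathcal{L}^nf(t)\le\|f\|_{L^\infty([0,2t^*])}(b\Gamma(\beta))^n(2t^*)^{n\beta}/\Gamma(n\beta+1)\to0$ because $f$ is locally bounded. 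Hence $f(t)\le a\,C_0$ on $[0,2t^*]$ with $C_0:=E_\beta(b\Gamma(\beta)(2t^*)^\beta)$, a constant depending only on $b,\beta,t^*$. I would spell out this iteration rather than merely citing \cite{DanHenry}, so as to keep the argument self-contained.

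The heart of the matter is a tail estimate. I set $\epsilon(t^*)=\sup_{t\ge2t^*}\int_{t^*}^t(t-s)^{\beta-1}(1+s)^{-\gamma}\,ds$ and claim $\epsilon(t^*)\to0$ as $t^*\to\infty$. Substituting $s=t-u$ and splitting the range of $u\in[0,t-t^*]$ at $t/2$ (legitimate since $t\ge2t^*$), on $[0,t/2]$ one bounds $(1+t-u)^{-\gamma}\le(1+t/2)^{-\gamma}$ together with $\int_0^{t/2}u^{\beta-1}\,du=(t/2)^\beta/\beta$, whereas on $[t/2,t-t^*]$ one bounds $u^{\beta-1}\le(t/2)^{\beta-1}$ (this is precisely where $\beta\le1$ enters) and integrates the weight. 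Using $\gamma>\beta$, both pieces are $\lesssim(t^*)^{\beta-\gamma}$ uniformly in $t\ge2t^*$, which tends to $0$. I then fix $t^*$ once and for all, depending only on $b,\beta,\gamma$, so large that $b\,\epsilon(t^*)\le\tfrac12$.

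Finally, for $t\ge2t^*$ I split $\mathcal{K}f(t)$ at $s=t^*$. In the inner part $s\le t^*$ I use $f\le aC_0$, the bound $(t-s)^{\beta-1}\le(t-t^*)^{\beta-1}\le(t^*)^{\beta-1}$ (again $\beta\le1$) and $\int_0^{t^*}(1+s)^{-\gamma}\,ds<\infty$, obtaining a contribution $\le aC_1$ with $C_1$ depending only on $b,\beta,\gamma,t^*$; in the outer part $s\ge t^*$ I factor out $\sup_{[t^*,t]}f$ and invoke the definition of $\epsilon(t^*)$ to get a contribution $\le\tfrac12\sup_{[t^*,t]}f$. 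Writing $\Phi(t)=\sup_{[t^*,t]}f$ and noting $f\le aC_0$ on $[t^*,2t^*]$, the inequality $f(t)\le a(1+C_1)+\tfrac12\Phi(t)$ for $t\ge2t^*$ yields $\Phi(t)\le\max\!\big(aC_0,\,2a(1+C_1)\big)$ for all $t$, uniformly in $T$; combined with the finite-interval bound this gives $\|f\|_{L^\infty([0,T[)}\le a\,C_{b,\beta,\gamma}$ with $C_{b,\beta,\gamma}=\max(C_0,2(1+C_1))$. The main obstacle is the tail estimate of the third paragraph: obtaining the uniform-in-$t$ decay $\epsilon(t^*)\to0$ is exactly the step that requires $\gamma>\beta$, and is what the lemmas of \cite{DanHenry} do not provide.
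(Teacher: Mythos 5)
Your proof is correct and follows essentially the same strategy as the paper's: an iterated (Mittag--Leffler type) bound on an initial interval, followed by the choice of a threshold time, depending only on $b,\beta,\gamma$, beyond which the tail of the kernel has mass at most $\tfrac{1}{2b}$ so that its contribution can be absorbed into the supremum. The only difference is cosmetic: you estimate the tail integral by splitting at $t/2$, whereas the paper bounds $(1+s)^{-\gamma}\leqslant s^{-\beta}t_0^{\beta-\gamma}$ and invokes $\int_0^t(t-s)^{\beta-1}s^{-\beta}\,ds = B(\beta,1-\beta)$.
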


\begin{proof}
Let us first exhibit one dominating function for $f$ that does not depend on $f$ itself. This is lemma 7.1.1 of \cite{DanHenry}, if we use that trivially $(1+s)^{-\gamma}\leqslant1$. Denoting by $A$ the linear operator
\[
Ag : t \longmapsto b\int_0^t (t-s)^{\beta-1}(1+s)^{-\gamma} g(s)\,ds
\]
on $L^{\infty}_{\text{loc}}([0,T[)$, and remembering that $B$ denotes the Euler Beta function, a recurrence gives that on $[0,T[$
\[
\forall n\in\NN \qquad f(t) \leqslant a\sum_{k=0}^{n} c_k t^{k\beta} + A^{n+1}f(t)
\]
where $c_0 = 1, c_{n+1} = c_nbB(\beta,n\beta+1)$ and
\[
\forall n\in\NN^* \qquad A^nf(t) \leqslant b\,d_n\int_0^t (t-s)^{n\beta-1} f(s)\,ds
\]
where $d_1 = 1, d_{n+1} = d_nbB(\beta,n\beta)$. Given that $B(\beta,n\beta+1)$ and $B(\beta,n\beta)$ are both $O(n^{-\beta})$ when $n$ tends to infinity, $A^nf(t)$ tends to zero for every $t$ in $[0,T[$ and $\sum c_k t^k$ has an infinite radius of convergence. Thus
$f(t) \leqslant a \sum_{k\geqslant0}c_kt^{k\beta}$ for all $t$ in $[0,T[$.

Let us now give a uniform bound to $f$ on $[0,T[$ which does not depend on $T$. Set $t_0 = (2bB(\beta,1-\beta))^{\frac{1}{\gamma-\beta}}$ if $\beta < 1$, and $t_0 = (\frac{2b}{\gamma-\beta})^{\frac{1}{\gamma-\beta}}ds$ if $\beta = 1$. Let us see that in the first case
\[
\int_{t_0}^t(t-s)^{\beta-1}(1+s)^{-\gamma}ds \leqslant \int_{t_0}^t(t-s)^{\beta-1}s^{-\beta}t_0^{\beta-\gamma}\,ds \leqslant B(\beta,1-\beta)\,t_0^{\beta-\gamma},
\]
and that
\[
\int_{t_0}^t(t-s)^{\beta-1}(1+s)^{-\gamma}ds \leqslant \int_{t_0}^t(t-s)^{\beta-1}s^{-\beta}t_0^{\beta-\gamma}\,ds \leqslant \frac{1}{\gamma-\beta}t_0^{\beta-\gamma}
\]
if $\beta=1$. Therefore, in both cases, either $T\leqslant t_0$ and obviously $\|f\|_{L^{\infty}([0,T[)} \leqslant a\sum_{k\geqslant0}c_kt_0^{k\beta}$, or $T>t_0$ and 
\begin{align*}
\|f\|_{L^{\infty}([t_0,T[)}
&\leqslant  a + b \int_0^{t_0}(t-s)^{\beta-1}(1+s)^{-\gamma}f(s)\,ds + b \int_{t_0}^t(t-s)^{\beta-1}(1+s)^{-\gamma}f(s)\,ds \\
&\leqslant  a + b \int_0^{t_0} (t_0-s)^{\beta-1}\, \|f\|_{L^{\infty}([0,t_0])} \,ds + \frac{1}{2}\ \|f\|_{L^{\infty}([t_0,T[)}
\end{align*}
so
\[
\|f\|_{L^{\infty}([t_0,T[)} \leqslant 2a + 2b\,\frac{1}{\beta}t_0^\beta\,\|f\|_{L^{\infty}([0,t_0])} \leqslant 2a + 2ab\,\frac{1}{\beta}t_0^\beta\,\sum_{k\geqslant0}c_kt_0^{k\beta}
\]
and $\|f\|_{L^{\infty}([0,T[)} \leqslant \|f\|_{L^{\infty}([0,t_0])} + \|f\|_{L^{\infty}([t_0,T[)} \leqslant aC_{b,\beta,\gamma}$.
\end{proof}

The crucial point of this lemma is that the constant $C_{b,\beta,\gamma}$ can be chosen independently of $T$. The restriction $\beta \leqslant 1$ can be easily removed, but the proof would be longer.

\bibliographystyle{plain}
\bibliography{C:/Users/quent/Documents/Softwares/Latex/_mon_test_de_LaTex/BibQV}

\end{document}